\newcommand{\real}{\mathbb{R}}
\def\a{\alpha}
\def\e{\varepsilon}        % Also, \varepsilon
\newcommand{\dx}{\,{\rm d}x}
\newcommand{\RR}{\mathbb{R}}
\def\qed{\,\unskip\kern 6pt \penalty 500
\raise -2pt\hbox{\vrule \vbox to8pt{\hrule width 6pt
\vfill\hrule}\vrule}\par}
\definecolor{darkblue}{rgb}{0.05, .05, .65}
\definecolor{darkgreen}{rgb}{0.1, .65, .1}
\definecolor{darkred}{rgb}{0.8,0,0}
\newcommand{\beqn}{\begin{equation}}
\newcommand{\eeqn}{\end{equation}}
\newcommand{\bear}{\begin{eqnarray}}
\newcommand{\eear}{\end{eqnarray}}
\newcommand{\bean}{\begin{eqnarray*}}
\newcommand{\eean}{\end{eqnarray*}}
\begin{document}

\title{Rotationally symmetric $p$-harmonic maps from $D^2$ to $S^2$\thanks{ The authors have been partially supported by the Spanish MEC and
FEDER, project MTM2008-03176.}
}
%\subtitle{Do you have a subtitle?\\ If so, write it here}

\titlerunning{Rotationally symmetric $p$-harmonic maps from $D^2$ to $S^2$}        % if too long for running head

\author{Razvan Gabriel Iagar         \and
        Salvador Moll %etc.
}

\authorrunning{Razvan Gabriel Iagar         \and
        Salvador Moll} % if too long for running head

\institute{R. G. Iagar \and
           S. Moll \at
              Departamento de Análisis
Matemático, Univ. de Valencia, Dr. Moliner 50, 46100, Burjassot,
Spain.  \email{razvan.iagar@uv.es, \ j.salvador.moll@uv.es} \and R.
G. Iagar \at Institute of Mathematics of the Romanian Academy, P.O.
Box 1-764, RO-014700, Bucharest, Romania.
                 }

\date{}
% The correct date

\maketitle

\begin{abstract}
We consider rotationally symmetric $p$-harmonic maps from the unit
disk $D^2\subset\real^2$ to the unit sphere $S^2\subset\real^3$,
subject to Dirichlet boundary conditions and with $1<p<\infty$. We
show that the associated energy functional admits a unique minimizer
which is of class $C^{\infty}$ in the interior and $C^1$ up to the
boundary. We also show that there exist infinitely many global
solutions to the associated Euler-Lagrange equation and we
completely characterize them. \keywords{$p$-harmonic maps,
$p$-laplacian, image processing, liquid crystals, ferromagnetism.}
\subclass{58E20, 34B15, 49J05, 49N60, 76A15, 82D40, 68U10.}
\end{abstract}
\tableofcontents
\section{Introduction and main results}

Given a domain $\Omega\subset\real^N$, a real number $p>1$, a
smoothly embedded compact submanifold without boundary $M$ of
$\real^{N+1}$, and a mapping $u:\Omega\mapsto M$, we consider the
energy functional
\begin{equation}\label{pEnergy}
E_{p}(u):=\frac{1}{p}\int_{\Omega}|\nabla u|^p\,\dx.
\end{equation}
The $p$-harmonic flow associated to $E_p$ is given by
\begin{equation}\label{pHarm}
u_t=-\pi_{u}(-\Delta_{p}u), \quad \Delta_{p}u=\hbox{div}(|\nabla
u|^{p-2}\nabla u),
\end{equation}
where, for a generic point $v\in M$, we denote by $\pi_{v}$ the
orthogonal projection of $\real^{N+1}$ onto the tangent space
$T_{v}M$ to $M$ at $v$. Notice that, because of this projection, any
solution to Eq. \eqref{pHarm} is constrained to remain in $M$. We
next particularize $M$ to be the unit sphere $S^N$ of $\real^{N+1}$. Then, \eqref{pHarm} can be written in an explicit form
\begin{equation}\label{pHarmSphere}
u_t=\hbox{div}(|\nabla u|^{p-2}\nabla u)+u|\nabla u|^p.
\end{equation}
This is a system of partial differential equations that, besides its pure
mathematical interest as involving a competition between
quasi-linear diffusion and reaction gradient terms, has been
proposed in various contexts, such as ferromagnetism \cite{dSPG},
theory of liquid crystals \cite{vdH}, multigrain problems \cite{KWC}
and image processing \cite{SapiroBook}. In this context it has been used as a prototype of quite complicated problems
containing nonlinear reaction-diffusion systems of partial differential
equations for the evolution of director fields.

From the mathematical point of view, the Dirichlet problem for Eq.
\eqref{pHarmSphere} with boundary condition $u(t,x)=u_0(x)$ for
$(t,x)\in\partial Q$, $Q:=(0,\infty)\times\Omega$, has been widely
considered in the last decades. Referenced discussions can be found in
\cite{BPP,BPvdH} for $p=2$,
\cite{chh,Hbook,Misawa} for $p\neq 2$ and
\cite{GK,dPGM,GM} for the special limit case $p=1$.
About the local and global existence of solutions to the
Dirichlet problem for Eq. \eqref{pHarmSphere}, it has been shown in
\cite{CDY} for $p=2$ and, more recently, for $p=1$ in \cite{GK,GM}
that for suitable boundary data, classical solutions exist for short
time and their first derivative at the origin blows up in finite time. In \cite{GM} a
sharp condition on the boundary data for this phenomenon to happen is given.
In all these papers, the case of rotationally symmetric and
stationary boundary conditions is considered.

We devote the present paper to the study of the steady-states
corresponding to the Dirichlet problem for Equation
\eqref{pHarmSphere} when $1<p<\infty$, $M=S^2$, the unit sphere of
$\real^3$, and $\Omega=D^2=\{(x_1,x_2)\in\real^2:
x_1^2+x_2^2\leq1\}$, the unit disk of $\real^2$, and we restrict
ourselves to deal with rotationally symmetric solutions. A
rotationally symmetric solution of \eqref{pHarmSphere} has in this
case the general form
\begin{equation}\label{rotsim}
u(t,x)=\left(\frac{x_1}{r}\sin h(t,r),\frac{x_2}{r}\sin h(t,r),\cos h(t,r)\right),
 r=|x|,  x=(x_1,x_2)\in D^2
\end{equation}
and the Dirichlet boundary condition takes the form
$h(t,1)=h(0,1)=l$. The energy functional $E_{p}$ introduced in
\eqref{pEnergy} becomes
\begin{equation}\label{pEnergyRotSim}
E_{p}(u)=\int_0^1 L_p(r,h,h_r)\,dr,
\end{equation}with a non-coercive Lagrangian given by $$L_p(r,s,\xi)=\left(r^{2/p}h_r^2 + r^{(2-2p)/p}\sin^2\,h\right)^{p/2}.$$

We replace $u$ by its special form \eqref{rotsim} in Eq.
\eqref{pHarmSphere}. By a long but straightforward calculation,
using the formulas given in the proof of Lemma 2.1 in \cite{GK}, we
obtain
\begin{equation*}
\begin{split}
\Delta_{p}u & =  (p-2)|\nabla
u|^{p-4}h_r\left[h_{r}h_{rr}-\frac{\sin^2h}{r^3}+\frac{h_r\sin h\cos
h}{r^2}\right](e^{i\theta}\cos h,-\sin h)\\& +|\nabla
u|^{p-2}\left[e^{i\theta}\left[-\left(\frac{1}{r^2}+h_r^2\right)\sin
h+\left(\frac{h_r}{r}+h_{rr}\right)\cos
h\right],\right.\\&\left.-\left(\frac{h_r}{r}+h_{rr}\right)\sin h-h_r^2\cos
h\right].
\end{split}
\end{equation*}
where we let $x=(x_1,x_2)=re^{i\theta}$. Thus, Eq.
\eqref{pHarmSphere} becomes
\begin{equation}\label{pHarmRotSim}
\begin{split}
h_t&=\left(h_r^2+\frac{sin^2\,h}{r^2}\right)^{(p-4)/2}\Big[(p-1)h_r^2h_{rr}\\ &+(p-3)\left[\frac{h_r^2\sin\,h\cos\,h}{r^2}-\frac{h_r\sin^2\,h}{r^3}\right]\\
&+\left.\frac{h_r^3}{r}+\frac{h_{rr}\sin^2\,h}{r^2}-\frac{\sin^3\,h\cos\,h}{r^4}\right].
\end{split}
\end{equation}
Notice that, in the special cases $p=1$ and $p=2$, the previous
equation simplifies to the equations in \cite[p. 535]{dPGM},
respectively \cite[p. 96]{BPvdH}.

In the present paper we investigate the rotationally symmetric
minimizers of $E_p$ subject to $h(1)=l$, as well as its smooth
critical points, that is, the steady states of Eq.
\eqref{pHarmRotSim}. We thus fill in the gap between the limit cases
$p=1$ (studied in \cite{dPGM}) and $p=2$ (studied in \cite{BPvdH}),
and moreover we extend our study to $2<p<\infty$, giving at the same
time an idea of how these results change with respect to $p$. By the
periodicity of $\sin^2\,h$ and the symmetry with respect to $\pi/2$,
we may assume without loss of generality that $l\in(0,\pi/2]$. We
incorporate the boundary condition by defining
\begin{equation}\label{energy functional}
G_{p,l}(h):=\left\{\begin{array}{ll}E_{p}(h), \quad \hbox{if} \
h(1)=l,\\ \\ +\infty \quad \hbox{otherwise}.\end{array}\right.
\end{equation}
We prove first existence and uniqueness of the minimizer of the energy
functional.
\begin{theorem}\label{th.1}
Let $p>1$ and $l\in(0,\pi/2]$. Then, up to a reflection with
respect to $h=\pi/2$ in the case $l=\pi/2$, there exists a unique
solution $h_{l}$ to \begin{equation}
  \label{P} h_{l}\in {\rm \ argmin }\{G_{p,l}(h) : h \in W^{1,1}(0,1)\cap W^{1,p}_{loc}(0,1)\}\,.
\end{equation}
 Furthermore, $h_{l}\in
C^{\infty}((0,1])\cap C^1([0,1])$, $h_{l}$ is positive and
increasing in $(0,1]$, $h_{l}(0)=0$, $h_{l}(1)=l$ and $h_{l}$ solves
the following ordinary differential equation
\begin{equation}\label{ODE}\begin{split}
(p-1)h'^2h''+(p-3)\left(h'^2\frac{\sin\,h\cos\,h}{r^2}-h'\frac{\sin^2\,h}{r^3}\right)\\ +\frac{h'^3}{r}+\frac{h''\sin^2\,h}{r^2}-\frac{\sin^3\,h\cos\,h}{r^4}=0,
\end{split}
\end{equation}
in a classical sense.
\end{theorem}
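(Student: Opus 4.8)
The plan is to proceed through the standard chain for constrained variational problems: (i) establish existence of a minimizer by the direct method, exploiting the structure of the Lagrangian $L_p$; (ii) establish qualitative properties (monotonicity, sign, boundary values) by comparison/rearrangement arguments; (iii) bootstrap interior regularity from the Euler--Lagrange equation; (iv) derive uniqueness from strict convexity after a suitable change of variables. The subtle point throughout is the \emph{non-coercivity} of $L_p$ at $r=0$: the weight $r^{(2-2p)/p}$ degenerates (for $p>1$) or the weight $r^{2/p}$ vanishes, so the admissible class is $W^{1,1}(0,1)\cap W^{1,p}_{loc}(0,1)$ rather than a single Sobolev space, and one must be careful about the behavior of competitors near the origin.

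For existence, I would take a minimizing sequence $(h_n)$ for $G_{p,l}$. First reduce to competitors with $0\le h_n\le l$: truncating $h$ at $0$ and at $l$ (and using periodicity/reflection symmetry of $\sin^2 h$) does not increase $E_p$, since $\sin^2$ is monotone on $[0,\pi/2]$ and truncation only decreases $|h_r|$ a.e. Boundedness in $L^\infty$ together with the energy bound $\int_0^1 r^{2/p}|h_n'|^2 \le C$ on compact subsets gives, via diagonal extraction, a limit $h_l\in W^{1,p}_{loc}(0,1)$ with $h_n\rightharpoonup h_l$ weakly in $W^{1,p}$ on each $[\delta,1]$. Lower semicontinuity of the convex integrand on each such interval, plus the fact that the full integral is the monotone limit (as $\delta\downarrow 0$) of the integrals over $[\delta,1]$, yields $E_p(h_l)\le \liminf E_p(h_n)$. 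One also checks that the boundary condition $h(1)=l$ passes to the limit (trace continuity on $[\delta,1]$) and that $h_l\in W^{1,1}(0,1)$ with $h_l(0^+)$ existing; arguing that $h_l(0)=0$ uses that any competitor with $h(0^+)\neq 0$ carries extra energy from the $r^{(2-2p)/p}\sin^2 h$ term near $0$ (this is where finiteness of $E_p$ forces the boundary value at the origin). To get $h_l$ increasing, I would compare with its monotone rearrangement or argue directly that replacing $h_l$ by a nondecreasing competitor with the same distribution of values does not increase $E_p$; positivity on $(0,1]$ then follows since $l>0$ and $h_l(0)=0$.

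Regularity and the ODE come next. Away from $r=0$, equation \eqref{ODE} is the Euler--Lagrange equation of \eqref{pEnergyRotSim}, and on any $[\delta,1]$ the Lagrangian is smooth and uniformly convex in $\xi$ on the region where $\sin^2 h$ is bounded below (i.e.\ where $h>0$), so standard elliptic regularity for the $p$-Laplacian-type structure gives $h_l\in C^{1,\alpha}_{loc}$, and then bootstrapping through the smooth equation gives $h_l\in C^\infty((0,1])$. The behavior up to $r=0$ is delicate: one shows $h_l'(r)\to h_l'(0)$ exists by analyzing the ODE near the origin — substituting an ansatz $h\sim cr$ is consistent with \eqref{ODE}, and a fixed-point/ODE-phase-plane argument (of the type already used in \cite{GK,dPGM}) pins down $h_l\in C^1([0,1])$. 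For uniqueness, the functional $E_p$ is not convex in $h$ directly, but after the substitution that linearizes $\sin h$ near the relevant branch (or by using that any two minimizers must both solve \eqref{ODE} with the same boundary data $h(0)=0$, $h(1)=l$ and then invoking a uniqueness theorem for this ODE obtained from its monotone structure), one concludes $h_l$ is unique up to the stated reflection when $l=\pi/2$.

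The main obstacle I expect is \textbf{the origin}: both passing the constraint $h(0)=0$ to the limit in the minimization (because the natural space is only $W^{1,1}$ there) and proving the $C^1$-regularity up to $r=0$ require genuinely controlling solutions of the singular ODE \eqref{ODE} near $r=0$. Everything on $[\delta,1]$ is routine convex-variational and elliptic-regularity theory; the payoff and the difficulty both sit in the boundary layer at the center of the disk, and I would expect the paper to develop a careful ODE analysis (existence of the limit of $h'$, matching the linear behavior $h\sim cr$) to close this gap.
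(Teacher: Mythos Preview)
Your overall architecture matches the paper closely: direct method on $[\delta,1]$ with a diagonal extraction, reduction to nondecreasing competitors with range in $[0,l]$, interior $C^\infty$ regularity from the Euler--Lagrange equation on compact subintervals, and an ODE/phase-plane analysis near $r=0$ to obtain $h_l\in C^1([0,1])$ with $h_l(0)=0$. You also correctly flag the origin as the place where the real work happens. Two points deserve correction.

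First, a minor one: your claim that $h_l(0)=0$ follows because ``any competitor with $h(0^+)\neq 0$ carries extra energy from the $r^{(2-2p)/p}\sin^2 h$ term'' is false for $1<p<2$. In that range $\int_0^1 r^{1-p}\,dr<\infty$, so the constant function $h\equiv l$ has finite $E_p$; energy finiteness alone does not pin down the value at the origin. The paper obtains $h_l(0)=0$ \emph{a posteriori} from the ODE: once one knows the minimizer is positive, nonconstant, and solves \eqref{ODE} on $(0,1]$, a phase-plane analysis (via $f(t)=h(e^{-t})$, $w=\cot f$, and a study of the critical points at infinity of the resulting autonomous system) forces $\lim_{r\to 0}h(r)\in\{0,\pi\}$ for $1<p<2$, hence $h_l(0)=0$ since $h_l\le l\le\pi/2$.

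Second, and more substantially, your uniqueness sketch has a genuine gap. Neither ``strict convexity after a change of variable'' nor a vague ``monotone structure'' of \eqref{ODE} is what the paper uses, and I do not see how either would close. The paper's argument is a specific and somewhat delicate ODE estimate: after passing to $w=\cot f$ and then to $g(w):=w/w'(t(w))$, one derives a first-order equation
\[
g'(w)=\frac{1}{w}\Bigl[g^3-(2-p)g^2+g-F(w,g)\Bigr],
\]
with $F$ explicit, and shows that any two solutions $g_1,g_2$ arising from monotone solutions of \eqref{problem1} satisfy $g_j(w)\to 1$ as $w\to\infty$. Differencing and using the asymptotics of $F$ gives a differential inequality of the form
\[
\frac{(g_1-g_2)'}{g_1-g_2}\ge \frac{2p-2(p-1)-2\varepsilon}{w}=\frac{2(1-\varepsilon)}{w}
\]
for large $w$, forcing $|g_1(w)-g_2(w)|\to\infty$, a contradiction. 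This growth-of-the-difference mechanism is the actual content of the uniqueness proof; you should either reproduce it or find a genuine substitute, since convexity of $E_p$ in $h$ (or in any obvious substitution) does not hold globally for this Lagrangian.
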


We also show that the functional $G_p$ admits an infinite
sequence of smooth critical points which are different from the
minimizer. We distinguish between the cases $1<p<2$ and $p>2$.

\begin{theorem}\label{th.2}
Given $p\in(1,2)$, there exists a global solution $h\in
C^{\infty}((0,\infty))\cap C^1([0,\infty))$ to Eq. \eqref{ODE} in a
classical sense, which satisfies the following properties:

\noindent (a) $h(0)=0$, $\lim\limits_{r\to\infty}h(r)=\pi/2$;

\noindent (b) There exists an increasing sequence of critical points
$r_n$ of $h$ such that $r_n\to\infty$ as $n\to\infty$,
$h(r_{2n})\in(\pi/2,\pi)$ and are local maxima,
$h(r_{2n+1})\in(0,\pi/2)$ and are local minima. Moreover, there
exists some positive integer $n_0$ sufficiently large such that
$|h(r_n)-\pi/2|$ is decreasing for $n\geq n_0$, and
\begin{equation}\label{MaxOsc}
\left|h(r_0)-\frac{\pi}{2}\right|=\max\left\{\left|h(r_n)-\frac{\pi}{2}\right|:n\geq 0\right\};
\end{equation}

\noindent (c) For any $l\in(0,\pi/2)$, the minimizer $h_l$ of
$G_{p}$ satisfies $h_{l}(r)=h(\a r)$ (or $h_{l}(r)=\pi-h(\a r)$ if
$l=\pi/2$), for some suitable $\a>0$;

\noindent (d) Any non-constant finite global solution $\tilde{h}$ to
Eq. \eqref{ODE} has the form $\tilde{h}(r)=k\pi\pm h(\a r)$, for
some $\a>0$ and $k\in \mathbb Z$.

\noindent (e) There are infinitely many global solutions which have
$-\infty$ (respectively $+\infty$) as limit when $r\to 0^+$. They
are either increasing (resp. decreasing) forever and with limit either $+\infty$ (resp. $-\infty$) or
$k\pi$ for some $k\in \mathbb Z$ as $r\to +\infty$ or they increase
(resp. decrease) up to some $r_*\in (0,+\infty)$ and after $r_*$
they oscillate around $(2k+1)\frac{\pi}{2}$ for some $k\in\mathbb Z$
as in (b).
\end{theorem}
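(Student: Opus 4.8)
The plan is to pass to an autonomous planar system and describe its complete phase portrait. Equation \eqref{ODE} is invariant under the scaling $h(r)\mapsto h(\lambda r)$, $\lambda>0$, and under the discrete symmetries $h\mapsto h+k\pi$ and $h\mapsto -h$; the substitution $t=\ln r$, $\phi(t)=h(e^{t})$, $\psi=\dot\phi$ (dots meaning $d/dt$) turns it into the autonomous system
\begin{align*}
\dot\phi &= \psi,\\
\dot\psi &= \frac{(p-2)\psi^{3}-(p-3)\psi^{2}\sin\phi\cos\phi+(p-2)\psi\sin^{2}\phi+\sin^{3}\phi\cos\phi}{(p-1)\psi^{2}+\sin^{2}\phi},
\end{align*}
which is $\pi$-periodic in $\phi$ and whose equilibria are exactly the points $(k\pi/2,0)$. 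Two facts are then immediate. First, linearising at $(\pi/2,0)$ (and its translates) gives the matrix $\left(\begin{smallmatrix}0&1\\-1&p-2\end{smallmatrix}\right)$, with eigenvalues of negative real part and nonzero imaginary part precisely because $1<p<2$; so $(\pi/2,0)$ is an asymptotically stable focus. Second, the function $V=\tfrac12\psi^{2}+\tfrac12\sin^{2}\phi$ satisfies
\[
\dot V=\frac{\psi\,(\psi^{2}+\sin^{2}\phi)\bigl((p-2)\psi+\sin 2\phi\bigr)}{(p-1)\psi^{2}+\sin^{2}\phi},
\]
so $\dot V$ is sign-definite on large regions; $V$ is the main quantitative tool.

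To produce the solution of (a) I would first analyse the degenerate equilibrium $(0,0)$, whose local model is $1$-homogeneous: writing the invariant rays $\psi=m\phi$ one finds the defining polynomial factors as $(m-1)(m^{2}+1)\bigl((p-1)m+1\bigr)$, so the only real invariant directions are the source ray $\psi=\phi$ and the sink ray $\psi=-\phi/(p-1)$ (the latter a sink exactly because $p>1$). Hence there is, uniquely up to the scaling $t\mapsto t+\text{const}$, one orbit leaving $(0,0)$ into the first quadrant tangent to $\psi=\phi$; translating back and using that tangency ($rh'\sim h$) gives a solution $h$ with $h(0)=0$, $h'(0)>0$, increasing near $0$, hence $h\in C^{1}$ at the origin. (Equivalently one may obtain $h$ by a contraction in a weighted space for the $r$-equation, or simply extend the minimizer $h_{l_0}$ of Theorem~\ref{th.1} for a fixed $l_0\in(0,\pi/2)$.) Interior $C^{\infty}$ regularity is a bootstrap, since away from $\{h'=0=\sin h\}$ the equation is smooth and nondegenerate. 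Global existence follows from the estimate $\dot V\le C(V+1)$ (so nothing blows up in finite $t$) together with the fact that a nonconstant orbit cannot reach an equilibrium $(k\pi/2,0)$ at finite $t$.

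The core of the argument is (b). Running the orbit forward: while $\phi<\pi/2$, $\psi>0$ it lies below the curve $\psi=\sin 2\phi/(2-p)$, along which $V$ increases, and if $\phi$ stayed $\le\pi/2$ it would converge monotonically to the focus, which is impossible; so $\phi$ crosses $\pi/2$ transversally at some $r_{1/2}$, with $\psi_{1}:=r_{1/2}h'(r_{1/2})$ controlled ($\psi_{1}\le 1/(2-p)$, from that nullcline analysis). In the cell $\{\pi/2<\phi<\pi,\ \psi>0\}$ every term in the numerator of $\dot\psi$ is negative, so $\psi$ is strictly decreasing (equivalently $h''<0$ there) and $\dot V<0$; a differential inequality of the form $d\psi/d\phi\le-(2-p)$, sharpened near $\psi=0$ by the $\sin^{3}\phi\cos\phi$ term, then forces $\psi$ to reach $0$ at some $\phi_{0}=h(r_{0})\in(\pi/2,\pi)$, a local maximum. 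A symmetric analysis in $\{\pi/2<\phi<\pi,\ \psi<0\}$ and $\{0<\phi<\pi/2,\ \psi<0\}$ gives the next turning point in $(0,\pi/2)$, and iterating produces the alternating sequence $r_{n}\to\infty$ with maxima in $(\pi/2,\pi)$ and minima in $(0,\pi/2)$; the orbit is thereby trapped in a bounded neighbourhood of the focus and, being attracted to it, has $h(r_{n})\to\pi/2$. The eventual monotonicity of $|h(r_{n})-\pi/2|$ for $n\ge n_{0}$ comes from the Hartman--Grobman normal form at the focus (amplitude $\sim e^{(p-2)t/2}$), and \eqref{MaxOsc} from tracking $V$ across successive half-oscillations with the sign rules above. \emph{This confinement-plus-bookkeeping step is the one I expect to be the main obstacle}: turning the qualitative picture into a proof that the \emph{first} turning point lies strictly below $\pi$ and that no subsequent excursion exceeds it requires a careful quantitative handling of the nullclines of $\dot\psi$ and of $\dot V$, uniformly in $p\in(1,2)$.

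The remaining parts are structural. (c): for $l\in(0,\pi/2)$ pick the unique $\alpha\in(0,r_{0})$ with $h(\alpha)=l$; then $r\mapsto h(\alpha r)$ solves \eqref{ODE}, is increasing on $[0,1]$, satisfies $h(0)=0$, $h(1)=l$ and has finite energy, hence by the uniqueness part of Theorem~\ref{th.1} (which identifies the minimizer with the unique such increasing solution, the latter unique because the origin-orbit is unique up to scaling) it equals $h_{l}$; for $l=\pi/2$ use $\pi-h(\alpha r)$ instead. (d): if $\tilde h$ is a nonconstant global solution with a finite limit $\beta$ at $r=0$, the singular terms of \eqref{ODE} (order $r^{-4}$, or $r^{-3}$ when $\cos\beta=0$) force $\tilde h'$, and then $\tilde h$, to be unbounded near $0$ unless $\sin\beta=0$; so $\beta=k\pi$, after a translation the orbit of $\tilde h-k\pi$ must enter $(0,0)$ along $\psi=\phi$, whence $\tilde h(r)=k\pi\pm h(\alpha r)$. (e): on the phase cylinder $\{(\phi\bmod\pi,\psi)\}$ the orbit of $h$ together with that of $-h$, both running from $(0,0)$ into the focus, bound a region $\mathcal{B}$; orbits in $\mathcal{B}$ are exactly the bounded ones of (d), while orbits outside $\mathcal{B}$ wind, i.e.\ $\phi\to\pm\infty$ as $t\to-\infty$, i.e.\ $h\to\pm\infty$ as $r\to0^{+}$. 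For such winding orbits the sink ray $\psi=-\phi/(p-1)$ at the points $(k\pi,0)$ gives those that are monotone with limit $k\pi$; the rest either keep winding with limit $\pm\infty$ or, once $V$ has decreased below the threshold set by $\mathcal{B}$, become trapped near some $(2k+1)\pi/2$ and oscillate there as in (b). Since the winding orbits fill an open set, there are infinitely many of them.
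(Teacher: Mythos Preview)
Your plan follows the same global strategy as the paper --- pass to logarithmic time, study the resulting autonomous planar system, classify equilibria, and use an energy to control oscillations --- but the coordinate choice differs, and that difference matters precisely at the step you flag as the obstacle.

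The paper makes the additional substitution $w=\cot\phi$ before analysing the phase plane. This converts your degenerate equilibria $(k\pi,0)$ into critical points \emph{at infinity} of a rational system, which are then studied on the Poincar\'e sphere: the point corresponding to your ``source ray $\psi=\phi$'' becomes a genuine hyperbolic saddle $P_1$ (one orbit out into the plane), your ``sink ray $\psi=-\phi/(p-1)$'' becomes another saddle $P_2$ (one orbit in from the plane), and the direction $\psi/\phi\to\pm\infty$ becomes an elliptic point $P_3$. With this hyperbolic/elliptic classification in hand, the confinement you worry about --- that the orbit from $(0,0)$ actually spirals into the focus rather than escaping toward $h=\pi$ --- is obtained by a short \emph{topological} argument: since $P_2$ absorbs exactly one orbit and $P_3$ is elliptic, the orbit from $P_1$ cannot go to either without leaving the stable focus isolated or forcing an impossible crossing; hence it must go to the focus. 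No quantitative nullcline estimate is needed. Your direct approach via $d\psi/d\phi\le-(2-p)$ and the bound $\psi_1\le 1/(2-p)$ is shakier than you indicate: the $\dot V$--nullcline $\psi=\sin 2\phi/(2-p)$ passes through $(\pi/2,0)$, so ``the orbit stays below it and crosses $\phi=\pi/2$ with $\psi_1\le 1/(2-p)$'' cannot both hold; you would need a different barrier.

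For \eqref{MaxOsc} and the ordering of extrema, the paper does not rely on tracking an energy across half-oscillations (your $\dot V$ changes sign, as does the paper's $\dot E$). Instead, once the orbit is known to spiral into the focus, the successive intersections with each half of the axis $\{\psi=0\}$ are automatically monotone by non-self-intersection of planar orbits; this gives the full monotonicity of the maxima and of the minima separately, and in particular the extremal at $r_0$. The energy is used only for the eventual monotonicity of $|h(r_n)-\pi/2|$ as $n\to\infty$. This phase-plane ordering argument works verbatim in your $(\phi,\psi)$ coordinates and is simpler than what you sketch.

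One small gap in your (d): when $\beta=\pi/2$ the ``singular term'' heuristic does not exclude it (the $r^{-4}$ term vanishes and the remaining balance is delicate). The correct argument is the one you already have: $(\pi/2,0)$ is a stable focus for $1<p<2$, so no orbit can approach it as $t\to-\infty$, and if $h(0^+)=\pi/2$ with $\psi$ unbounded then $h$ itself would be unbounded. The paper handles this via Proposition~\ref{prop.loc0}, which in your language is exactly the backward-time analysis of the focus and the saddles.

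In short: your outline is sound and would work, but the paper's cotangent-plus-Poincar\'e-sphere detour buys a clean hyperbolic local picture at $h\in\pi\mathbb Z$ and replaces your hardest quantitative step by a two-line topological one.
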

In order to illustrate graphically these solutions and ease the
understanding of the results, we plot them in the following figures.

\medskip

\begin{figure}\label{fig.1}
 $$\includegraphics[width=0.47\textwidth]{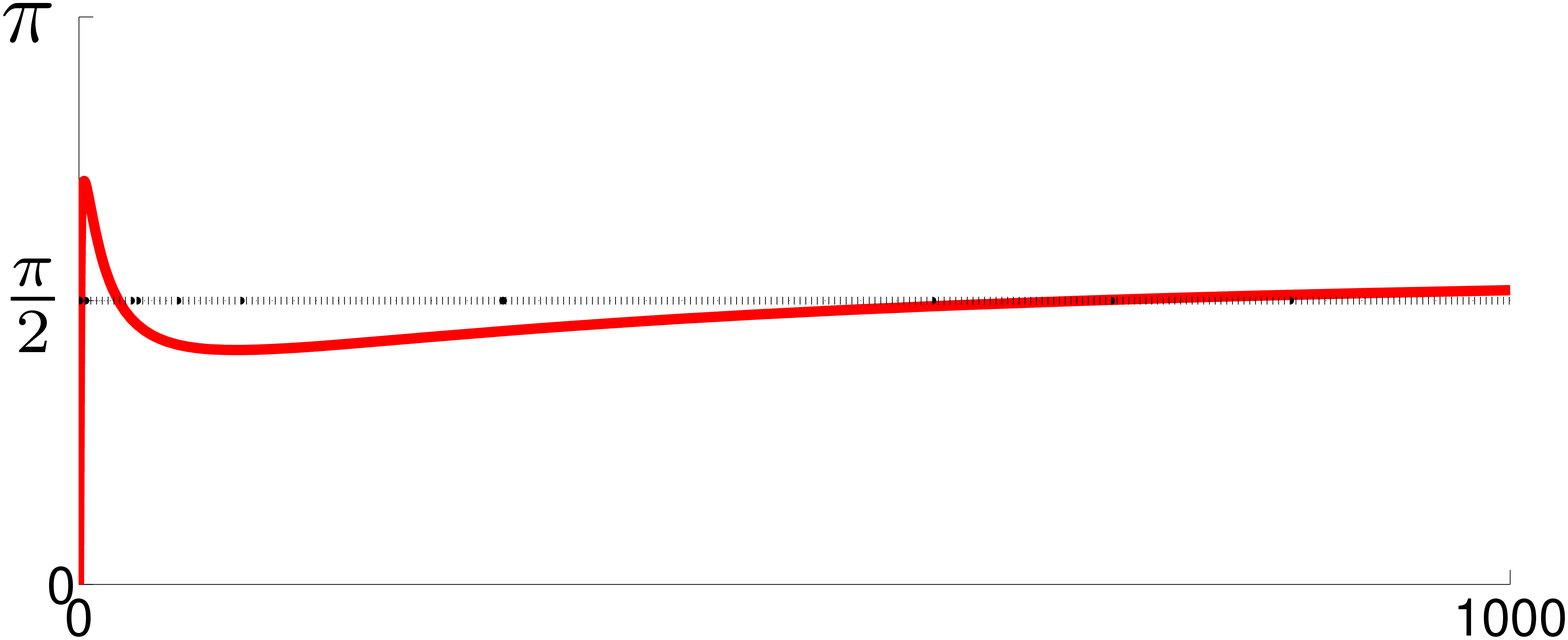}\,\, \includegraphics[width=0.47\textwidth]{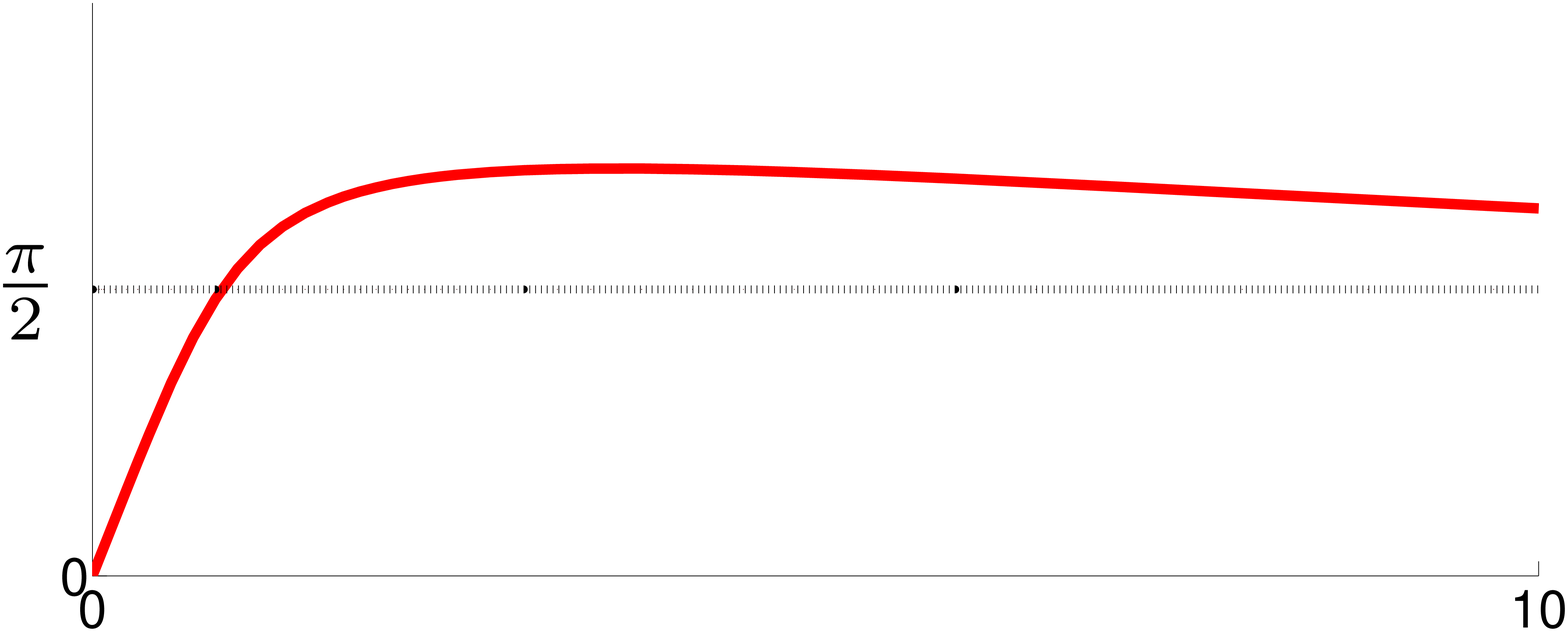}$$
   \caption{Global solution to \eqref{ODE} for $1<p<2$ as given by Theorem \ref{th.2} (a) and (b). Numerical simulation for $p=1+\frac{\sqrt 3}{3}$. Left: Global solution. Right: zoom of the global solution at the origin.}
\end{figure}
\begin{figure}\label{fig2}
$$ \includegraphics[width=0.47\textwidth]{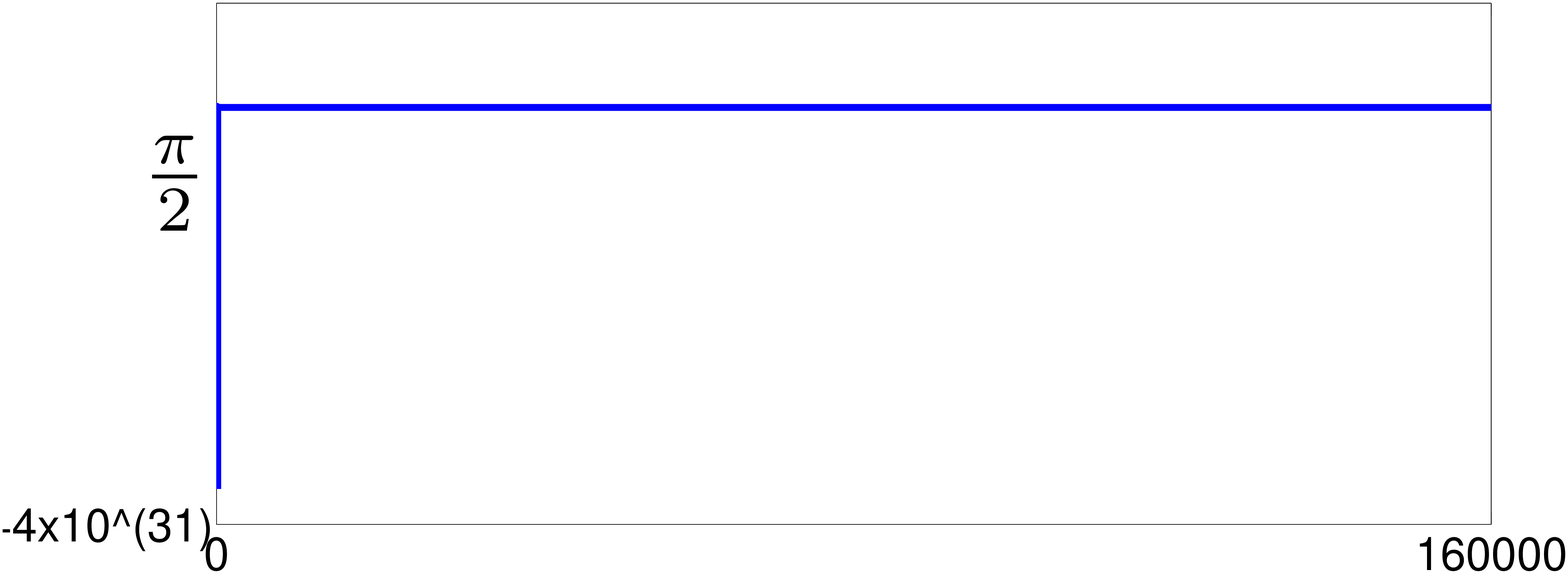}\,\, \includegraphics[width=0.47\textwidth]{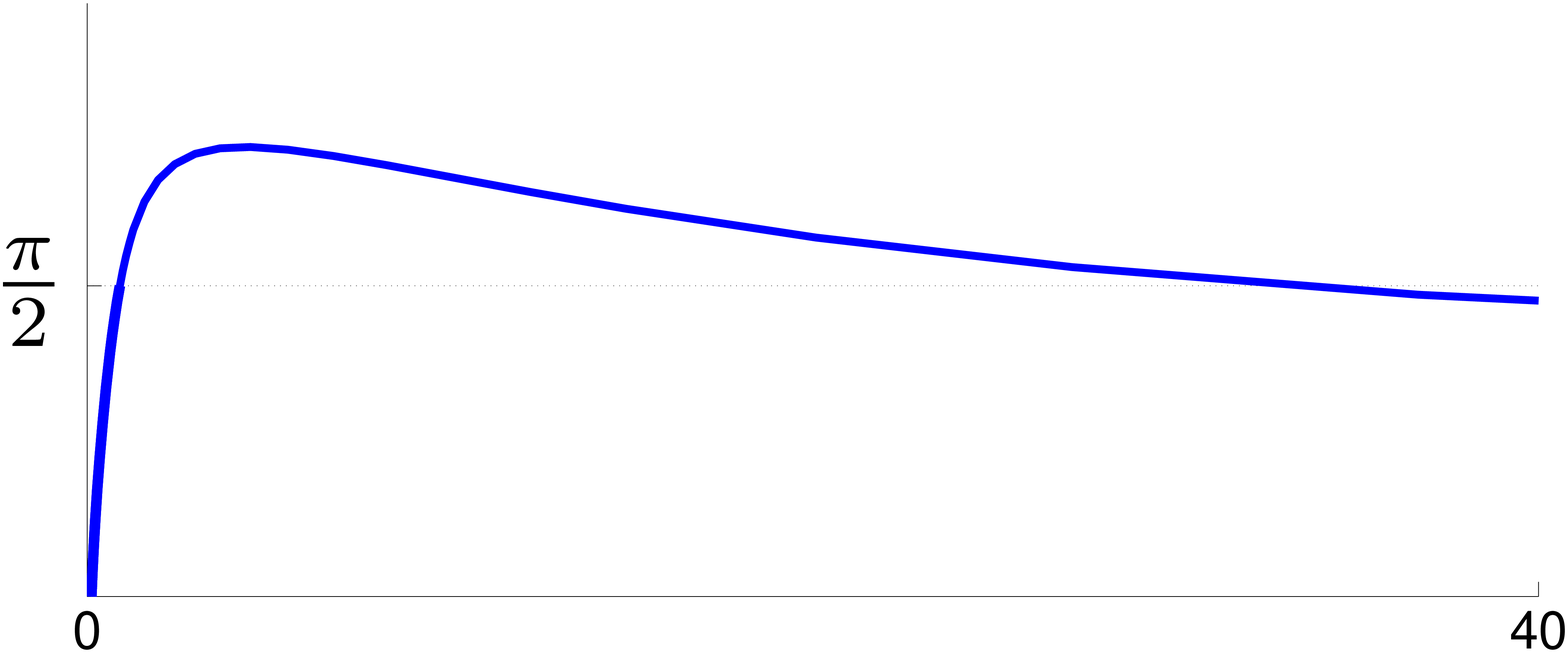}
  $$ \caption{Global solution to \eqref{ODE} for $1<p<2$ as given by Theorem \ref{th.2} (e). Numerical simulation for $p=1+\frac{\sqrt 3}{3}$. Left: Global solution. Right: zoom of the global solution at the origin.}
\end{figure}

Due to the scaling invariance of Eq. \eqref{ODE} with respect to
$r$, Theorem \ref{th.2} shows that, for $1<p<2$, in addition to the minimizer
given in Theorem \ref{th.1} and to the constant solution
$h\equiv\pi/2$, there exists an infinite sequence of critical points
for $G_p$ subject to $h(1)=\pi/2$. This is similar to the limit case
$p=1$, as proved in \cite{dPGM}, but in strong contrast with harmonic maps; i.e. $p=2$, where the minimizer is the only global monotone solution connecting 0 to $\pi/2$.

For the case $p>2$, the picture is completely different and it is surprisingly very different from the case $p=2$.

\begin{theorem}\label{th.3}
Let $p>2$. Then there exists a global solution $h\in
C^{\infty}((0,\infty))\cap C^1([0,\infty))$ to Eq. \eqref{ODE} in a
classical sense, which satisfies the following properties:

\noindent (a) $h(0)=0$, $h$ is increasing and $\lim\limits_{r\to\infty}h(r)=+\infty$;

\noindent (b) For any $l\in(0,\pi/2)$, the minimizer $h_l$ of
$G_{p}$ satisfies $h_{l}(r)=h(\a r)$ (or $h_{l}(r)=\pi-h(\a r)$ if
$l=\pi/2$), for some suitable $\a>0$;

\noindent (c) Any non-constant global solution $\tilde{h}$ to Eq.
\eqref{ODE} such that $\tilde h(0)=k\pi$ for some $k\in \mathbb Z$,
has the form $\tilde{h}(r)=k\pi\pm h(\a r)$, for some $\a>0$.

\noindent (d) There are infinitely many global solutions to
\eqref{ODE}. They verify $h(0)=\frac{(2k+1)\pi}{2}$ for some
$k\in\mathbb Z$. They may oscillate around $\frac{(2k+1)\pi}{2}$ up
to some $r_*\in (0,+\infty)$ and then either they increase up to
$2(k+1)\pi$ or to $+\infty$ or they decrease up to $2k\pi$ or to
$-\infty$.

\noindent (e) There are infinitely many global solutions which have
$-\infty$ (respectively $+\infty$) as limit when $r\to 0^+$. They
are either increasing (resp. decreasing) forever and with limit
$+\infty$ (resp. $-\infty$).
\end{theorem}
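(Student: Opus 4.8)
The plan is to exploit the scaling invariance of \eqref{ODE} under $r\mapsto\a r$ to reduce it to an autonomous planar system, and then run a complete phase-plane analysis driven by a single monotone functional. Putting $t=\log r$ and $(x,y)=(h,\,rh')$, Eq. \eqref{ODE} becomes
\begin{equation*}
\dot x=y,\qquad \dot y=y+\frac{-y^{3}-(p-3)\sin x\cos x\,y^{2}+(p-3)\sin^{2}x\,y+\sin^{3}x\cos x}{(p-1)y^{2}+\sin^{2}x},
\end{equation*}
whose vector field is real-analytic off the points $(k\pi,0)$, $k\in\mathbb{Z}$, has equilibria exactly at $(k\pi/2,0)$, and is invariant under $(x,y)\mapsto(x+\pi,y)$, $(x,y)\mapsto(-x,-y)$ and $(x,y)\mapsto(\pi-x,-y)$; hence only the equilibria $(0,0)$ and $(\pi/2,0)$ need be analysed. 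Since the nonlinearity is $O(|y|)$ as $|y|\to\infty$, no solution blows up in finite $t$, and every trajectory defined on all of $\real$ yields a global solution of \eqref{ODE}, smooth on $(0,\infty)$ and of class $C^1$ up to $r=0$ whenever it limits at an equilibrium as $t\to-\infty$. The motor of the argument is
\begin{equation*}
H(t)=e^{(2-p)t}\big(y^{2}+\sin^{2}x\big)^{(p-2)/2}\big[(p-1)y^{2}-\sin^{2}x\big],
\end{equation*}
the Hamiltonian of the reduced (non-autonomous) Lagrangian $e^{(2-p)t}(\dot h^{2}+\sin^{2}h)^{p/2}$; a direct computation gives $\dot H=(p-2)e^{(2-p)t}(y^{2}+\sin^{2}x)^{p/2}\ge0$, with equality only at the equilibria $(k\pi,0)$, so $H$ is \emph{strictly increasing} along every non-constant trajectory (in particular there are no periodic orbits). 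Two elementary evaluations are used repeatedly: if $x(u)\in\pi\mathbb{Z}$ then $H(u)=(p-1)e^{(2-p)u}|y(u)|^{p}\ge0$, while if $y(t_{1})=0$ then necessarily $\sin x(t_{1})\ne0$ (else $(x(t_1),0)$ is an equilibrium) and $H(t_{1})=-e^{(2-p)t_1}(\sin^{2}x(t_{1}))^{p/2}<0$.

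\emph{Local structure at equilibria.} The linearisation at $(\pi/2,0)$ is $\begin{pmatrix}0&1\\-1&p-2\end{pmatrix}$, with eigenvalues $\tfrac12\big((p-2)\pm\sqrt{(p-2)^{2}-4}\big)$; for $p>2$ both have positive real part, so $(\pi/2,0)$ is a source — an unstable focus if $2<p<4$, an unstable node if $p\ge4$. (This is precisely where $p>2$ departs from the harmonic case $p=2$, where this point is a centre.) The equilibrium $(0,0)$ is degenerate; the blow-up $y=xw$ gives on the exceptional divisor the scalar flow $\dot w=-\dfrac{(w-1)(w^{2}+1)\big((p-1)w+1\big)}{(p-1)w^{2}+1}$, with rest points $w=1$ and $w=-1/(p-1)$. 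In the blown-up plane $(0,1)$ is a saddle whose one-dimensional unstable manifold enters $\{x>0\}$, while $(0,-1/(p-1))$ is a saddle whose stable manifold consists of trajectories converging to $(0,0)$ as $t\to+\infty$ with $|h(r)|\sim c\,r^{-1/(p-1)}$; the unstable manifold of the latter lies inside the divisor, so it contributes no solution of \eqref{ODE}. Consequently there is exactly one trajectory $\gamma_{0}$ emanating from $(0,0)$ into $\{x>0\}$ (and, by oddness, exactly one into $\{x<0\}$), and along it $rh'/h\to1$ as $r\to0^{+}$, so it gives a solution with $h(0)=0$, $h'(0)>0$. By periodicity the same description holds at every $(k\pi,0)$.

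\emph{Parts (a)--(c).} From $h\sim h'(0)r$ one checks $H(t)\to0$ as $t\to-\infty$ along $\gamma_0$; since $H$ is strictly increasing, $H>0$ everywhere on $\gamma_{0}$, i.e. $(p-1)(rh')^{2}>\sin^{2}h$ on $(0,\infty)$. Hence $h'$ never vanishes, so $h$ is strictly increasing, and its limit as $r\to\infty$ cannot be finite: a finite limit would force convergence to an equilibrium $(L,0)$, but then near $t=+\infty$ one has $(p-1)(rh')^{2}-\sin^{2}h<0$ (for $L=m\pi$ via the only admissible approach, along the stable manifold of $(0,-1/(p-1))$; for $L=(2m+1)\pi/2$ because the bracket tends to $-1$), contradicting $H>0$. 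Therefore $h\to+\infty$, which is (a). For (b), by Theorem~\ref{th.1} the minimiser $h_{l}$ is increasing with $h_{l}(0)=0$, so its trajectory emanates from $(0,0)$ into $\{x>0\}$, hence equals $\gamma_{0}$ up to a time-shift; since $h$ increases from $0$ to $+\infty$ there is a unique $\a>0$ with $h(\a)=l$, giving $h_{l}(r)=h(\a r)$ (or $\pi-h(\a r)$ when $l=\pi/2$, by the reflection symmetry). For (c), a non-constant global solution with $\tilde h(0)=k\pi$ has a trajectory limiting to $(k\pi,0)$ as $t\to-\infty$; by periodicity and oddness the only such trajectories are the translates of $\pm\gamma_{0}$, so $\tilde h(r)=k\pi\pm h(\a r)$, and globality follows from (a).

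\emph{Parts (d)--(e), and the main obstacle.} Since $((2k+1)\pi/2,0)$ is a source, infinitely many trajectories emanate from it, giving infinitely many solutions with $h(0)=(2k+1)\pi/2$; they oscillate around $(2k+1)\pi/2$ when $2<p<4$ and leave along an eigendirection when $p\ge4$. Because $\dot H>0$ on the curve $\{(p-1)y^{2}=\sin^{2}x\}$, a trajectory can never re-enter the lobe $L_{k}=\{k\pi<x<(k+1)\pi,\ (p-1)y^{2}<\sin^{2}x\}$ once it has left it; as there are no periodic orbits and (one checks) no heteroclinic cycles — the only unstable orbits of the corner equilibria are the translates of $\pm\gamma_0$, which escape to $\pm\infty$ — Poincar\'e--Bendixson forces any trajectory that remains in $\overline{L_{k}}$ to converge to one of the corner equilibria $(k\pi,0)$, $((k+1)\pi,0)$ (the ``monotone up to a multiple of $\pi$'' alternative), while a trajectory that leaves $L_{k}$ has $H>0$ afterwards, hence $h'$ of constant sign, hence (as in (a)) is monotone with limit $\pm\infty$. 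This is (d). For (e): a trajectory whose $\alpha$-limit set is not an equilibrium is unbounded as $t\to-\infty$ (no periodic orbits, no heteroclinic cycles) and, since it cannot blow up in $y$, has $x$ unbounded there, so it crosses $\pi\mathbb{Z}$ at times $u\to-\infty$; if it had a zero of $y$ at some $t_{1}$ with $\sin x(t_1)\ne 0$, then $H(u)\ge0>H(t_{1})$ for such $u<t_{1}$ would contradict the monotonicity of $H$; hence $y$ has constant sign, $h$ is strictly monotone with $h(0^{+})=\mp\infty$, and — having infinitely many crossings of $\pi\mathbb{Z}$, so $H>0$ throughout — its limit as $r\to\infty$ is $\pm\infty$ by the argument of (a). Infinitely many such trajectories exist, since those not emanating from an equilibrium form an open set. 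This is (e). The genuine difficulty throughout is exactly the package of ``escape to infinity'' statements — that $\gamma_{0}$ neither turns around nor stalls, and the classification of the forward behaviour of \emph{every} trajectory — and these are all settled by the single monotone functional $H$; the blow-up at the degenerate equilibria $(k\pi,0)$ and the dichotomy $p<4$ versus $p\ge4$ for the source are the remaining technical ingredients.
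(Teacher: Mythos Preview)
Your argument is essentially correct and takes a genuinely different route from the paper. The paper works in the variable $w=\cot h$ and does a full Poincar\'e--sphere analysis of the resulting system, identifying an \emph{elliptic} critical point at infinity ($P_3$) which encodes the solutions with $h(0^+)=\pm\infty$; the global picture is then assembled from local analyses at six critical points (finite and at infinity), together with a separate lemma excluding singularities and finite-time blow-up. You instead stay in the natural coordinates $(x,y)=(h,rh')$, blow up only the degenerate equilibria $(k\pi,0)$, and let a single monotone functional do the heavy lifting. This is more conceptual --- the Lagrangian origin of \eqref{ODE} is used directly --- and bypasses the Poincar\'e sphere and the elliptic-point analysis entirely; on the other hand the paper's compactified phase portrait makes the global topology (including the $h(0^+)=\pm\infty$ solutions) visible at a glance.

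Three places deserve a second look. First, the blanket claim that a trajectory limiting at an equilibrium as $t\to-\infty$ yields a solution $C^1$ up to $r=0$ is false for the sources $((2k+1)\pi/2,0)$: the real part of the eigenvalues there is $(p-2)/2$, so $|y|\sim r^{(p-2)/2}$ and $h'\sim r^{(p-4)/2}$, which blows up for $2<p<4$ (and for $p>4$ the generic rate is $\lambda_-<1$, same conclusion). This does not affect the theorem --- $C^1$ is only asserted for the special solution $\gamma_0$, where your computation $rh'/h\to1$ is correct --- but the sentence should be restricted to the corner equilibria. Second, ``$H$ strictly increasing $\Rightarrow$ no periodic orbits'' needs one more line, because $H$ depends explicitly on $t$: on a hypothetical $T$-periodic orbit you get $H(t+T)=e^{(2-p)T}H(t)$ and $H(t+T)>H(t)$, forcing $H<0$ throughout; but then the \emph{autonomous} quantity $V(x,y):=e^{(p-2)t}H=(y^2+\sin^2x)^{(p-2)/2}[(p-1)y^2-\sin^2x]$, which satisfies $\dot V=p(p-2)\,y^2(y^2+\sin^2x)^{(p-2)/2}\ge0$, would have to be constant, hence $y\equiv0$, a contradiction. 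Using $V$ rather than $H$ also cleans up the Poincar\'e--Bendixson and boundedness-of-$y$ steps (indeed $V(t)\le V(0)$ for $t\le0$ gives the bound on $|y|$ backward in time immediately). Third, the existence claim in (e) --- ``those not emanating from an equilibrium form an open set'' --- is not quite the right formulation (source basins are open, unstable manifolds of the corners are curves) and in any case you still need to exhibit one such trajectory. One clean way: the curves $\Gamma_k=\gamma_0+(k\pi,0)$ are disjoint graphs $y=Y(x-k\pi)$ over $x>k\pi$ with $\Gamma_k$ strictly above $\Gamma_{k+1}$; the basin of each source lies below the adjacent $\Gamma_k$'s, so the open region above all $\Gamma_k$ in $\{y>0\}$ contains no source basin and no $\gamma_0$-translate, and any trajectory there has $x\to-\infty$ backward in time, giving $h(0^+)=-\infty$.
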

We plot the most interesting solutions given by Theorem \ref{th.3}
in the following figures.

\medskip

\begin{figure}[htb]
$$ \includegraphics[width=0.47\textwidth]{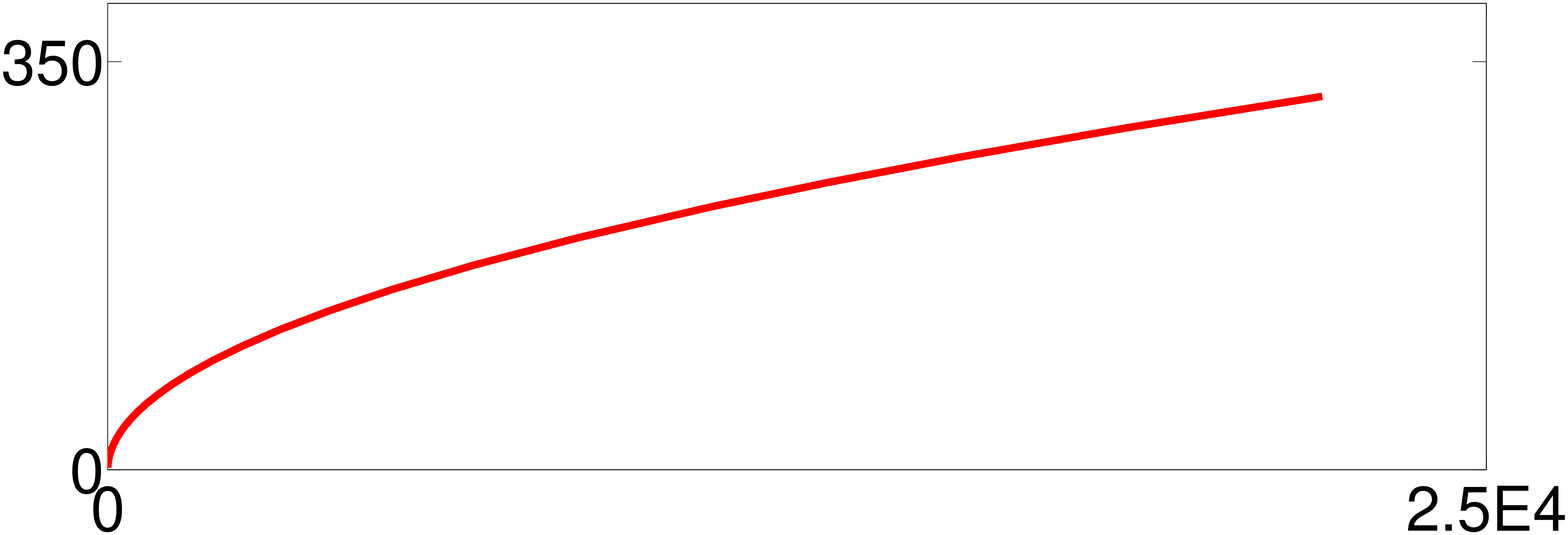}\,\, \includegraphics[width=0.47\textwidth]{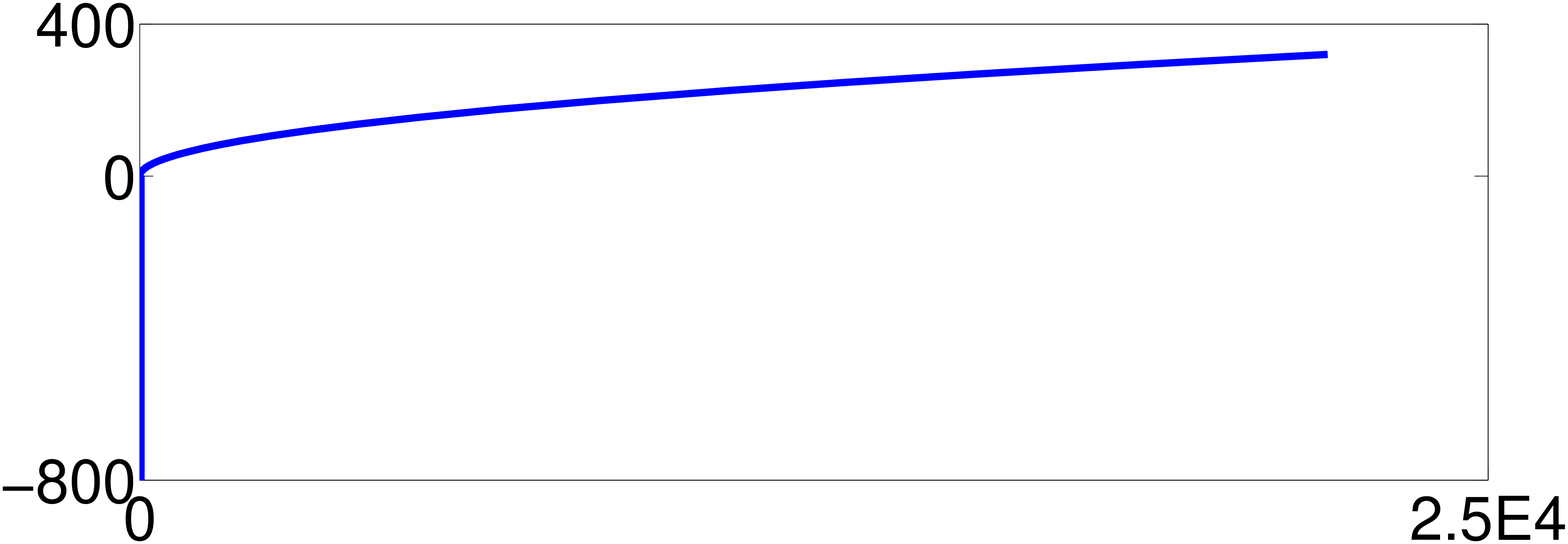}
  $$ \caption{Global solution to \eqref{ODE} for $p>2$ as given by Theorem \ref{th.3}. Numerical simulation for $p=3$. Left: global solution corresponding to case (a). Right: global solution corresponding to case (e).}
\end{figure}
\begin{figure}[htb]
$$ \includegraphics[width=0.47\textwidth]{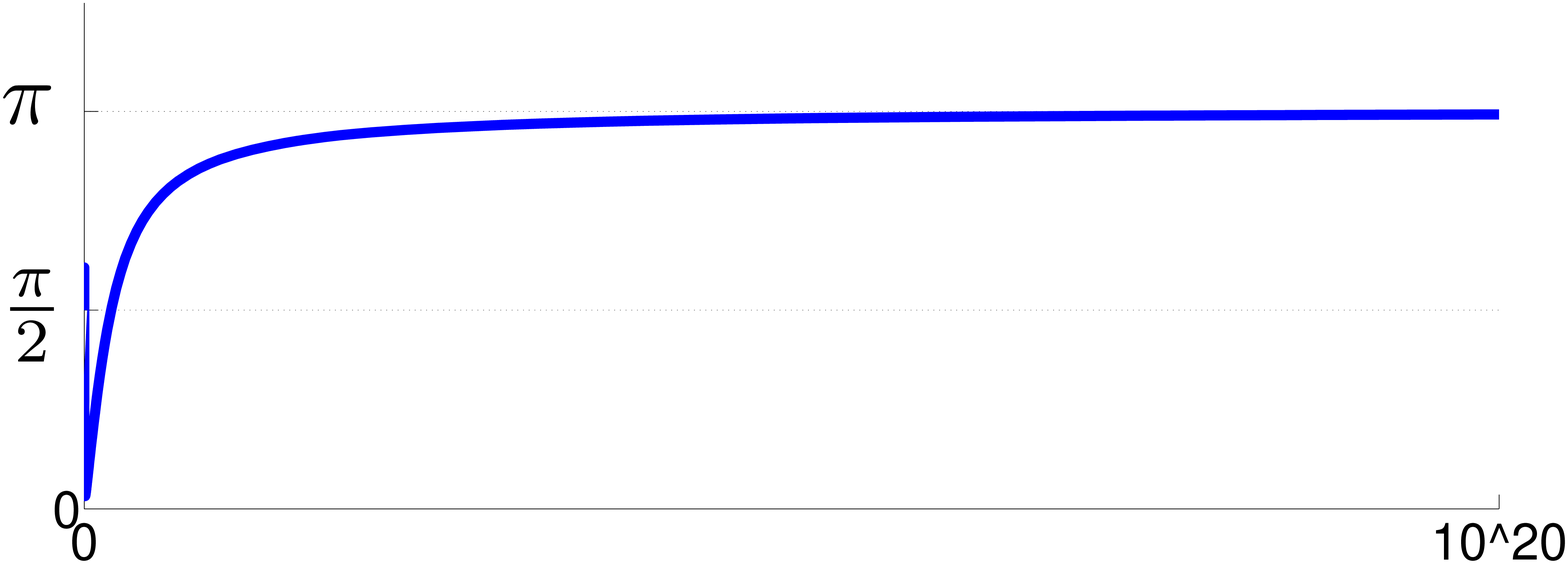}\,\, \includegraphics[width=0.47\textwidth]{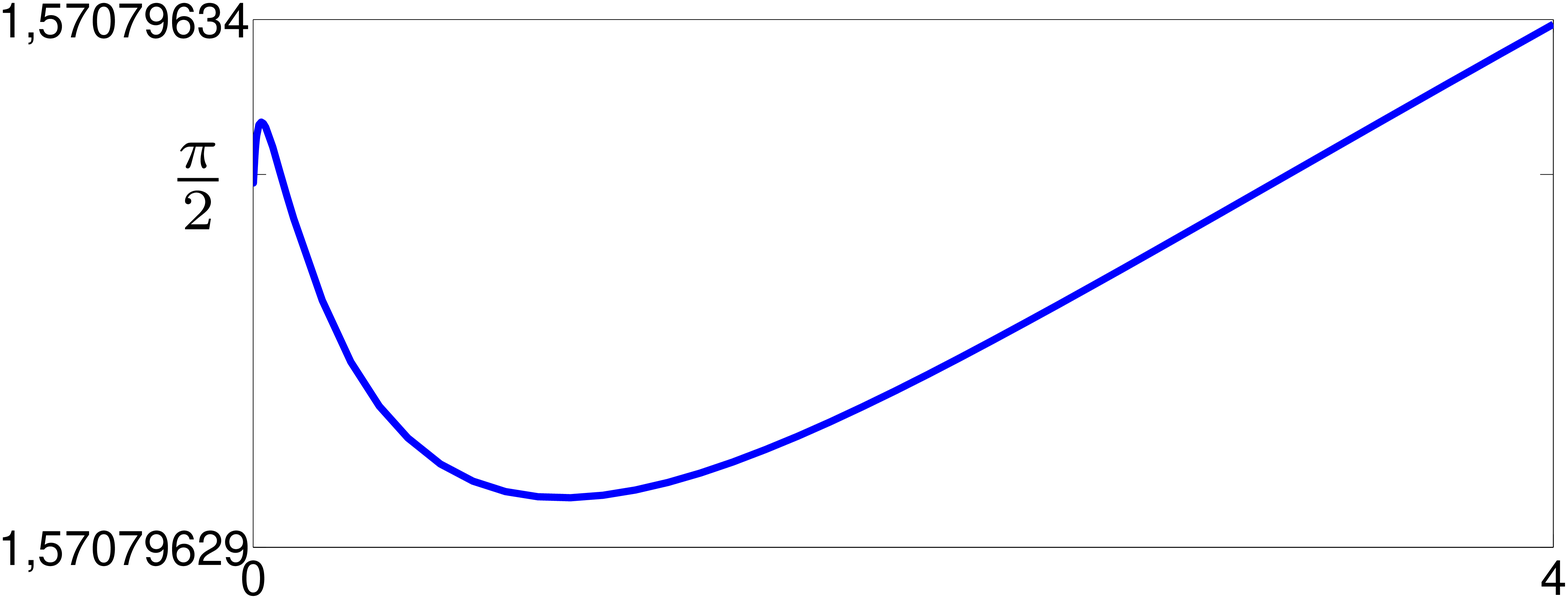}
   $$ \caption{Global solution to \eqref{ODE} for $p>2$ as given by Theorem \ref{th.3} (d). Numerical simulation for $p=2.1$. Left: global solution. Right: zoom of the global solution at the origin.}
\end{figure}

The paper is organized as follows: first, in Section \ref{sec.var}
we show the existence (and regularity) of a minimizer through
variational techniques. Then, we restrict to solutions of Eq.
\eqref{ODE} and classify them, in order to prove our main results.
This is done by the analysis of a rather complicated phase plane
associated to \eqref{ODE} after various changes of variable. In a
first step, we study in Section \ref{sec.lb0} the behavior of
solutions $h$ of \eqref{ODE} as $r\to0$, which allows us to complete
then the proof of Theorem \ref{th.1} by showing the uniqueness of
monotone solutions with the expected behavior for \eqref{ODE}; the
latter is done in Section \ref{sec.unique}. In a second and final
step, in Section \ref{sec.cp}, we prove Theorems \ref{th.2} and
\ref{th.3} by analyzing the behavior of solutions of Eq. \eqref{ODE}
for large $r$.

\section{Existence of a smooth minimizer}\label{sec.var}

First of all, we collect some properties of solutions to problem
(\ref{P}). We state the next result without proof since its proof is
identical to that of \cite[Lemmas
1--3]{dPGM}.\begin{lemma}\label{lempro} Up to a reflection with
respect to $h_l=\frac{\pi}{2}$ if $l=\frac{\pi}{2}$, any solution
$h_l$ to problem (\ref{P}) is  nondecreasing and $h_l([0,1])\subset
[0,l]$.
\end{lemma}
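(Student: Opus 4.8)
The plan is to establish the two claims—that any minimizer $h_l$ is nondecreasing and that its range lies in $[0,l]$—by direct truncation/rearrangement arguments against the energy $G_{p,l}$, exactly as in \cite[Lemmas~1--3]{dPGM}. First I would observe that the Lagrangian $L_p(r,s,\xi)=(r^{2/p}\xi^2+r^{(2-2p)/p}\sin^2 s)^{p/2}$ depends on $s$ only through $\sin^2 s$, hence is invariant under $s\mapsto s+k\pi$ and under $s\mapsto -s$, and is monotone nondecreasing in $\sin^2 s$; this is what makes the competitor constructions below admissible and energy-nonincreasing. I would also note that $L_p\ge 0$, that $L_p(r,s,\cdot)$ is convex and even in $\xi$, and that $L_p(r,s,0)=r^{(p-2p^2)/(2p)}|\sin s|^p$ (the ``potential'' part).

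For the range claim, suppose $h_l\in W^{1,1}(0,1)\cap W^{1,p}_{loc}(0,1)$ is a minimizer with boundary value $h_l(1)=l\in(0,\pi/2]$. I would first reduce to $h_l\ge 0$: replace $h_l$ by the competitor obtained from a suitable truncation/folding that pushes all values below $0$ up to $0$—more precisely, if $l<\pi/2$ one truncates $h_l$ at level $0$ from below and at level $l$ from above via $\tilde h=\max\{0,\min\{h_l,l\}\}$; since $\sin^2$ is increasing on $[0,\pi/2]$ and flat-or-smaller at the truncation, and since $|\tilde h'|\le|h_l'|$ a.e.\ (derivative vanishes on the truncation set), one gets $L_p(r,\tilde h,\tilde h')\le L_p(r,h_l,h_l')$ pointwise a.e., whence $G_{p,l}(\tilde h)\le G_{p,l}(h_l)$, and $\tilde h$ still satisfies $\tilde h(1)=l$. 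By uniqueness of the minimizer (or by the strict-inequality version of these estimates away from the constraint sets), $h_l=\tilde h$, giving $0\le h_l\le l$. The borderline case $l=\pi/2$ needs the extra folding $s\mapsto \pi-s$ on the region where $h_l>\pi/2$, which is energy-preserving because $\sin^2(\pi-s)=\sin^2 s$, and this is precisely where the ``up to a reflection'' clause enters.

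For monotonicity, once $h_l([0,1])\subset[0,l]\subset[0,\pi/2]$, I would use a rearrangement: replace $h_l$ by its monotone nondecreasing rearrangement $h_l^*$ on $(0,1)$ (equivalently, argue via a ``sorting'' of the level sets adapted to the weights $r^{2/p}$ and $r^{(2-2p)/p}$). The key point is that moving mass to make $h$ increasing cannot raise the gradient term—by the weighted Pólya--Szegő-type inequality, using that the weight $r^{2/p}$ multiplying $\xi^2$ is increasing in $r$, so concentrating the steep parts near $r=1$ is favorable—and cannot raise the potential term either, since on $[0,\pi/2]$ larger $h$ means larger $\sin^2 h$ but the potential weight $r^{(2-2p)/p}$ is decreasing in $r$, so pushing the larger values toward $r=1$ lowers $\int r^{(2-2p)/p}\sin^2 h\,dr$-type contributions inside the $p/2$ power; a convexity/Hardy--Littlewood rearrangement argument makes this precise. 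Hence $G_{p,l}(h_l^*)\le G_{p,l}(h_l)$ with $h_l^*(1)=l$ still, and again uniqueness forces $h_l=h_l^*$, i.e.\ $h_l$ is nondecreasing.

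The main obstacle is the monotonicity step: unlike the plain $p=1$ case in \cite{dPGM}, here one must handle the two competing $r$-weights simultaneously inside the nonlinear $(\,\cdot\,)^{p/2}$ nonlinearity, so the rearrangement inequality is not completely off-the-shelf. I expect the cleanest route is the one used in \cite{dPGM}: rather than a global rearrangement, argue \emph{locally}—if $h_l$ had a strict interior local maximum or a decreasing portion, excise it and reflect/truncate across the relevant level to produce an admissible competitor with strictly smaller energy, contradicting minimality; the weight monotonicities ($r^{2/p}\uparrow$, $r^{(2-2p)/p}\downarrow$) guarantee the excision is energy-decreasing. Since this is verbatim the argument of \cite[Lemmas~1--3]{dPGM} with $L_p$ in place of the $p=1$ Lagrangian, and all the structural properties of $L_p$ used there (nonnegativity, convexity and evenness in $\xi$, monotone dependence on $\sin^2 s$, and the stated $r$-monotonicity of the two weights) continue to hold for every $p>1$, the proof carries over without change, which is why we omit the details. \qed
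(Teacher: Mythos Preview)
Your proposal and the paper take the same route: the paper omits the proof and simply cites \cite[Lemmas~1--3]{dPGM}, and you do the same while sketching how those arguments should transfer. So there is no disagreement in overall strategy.

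However, your sketch of the range claim has a genuine gap. The one-step truncation $\tilde h=\max\{0,\min\{h_l,l\}\}$ does \emph{not} give the pointwise inequality $L_p(r,\tilde h,\tilde h')\le L_p(r,h_l,h_l')$ you assert: nothing yet prevents $h_l$ from taking values in $(\pi-l,\pi]$ (or beyond), and there $\sin^2 l>\sin^2 h_l$, so replacing $h_l$ by the constant $l$ can \emph{raise} the potential contribution---for instance if $h_l\equiv\pi$ on a subinterval, $L_p$ vanishes there while $L_p(r,l,0)=r^{1-p}\sin^p l>0$. The argument that actually works (and is the one in \cite{dPGM}) proceeds in stages: first truncate at $0$ and at $\pi$, which \emph{is} strictly energy-decreasing because $\sin^2$ vanishes at $k\pi$; then fold across $\pi/2$ via $s\mapsto\pi-s$ to obtain a minimizer with range in $[0,\pi/2]$ (this step is only energy-preserving, whence the ``up to a reflection'' clause when $l=\pi/2$); then run your local truncation/excision monotonicity argument on that folded minimizer, which now works because $\sin^2$ is genuinely increasing on its range; monotonicity together with $h(1)=l$ then forces the range $[0,l]$, and for $l<\pi/2$ this a posteriori rules out the original $h_l$ ever having crossed $\pi/2$. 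Finally, your appeal to ``uniqueness of the minimizer'' to identify $h_l$ with $\tilde h$ is circular: uniqueness is only established later in Proposition~\ref{prop.unique}, and that argument uses the present lemma. The strict-inequality variant you mention parenthetically is what must actually be invoked.
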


Next, we show that any solution of \eqref{P} must be positive in $(0,1]$.

\begin{lemma}
  \label{positivity} Let $h_l\in W^{1,1}(0,1)\cap W^{1,p}_{loc}((0,1])$ with $h_l(1)=l$ be a solution to \eqref{P}. Then, $h_l$ is positive in $(0,1]$.
\end{lemma}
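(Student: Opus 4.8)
The plan is to argue by contradiction, exploiting the minimality of $h_l$ together with the structure of the Lagrangian $L_p(r,s,\xi)=(r^{2/p}\xi^2+r^{(2-2p)/p}\sin^2 s)^{p/2}$. By Lemma \ref{lempro} we already know $h_l$ is nondecreasing with $h_l([0,1])\subset[0,l]$ and $h_l(1)=l>0$; in particular $h_l$ is not identically zero. Suppose for contradiction that $h_l$ is \emph{not} positive on all of $(0,1]$. Since $h_l$ is nondecreasing, the set $\{r\in(0,1]:h_l(r)=0\}$ is then an interval $(0,r_0]$ with $r_0\in(0,1)$, and $h_l>0$ on $(r_0,1]$.

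The key step is to construct a competitor with strictly smaller energy by ``translating mass toward the origin.'' Concretely, I would fix $\varepsilon>0$ small and define a rescaled competitor $\tilde h$ that agrees with $h_l$ near $r=1$ but is supported on a slightly larger subinterval of $(0,1)$ than $(r_0,1]$. The cleanest device is the scaling quasi-invariance of the Lagrangian: for $\lambda>1$ set $\tilde h_\lambda(r):=h_l(\min\{\lambda r,1\})$ — or a smoothed version thereof — so that $\tilde h_\lambda(1)=l$ (the boundary condition is preserved once $\lambda r$ is capped at $1$), $\tilde h_\lambda$ now vanishes only on $(0,r_0/\lambda]$, and on $(r_0/\lambda,1/\lambda)$ it equals $h_l(\lambda r)$. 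A direct change of variables $r\mapsto\lambda r$ shows
\begin{equation*}
\int_{r_0/\lambda}^{1/\lambda}L_p(r,\tilde h_\lambda,\tilde h_\lambda')\,dr=\lambda^{-1}\int_{r_0}^{1}L_p(\rho/\lambda,h_l,\lambda h_l')\,d\rho,
\end{equation*}
and since $L_p(\rho/\lambda,s,\lambda\xi)=\lambda^{p}\,(\lambda^{-2/p}\cdot\lambda^{2/p}\cdot\ldots)$ — here one uses the explicit homogeneities $r^{2/p}$ and $r^{(2-2p)/p}$ — the factor works out so that each of the two terms under the $p/2$-power scales by the \emph{same} power of $\lambda$ only in the limit case; for general $p$ the diffusion term and the potential term scale differently, which is exactly what produces a strict gain. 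One finds that pushing mass inward strictly decreases the contribution of the $\sin^2 h$ potential term (because the weight $r^{(2-2p)/p}$ is being evaluated at smaller radii where, for $p>1$, $2-2p<0$ makes it \emph{larger} — so one must be careful about the sign and instead push mass the other way, toward $r=1$). The honest statement: comparing $E_p(h_l)$ with $E_p(h_l(\,\cdot\,+\text{shift}))$ extended by $0$, the potential term's weight monotonicity forces a strict inequality unless $r_0=0$.

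An alternative and perhaps more robust route, which I would actually favor, is the following \emph{interior variation / unique continuation} argument. On the interval $(0,r_0)$ the minimizer satisfies $h_l\equiv0$; if $r_0>0$, then $h_l$ also satisfies the Euler–Lagrange equation \eqref{ODE} on any subinterval where it is smooth, and one examines what happens at $r_0$. Near $r_0^-$ we have $h_l=0$, and just to the right $h_l$ is positive and (by Lemma \ref{lempro}) nondecreasing. Build the competitor $\tilde h(r):=\max\{h_l(r)-c\,\varphi(r),0\}$ for a fixed bump $\varphi\ge0$ supported in a neighbourhood of $r_0$ inside $(0,1)$ and small $c>0$; because $L_p$ is strictly convex in $\xi$ away from $\xi=0$ and the potential term $r^{(2-2p)/p}\sin^2 h$ is strictly decreased wherever $h$ is decreased within $(0,\pi)$, one gets $E_p(\tilde h)<E_p(h_l)$ as soon as $h_l\not\equiv0$ near $r_0$ on the right — contradicting minimality. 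Equivalently, subtracting a one-sided variation that only turns on where $h_l>0$ shows that having a ``flat zero piece'' adjacent to a strictly positive piece is never optimal.

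I would therefore write the proof as: (i) invoke Lemma \ref{lempro} to get monotonicity and the range bound; (ii) assume $h_l(r_0)=0$ for some $r_0\in(0,1)$ and hence $h_l\equiv0$ on $(0,r_0]$; (iii) exhibit the explicit competitor above — I would use the rescaling/translation competitor $\tilde h_\lambda(r)=h_l(1-\lambda(1-r))$ for $\lambda\in(0,1)$ close to $1$, extended by $0$ on $(0,1-1/\lambda)$ wait — more simply, translate the graph so the support becomes $[r_0-\delta,1]$ — and compute $E_p(\tilde h)-E_p(h_l)$ via the substitution that moves the interval, using that the weights $r^{2/p}$ and $r^{(2-2p)/p}$ are both monotone in $r$, to get a strict decrease; (iv) conclude by contradiction with \eqref{P}. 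The main obstacle is bookkeeping the two different radial weights so that the competitor genuinely lowers \emph{both} terms (or at least the sum); since $2/p>0$ while $(2-2p)/p<0$, translating the support toward $r=1$ lowers the coefficient in the potential term but may raise it in the gradient term, so the argument must quantify that the net effect is a strict loss — this is where care (but no deep idea) is needed, and one leans on the fact that $h_l$ is not constant so the gradient term is not identically zero and the comparison is not degenerate.
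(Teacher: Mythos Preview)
Your proposal does not yet contain a proof: you offer three sketches (rescaling, bump subtraction, translation) and in each case you correctly identify the obstacle but do not overcome it. The translation/rescaling route is blocked exactly where you say it is: the two radial weights $r^{2/p}$ and $r^{(2-2p)/p}$ are monotone in opposite directions for $p>1$, so moving the support of $h_l$ either way lowers one term and raises the other, and nothing in your argument controls the balance. The bump subtraction $\tilde h=\max\{h_l-c\varphi,0\}$ is also insufficient: at a minimizer the one-sided first variation in any admissible direction is nonnegative, so the first-order decrease of $\sin^2 h$ is automatically compensated by the first-order change in the gradient term---you cannot conclude $E_p(\tilde h)<E_p(h_l)$ for small $c$ without a second-order or quantitative argument that you do not supply. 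The phrase ``unique continuation'' is also misplaced here, since $h\equiv 0$ is a genuine solution of \eqref{ODE} and nothing prevents a weak solution from sticking to it on a subinterval unless one first has regularity and a Hopf-type estimate.

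The paper's argument is genuinely different from all three of your attempts, and it is worth internalizing. Write $L_p(r,h,h_r)=r\bigl(h_r^2+\frac{\sin^2 h}{r^2}\bigr)^{p/2}$. If $h_l(a)=0$ for some $a\in(0,1)$ and $h_l>0$ on $(a,1]$, then for small $\varepsilon>0$ the energy on $(0,a+\varepsilon]$ is bounded below by $\int_a^{a+\varepsilon} r\,(h_l)_r^p\,dr$, hence by the minimum of $\int_a^{a+\varepsilon} r\,h_r^p\,dr$ among nondecreasing $h$ with $h(a)=0$, $h(a+\varepsilon)=h_l(a+\varepsilon)$. This weighted $p$-Dirichlet problem is solved explicitly, and its value behaves like $h_l(a+\varepsilon)^p\,(a+\varepsilon)^{2-p}$ times a factor that blows up as $\varepsilon\to 0$. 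On the other hand, replacing $h_l$ on $[0,a+\varepsilon]$ by the explicit profile $\theta(r)=2\arctan\bigl(\tan(\tfrac{h_l(a+\varepsilon)}{2})\,\tfrac{r}{a+\varepsilon}\bigr)$ (which has $\theta(0)=0$, $\theta(a+\varepsilon)=h_l(a+\varepsilon)$) gives an energy on $[0,a+\varepsilon]$ of order $h_l(a+\varepsilon)^p\,(a+\varepsilon)^{2-p}$ with a \emph{bounded} coefficient. Thus for $\varepsilon$ small the patched competitor has strictly less energy, contradicting minimality. The mechanism is not a monotonicity of weights but the quantitative fact that climbing from $0$ to a positive value over a vanishing interval costs unbounded gradient energy, while doing it over the whole interval $[0,a+\varepsilon]$ costs a bounded amount.
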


\begin{proof}
By Lemma \ref{lempro}, we assume without loss of
generality that $h_l\in [0,l]$ is nondecreasing. Assume by contradiction that $0<a<1$ exists such that
$h_l(a)=0$ and $h_l(r)>0$ for $a<r\leq 1$.
Let then $\varepsilon>0$ such that $a-\varepsilon\geq 0$ and $a+\varepsilon\leq 1$. We
first notice that
\begin{equation}\label{estmenergy1}
\int_0^{a+\varepsilon}r\left[((h_l)_r)^2+\frac{\sin^2
h_l}{r^2}\right]^\frac{p}{2}dr>
\int_a^{a+\varepsilon}r((h_l)_r)^p\,dr \geq \min_{ h\in \mathcal
W}\int_{a}^{a+\varepsilon}r( h_r)^p\,dr, \end{equation}with
$$\mathcal W:= \left\{h\in W^{1,p}(a,a+\varepsilon) :   h
{\rm \ nondecreasing },   h(a)=0,
h(a+\varepsilon)=h_l(a+\varepsilon)\right\}.$$We next estimate the
last integral in \eqref{estmenergy1}. In this case, the
corresponding Lagrangian is coercive and convex. Therefore, (see e.g
\cite[Theorem 3.7]{ButGia}) there exists a minimizer $h_a\in
W^{1,p}(a,a+\varepsilon)$. Moreover, it attains the boundary values
and it verifies the corresponding Euler-Lagrange equation:
  $$r((h_a)_r)^{p-1}=C\iff h_a(r)=\frac{(1-p)C^{\frac{1}{p-1}}r^{\frac{2-p}{1-p}}}{2-p}+D.$$Substituting with the boundary values we get:
  $$C=\left[\frac{(p-2)(h_l(a+\varepsilon))[(a+\varepsilon)^\frac{p-2}{p-1}-a^\frac{p-2}{p-1}]^{-1}}{p-1}\right]^{p-1}.$$
  Then, $$\min_{h\in \mathcal W}\int_{a}^{a+\varepsilon}r( h_r)^p\,dr=\int_{a}^{a+\varepsilon} C^\frac{p}{p-1}r^\frac{-1}{p-1}\,dr=\left[\frac{(p-1)C^\frac{p}{p-1}r^\frac{p-2}{p-1}}{p-2}\right]_{a}^{a+\varepsilon}$$
  $$=\frac{(h_l(a+\varepsilon))^p(p-2)^{p-1}}{(p-1)^{p-1}}((a+\varepsilon)^\frac{p-2}{p-1}-a^\frac{p-2}{p-1})^{-p}((a+\varepsilon)^\frac{p-2}{p-1}-a^\frac{p-2}{p-1})$$
  $$=h_l(a+\varepsilon)^p(a+\varepsilon)^{2-p}\left(\frac{p-2}{p-1}\right)^{p-1}\left[1-\left(\frac{a+\varepsilon}{a}\right)^\frac{2-p}{p-1}\right]^{1-p}.$$

On the other hand, if we take
$$
\theta(r)=2\arctan\left(\frac{\tan(\frac{h_l(a+\varepsilon)}{2})r}{(a+\varepsilon)}\right),
$$
we have
$$
\int_0^{a+\varepsilon}\sqrt{r^2(\theta')^2+\sin^2 \theta}\,dr$$$$=
\frac{2^\frac{p}{2}(a+\varepsilon)^{2-p}\tan\left(\frac{h_l(a+\varepsilon)}{2}\right)^p\left[\cos^{2p}\left(\frac{h_l(a+\varepsilon)}{2}\right)-
\cos^2\left(\frac{h_l(a+\varepsilon)}{2}\right)\right]}{(1-p)\sin^2(\frac{h_l(a+\varepsilon)}{2})}.
$$%$$\stackrel{\varepsilon\to 0}\sim 2^{-\frac{p}{2}}h^p(a+\varepsilon)(a+\varepsilon)^{2-2p}$$
Therefore, letting
$$
h^*(r)=\left\{\begin{array}{ll} \theta(r), & 0<r<a+\varepsilon,
\\ \\
h_l(r), & \mbox{elsewhere},
\end{array}
\right.
$$
we obtain that
$$
E_{p,l}(h^*) - E_{p,l}(h_l)<
h_l^p(a+\varepsilon)(a+\varepsilon)^{2-p}\times$$$$\left[-\left[\frac{(2-p)a^\frac{2-p}{p-1}}
{(p-1)((a+\varepsilon)^\frac{2-p}{p-1}-a^\frac{2-p}{p-1})}\right]^{p-1}+2^{-\frac{p}{2}}+o_\varepsilon(1)\right],
$$
as $\e\to0$. Finally note that
$$\left[\frac{(2-p)a^\frac{2-p}{p-1}}
{(p-1)((a+\varepsilon)^\frac{2-p}{p-1}-a^\frac{2-p}{p-1})}\right]^{p-1}\stackrel{\varepsilon\to
0}\to +\infty$$ for $p\neq 2$. Hence $h_l$ cannot be a
minimizer.
\end{proof}

We next show that any solution to problem \eqref{P} is smooth in the bulk.
\begin{lemma}\label{lem-regularity}
  Let $h_l\in W^{1,1}(0,1)\cap W^{1,p}_{loc}((0,1])$ with $h_l(1)=l$ be a solution to \eqref{P}. Then, $h_l\in C^\infty((0,1])$.
\end{lemma}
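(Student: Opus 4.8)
The plan is to use the standard bootstrap argument for the $p$-Laplacian, exploiting that away from $r=0$ the Lagrangian $L_p(r,s,\xi)=(r^{2/p}\xi^2+r^{(2-2p)/p}\sin^2 s)^{p/2}$ is, after restriction to an interval $[a,b]\subset(0,1]$, smooth in $(r,s,\xi)$ and strictly convex in $\xi$ on the region $\xi\neq 0$ (or more precisely on the whole of $\real$ once we know $\sin^2 s$ does not vanish identically, which is guaranteed by Lemma \ref{positivity} together with monotonicity, so that $\sin h_l>0$ on any $[a,b]\subset(0,1)$, hence the integrand is bounded below by a positive constant and the convexity in $\xi$ is uniform). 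First I would write down the weak Euler--Lagrange equation satisfied by the minimizer $h_l$ on $(0,1]$: for every $\varphi\in C_c^\infty((0,1))$,
\begin{equation*}
\int_0^1 \Big(L_{p,\xi}(r,h_l,(h_l)_r)\,\varphi_r + L_{p,s}(r,h_l,(h_l)_r)\,\varphi\Big)\,dr=0,
\end{equation*}
which is justified because $h_l\in W^{1,p}_{loc}((0,1])$ and the integrand of $E_p$ has $p$-growth in $\xi$, so the first variation is well defined for compactly supported perturbations in $(0,1)$.

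Next I would fix $[a,b]\subset(0,1)$ and localize. On this interval $h_l$ is bounded (by Lemma \ref{lempro}, $h_l([0,1])\subset[0,l]$) and nondecreasing, $\sin h_l$ is bounded below away from zero, and $h_l\in W^{1,p}(a,b)$. The weak equation says that the function $A(r):=|\nabla u|^{p-2}r^{2/p}(h_l)_r = r\big(r^{2/p}(h_l)_r^2+r^{(2-2p)/p}\sin^2 h_l\big)^{(p-2)/2}(h_l)_r$ has a weak derivative in $L^\infty_{loc}$, hence $A\in W^{1,\infty}_{loc}\subset C^{0,1}_{loc}$. Inverting the (strictly monotone, since the Lagrangian is uniformly convex in $\xi$ on the relevant region) relation $\xi\mapsto L_{p,\xi}(r,h_l(r),\xi)$ then gives $(h_l)_r$ as a continuous function of $(r, A(r), h_l(r))$, whence $(h_l)_r\in C(a,b)$ and $h_l\in C^1(a,b)$. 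The main technical point to check here is precisely this invertibility: away from $r=0$ and on the set $\sin h_l>0$ the map $\xi\mapsto r^{2/p}\xi(r^{2/p}\xi^2+c)^{(p-2)/2}$ with $c=r^{(2-2p)/p}\sin^2 h_l>0$ is a smooth strictly increasing bijection of $\real$ onto $\real$ for every $p>1$ (its derivative in $\xi$ is strictly positive), so the implicit function theorem applies with no degeneracy. This is the step I expect to be the only real obstacle, and it is mild.

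Once $h_l\in C^1(a,b)$ with $(h_l)_r$ possibly vanishing, I would then argue as follows. If $(h_l)_r(r_0)\neq 0$ at a point $r_0$, the ODE \eqref{ODE} is, near $r_0$, a nondegenerate (the coefficient of $h''$ is $(p-1)h'^2+\sin^2 h/r^2>0$) second-order ODE with $C^\infty$ coefficients, so by the classical regularity theory for ODEs $h_l\in C^\infty$ near $r_0$. It remains to treat points where $(h_l)_r$ vanishes; but by Theorem \ref{th.1} (whose uniqueness part, proved later, we may invoke, or alternatively by the strict positivity and monotonicity already available combined with the fact that a nonconstant solution of the first-order relation $A(r)=$ const with $A$ Lipschitz and $A$ strictly monotone in $(h_l)_r$ cannot have $(h_l)_r$ vanish on a set where it is also required to be increasing) one shows $(h_l)_r>0$ throughout $(0,1)$; alternatively one notes that if $(h_l)_r\equiv 0$ on a subinterval then $\sin h_l\equiv 0$ there by the Euler--Lagrange equation, contradicting Lemma \ref{positivity} and $h_l\le l<\pi$. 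Hence $(h_l)_r>0$ on $(0,1]$ and the local $C^\infty$ conclusion propagates to all of $(0,1]$, using at $r=1$ that the equation is still nondegenerate and $h_l$ extends as a $C^1$ (indeed $C^\infty$) solution on a neighborhood of $1$ from the interior together with the boundary datum $h_l(1)=l$. Since $[a,b]\subset(0,1]$ was arbitrary, $h_l\in C^\infty((0,1])$. \qed
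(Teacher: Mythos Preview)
Your approach is correct in outline but differs from the paper's. You argue directly via the DuBois--Reymond/Tonelli mechanism: the weak Euler--Lagrange equation forces $L_{p,\xi}(r,h_l,(h_l)_r)$ to be absolutely continuous, and since $\sin h_l>0$ on any $[a,b]\subset(0,1]$ (by Lemmas~\ref{lempro} and~\ref{positivity}) the map $\xi\mapsto L_{p,\xi}(r,h_l(r),\xi)$ is a smooth diffeomorphism of $\real$, so $(h_l)_r$ is continuous and one bootstraps. The paper instead shows that $h_l|_{[\varepsilon,1]}$ minimizes the restricted functional among $W^{1,p}$ competitors with the induced boundary data (otherwise a cut-and-paste competitor would beat $h_l$ globally), and then invokes a classical regularity theorem from \cite[Theorems~3.7 and~4.1]{ButGia} for one-dimensional problems with analytic, strictly convex, coercive Lagrangians to obtain $h_l\in C^\infty([\varepsilon,1])$ in one stroke. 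The paper's route is shorter and yields regularity at the endpoint $r=1$ for free; yours is more self-contained but needs a separate remark at $r=1$. Two small repairs to your write-up: (i) for $p>2$ one only has $L_{p,s}\in L^{p/(p-2)}\subset L^1_{\rm loc}$ initially, not $L^\infty_{\rm loc}$, so $A\in W^{1,1}_{\rm loc}$ rather than $W^{1,\infty}_{\rm loc}$---still continuous, and the bootstrap then upgrades it; (ii) the case split on whether $(h_l)_r$ vanishes is unnecessary, since the coefficient of $h''$ in \eqref{ODE} is $(p-1)(h')^2+\sin^2 h/r^2>0$ already from $\sin h_l>0$, and invoking Theorem~\ref{th.1} at this stage would be circular.
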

\begin{proof}
  In order to improve the regularity of $h_l$, we claim that for any $\varepsilon>0$,
  $h_l$ is a minimizer of \begin{equation}\label{localmin}\int_\varepsilon^1 L_p(r,h,h_r)\end{equation} within the space of functions $h$ such that $h\in W^{1,p}(\varepsilon,1)\,, h(\varepsilon)=h_l(\varepsilon), h(1)=l, h {\rm \ nondecreasing}$. Let $$W^{1,p}_{\varepsilon,h_l}:=\left\{ h\in W^{1,p}(\varepsilon,1)\,, h(\varepsilon)=h_l(\varepsilon), h(1)=l\right\}.$$
  Then, since the Lagrangian in \eqref{localmin} is superlinear, coercive, analytic on $(\varepsilon,1)\times [h_l(\varepsilon),+\infty)\times\RR$, convex with respect to $h_r$ and with bounded derivatives with respect to $h$ and $h_r$ (by Lemmas \ref{lempro} and \ref{positivity}),  then (\cite[Theorems 3.7 and 4.1]{ButGia}) there exists $h_\varepsilon\in W^{1,p}_{\varepsilon,h_l}$ such that $$h_\varepsilon\in {\rm argmin \ }\left\{\int_\varepsilon^1 L_p(r,h,h_r) : h\in W^{1,p}_{\varepsilon,h_l}(\varepsilon,1)\,, h {\rm \ nondecreasing}\right\}.$$ Moreover, any minimizer is of class $C^\infty([\varepsilon,1])$. Observe that, once the claim is proved, then we obtain that $$h_l\in W^{1,1}(0,1)\cap C^\infty((0,1]).$$

  Suppose by contradiction that the claim does not hold. Then, defining $$h^{*}_\varepsilon(r):=\left\{\begin{array}
    {cc} h_l(r), \quad & {\rm if \ } 0\leq r\leq \varepsilon, \\ \\ h_\varepsilon(r), \quad & {\rm if \ } \varepsilon<r\leq
    1,
  \end{array}\right.$$ we would obtain that $h^{*}_\varepsilon\in W^{1,1}(0,1)\cap W^{1,p}_{loc}((0,1])$ and $$E_p(h^{*}_\varepsilon)=\int_0^\varepsilon L_p(r,h^*,(h_r^*))\,dr+\int_\varepsilon^1 L_p(r,h_\varepsilon,(h_\varepsilon)_r)\,dr< E_p(h_l)\,,$$ which contradicts $h_l$ verifying \eqref{P}.
\end{proof}

These previous Lemmas permit us to prove the following existence result.
\begin{proposition}
  \label{existence} There exists a solution $h$ of \eqref{P} such that $h\in W^{1,1}(0,1)\cap C^\infty((0,1])$.
\end{proposition}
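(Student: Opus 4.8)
The only thing still to be settled is the \emph{existence} of a minimizer of $G_{p,l}$: once a minimizer $h$ of \eqref{P} is produced, the announced regularity $h\in C^\infty((0,1])$ is exactly Lemma~\ref{lem-regularity} (which, in turn, uses Lemmas~\ref{lempro} and~\ref{positivity}). The plan is to run the direct method of the calculus of variations, the only real difficulty being the loss of coercivity of $L_p$ as $r\to 0$. First I would check that the infimum $m:=\inf\{G_{p,l}(h):h\in W^{1,1}(0,1)\cap W^{1,p}_{loc}(0,1)\}$ is finite: the map $h(r)=lr$ is admissible, satisfies $h(1)=l$, and since $\sin^2(lr)=l^2r^2+o(r^2)$ as $r\to 0$, its Lagrangian $L_p(r,lr,l)$ is comparable to $r$ near the origin, so $G_{p,l}(h)<\infty$.

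Next I would fix a minimizing sequence and put it into convenient form. The truncation and monotone rearrangement of \cite[Lemmas~1--3]{dPGM} (the same procedure used for Lemma~\ref{lempro}) replaces any finite-energy competitor by a nondecreasing one valued in $[0,l]$ without increasing $G_{p,l}$; hence I may take a minimizing sequence $(h_n)$ consisting of such functions. Since $L_p(r,s,\xi)\ge r|\xi|^p$ and $\sup_n G_{p,l}(h_n)<\infty$, for every $\delta\in(0,1)$ one gets
$$\int_\delta^1|(h_n)_r|^p\,dr\le\frac1\delta\int_\delta^1 r|(h_n)_r|^p\,dr\le\frac{1}{\delta}\,\sup_n G_{p,l}(h_n),$$
so, using also $0\le h_n\le l$, the sequence $(h_n)$ is bounded in $W^{1,p}(\delta,1)$ uniformly in $n$. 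A diagonal argument over a sequence $\delta\to 0$, the compact embedding $W^{1,p}(\delta,1)\hookrightarrow C([\delta,1])$ and monotonicity then provide a subsequence (not relabelled) and a nondecreasing $h\in W^{1,p}_{loc}((0,1])$ with $0\le h\le l$ and $h(1)=l$ such that $h_n\rightharpoonup h$ in $W^{1,p}(\delta,1)$ and $h_n\to h$ uniformly on $[\delta,1]$ for every $\delta>0$.

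Then I would pass to the limit in the energy. On $[\delta,1]\times[0,l]\times\RR$ the integrand $L_p$ is nonnegative, continuous, and convex in its last argument, because $\xi\mapsto(a\xi^2+b)^{p/2}$ is convex for all $a,b\ge 0$ and $p\ge 1$; hence by the classical weak lower semicontinuity theorem for convex integrands (see, e.g., \cite{ButGia}) together with $h_n\to h$ uniformly,
$$\int_\delta^1 L_p(r,h,h_r)\,dr\le\liminf_{n\to\infty}\int_\delta^1 L_p(r,h_n,(h_n)_r)\,dr\le\liminf_{n\to\infty}G_{p,l}(h_n)=m.$$
Letting $\delta\to 0$ and invoking monotone convergence yields $E_p(h)\le m$. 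It remains to check that $h$ is admissible, i.e.\ $h\in W^{1,1}(0,1)$: $h$ is monotone, bounded, and absolutely continuous on every $[\delta,1]$, and $h(0^+):=\lim_{r\to 0^+}h(r)$ exists by monotonicity; passing to the limit $\delta\to 0$ in the identity $\int_\delta^r h_r\,ds=h(r)-h(\delta)$ shows that $h$, extended by $h(0):=h(0^+)$, is absolutely continuous on $[0,1]$, so $h\in W^{1,1}(0,1)$ and $h$ minimizes $G_{p,l}$. Lemma~\ref{lem-regularity} then yields $h\in C^\infty((0,1])$, which completes the argument.

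The step I expect to be the crux is proving that the weak limit $h$ is still admissible, that is, that no mass of the derivatives escapes to $r=0$: since $L_p$ fails to be coercive there, a minimizing sequence need not be precompact in $W^{1,1}(0,1)$, and its limit could in principle fail to be absolutely continuous at the origin. The device that circumvents this is the monotone normalization of the minimizing sequence: it forces $0\le h\le l$, which both feeds the uniform $W^{1,p}(\delta,1)$ estimates and, once $W^{1,p}_{loc}((0,1])$ regularity of $h$ is in hand, makes membership in $W^{1,1}(0,1)$ automatic via the elementary absolute-continuity argument above.
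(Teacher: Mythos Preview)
Your proposal is correct and follows essentially the same route as the paper: normalize a minimizing sequence to be nondecreasing with values in $[0,l]$, use the bound $L_p(r,s,\xi)\ge r|\xi|^p$ to get uniform $W^{1,p}(\delta,1)$ estimates away from the origin, extract a weak limit, apply lower semicontinuity (convexity in $\xi$) on each $[\delta,1]$ and monotone convergence as $\delta\to 0$, and finish with Lemma~\ref{lem-regularity}. The only cosmetic difference is that the paper first invokes $BV$ compactness on $(0,1)$ (from the uniform $W^{1,1}$ bound $\|h_n\|_{W^{1,1}}\le 2l$) before localizing, whereas you run a diagonal argument directly and recover $h\in W^{1,1}(0,1)$ a posteriori from monotonicity and boundedness of the limit.
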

\begin{proof}
  Since the Lagrangian is not coercive in $[0,1]$, the existence of the minimizer does not follow from classical results in the literature. We follow the direct method in the calculus of variations.

  Let $\{h_n\}\subset W^{1,p}(0,1)$ be a minimizing sequence such that $h_n(1)=l$. By Lemma \ref{lempro}, we may assume that each $h_n$ is nondecreasing and $h_n([0,1])\subset [0,l]$. Then, $$\|h_n\|_{W^{1,1}(0,1)}=\|h_n\|_1+\|(h_n)_r\|_1\leq 2l.$$
  Thus, we can extract a subsequence, not relabeled, such that $h_n\to h^*\in BV(0,1)$ in $L^1(0,1)$. We also obtain that $h_n\to h$ a.e. in $[0,1]$.
  Moreover, given an $\varepsilon>0$, since $$\int_\varepsilon^1 (h_n)_r^p\,dr\leq \frac{1}{\varepsilon}\int_\varepsilon^1 L(r,h_n,(h_n)_r)\,dr\leq \frac{E_p(h_n)}{\varepsilon}<+\infty,$$then there is a subsequence of the previous subsequence such that $h_{n_\varepsilon}\to h^{**}$ in $W^{1,p}(\varepsilon,1)$ and $h^{**}(1)=l$.
  Therefore, since the whole sequence converges to $h^*$, we obtain that $h^*\in BV(0,1)\cap W^{1,p}_{loc}((0,1])$ and $h^*(1)=l$. Note that this implies that $h^*\in W^{1,1}(0,1)\cap  W^{1,p}_{loc}((0,1])$. Moreover, since the Lagrangian is a Carath\'eodory function, then $$\int_\varepsilon L(r,h^*,h_r^*)\,dr\leq\liminf_{n\to \infty}\int_\varepsilon^1 L(r,h_n,(h_n)_r\,dr\leq \liminf_{n\to \infty}E_p(h_n).$$Finally, by the monotone convergence theorem we conclude that $h^*$ is a minimizer since $$E_p(h^*)\leq \liminf_{n\to\infty} E_p(h_n).$$
  We finish the proof by applying Lemma \ref{lem-regularity}.
  \end{proof}

\section{Local behavior at $r=0$}\label{sec.lb0}

We begin with the systematic study of the steady states given as
solutions to \eqref{ODE}. In the present section we deal with their
behavior near the origin, in both cases $1<p<2$ and $p>2$.

\subsection{The case $1<p<2$}\label{subsec.loc0}

The main result in this Section is the following one.
\begin{proposition}\label{prop.loc0}
Let $p\in(1,2)$ and $h$ be a nonconstant solution to \eqref{ODE} in
an interval $(0,\e)$ for some $\e>0$, and such that $h(r)\in(0,\pi)$
for all $r\in(0,\e)$. Then $h\in C^1([0,\e))$, $h$ is monotone in a
right-neighborhood of $r=0$ and
\begin{equation}\label{lim0.ODE}
\lim\limits_{r\to0^+}h(r)=0 \quad {\rm or} \quad
\lim\limits_{r\to0^+}h(r)=\pi.
\end{equation}
\end{proposition}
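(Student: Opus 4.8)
The plan is to perform a change of variables that converts \eqref{ODE}, which is singular at $r=0$, into an autonomous (or almost autonomous) dynamical system in a half-space, and then extract the stated behavior from a local analysis of the orbits near the relevant equilibria. Concretely, I would first rewrite \eqref{ODE} using the logarithmic variable $t=\ln r$ (so $r\to0^+$ corresponds to $t\to-\infty$) together with the natural "slope" variable; a convenient choice is something like $w = r h'/\sin h$ or $v = r h'$, chosen so that the principal terms of the equation become scale invariant. Since $h(r)\in(0,\pi)$ on $(0,\e)$, $\sin h>0$ there, so such a quotient is well defined. Multiplying \eqref{ODE} through by $r^4$ and grouping, one sees that every term is homogeneous of the same degree under the scaling $r\mapsto\lambda r$, which is exactly the scaling invariance noted in the paper; this guarantees that in the variables $(t,h,v)$ (with $v$ a suitably normalized slope) the system becomes autonomous, of the form $h' = v$, $v' = F(h,v)$ with $F$ smooth wherever the coefficient $(h'^2+\sin^2h/r^2)$ is nonzero, i.e. away from $h'=0=\sin h$.

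Next I would identify the finite equilibria of this autonomous system. The only candidates compatible with $h$ staying in $(0,\pi)$ and with $r\to0$ are those where $h\to 0$ or $h\to\pi$; the boundary $\sin h=0$. So the key is to show (i) that $h(r)$ actually converges as $r\to0^+$ — it cannot oscillate indefinitely or wander — and (ii) that the only possible limits are $0$ and $\pi$, and that $h$ is eventually monotone, and $h\in C^1$ up to $0$. Step (i) I would get from a monotonicity/energy-type argument: either show directly from the ODE that $h'$ cannot change sign infinitely often as $r\to0$ (a zero of $h'$ forces, via \eqref{ODE} evaluated at that point, a definite sign of $h''$ depending on whether $h<\pi/2$ or $h>\pi/2$, which should preclude oscillation near $0$ unless $h\to\pi/2$, and then one rules out the limit $\pi/2$ separately because near $h=\pi/2$ the linearization has no decaying mode as $t\to-\infty$), or else invoke that the solution has locally finite energy $\int_0^\e L_p(r,h,h')\,dr<\infty$, which controls $\int_0 r^{1-p}\sin^2 h\,dr$ and $\int_0 r\,h'^p\,dr$ and forbids $h$ from staying bounded away from $0$ and $\pi$ while oscillating. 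Once convergence is known, step (ii): if $h(r)\to c\in(0,\pi)$ with $c\neq\pi/2$ then $\sin c\neq0$ and the term $-\sin^3h\cos h/r^4$ in \eqref{ODE} blows up like $r^{-4}$ while, heuristically, it cannot be balanced — a careful asymptotic bookkeeping (using $h-c\to0$ and $r^k h'\to$ finite limits) shows this is impossible; the case $c=\pi/2$ is excluded as indicated above. Hence $c\in\{0,\pi\}$. The $C^1$ regularity at $0$ and the precise form of $h$ near $0$ (e.g. $h(r)\sim\a r$) should then follow by linearizing the autonomous system at the equilibrium corresponding to $h=0$ (or $h=\pi$): one computes the eigenvalues, finds the relevant stable/unstable directions as $t\to-\infty$, and reads off that $h'(r)$ has a finite nonzero limit, giving $h\in C^1([0,\e))$ and eventual monotonicity.

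The main obstacle I expect is step (i)–(ii): ruling out pathological behavior — infinitely many oscillations of $h'$, or convergence to an interior value $c\in(0,\pi)\setminus\{\pi/2\}$, or to $\pi/2$ — as $r\to0^+$. The scaling reduction handles the "autonomous" structure cleanly, but the phase portrait near the relevant equilibria for $1<p<2$ is genuinely delicate (the coefficient $(p-4)/2$ and the sign of $p-3$ enter the linearization), and one must be careful that the change of variables $v=rh'/\sin h$ or similar does not itself introduce spurious singularities where $\sin h$ or $h'$ vanishes. I would therefore spend most of the effort setting up the right phase-plane coordinates so that the equilibria at $h=0$ and $h=\pi$ are hyperbolic (or at worst semi-hyperbolic with a clear stable/unstable structure), and then use a combination of the finite-energy constraint and monotonicity of $h'$ (forced by the sign of $h''$ at critical points of $h$) to trap the orbit, exactly as the paper announces it will do via "the analysis of a rather complicated phase plane associated to \eqref{ODE} after various changes of variable."
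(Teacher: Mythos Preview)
Your overall strategy---pass to the logarithmic variable, obtain an autonomous planar system from the scaling invariance, and read off the behavior at $r=0$ from the phase portrait---is exactly the paper's. But several of the concrete steps you sketch would not go through as written, and the paper's execution differs in important ways.

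First, the choice of dependent variable. You propose $v=rh'$ or $rh'/\sin h$; with either of these the system degenerates at $h=0$ (and $h=\pi$), since the coefficient $(p-1)h'^2+\sin^2h/r^2$ in front of $h''$ vanishes there. The paper instead sets $f(t)=h(e^{-t})$ and then $w(t)=\cot f(t)$, which removes all trigonometric terms and gives a \emph{rational} autonomous system in $(w,w')$ that is regular everywhere in the finite plane. The price is that the limits $h\to0,\pi$ are pushed to $w\to\pm\infty$, and the paper handles them by Poincar\'e compactification: the critical points at infinity are identified via the degree-$5$ homogeneous parts, and the relevant one (polar angle $\pi/4$) is shown to be a hyperbolic saddle, with a unique orbit entering from the plane. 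This compactification step is absent from your outline, and without it you have no clean equilibrium to linearize at when $h\to0$.

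Second, ruling out $h\to\pi/2$ and oscillation. Your sign-of-$h''$ argument at zeros of $h'$ is correct but does not by itself exclude infinitely many oscillations around $\pi/2$ (a local min below $\pi/2$ can be followed by a local max above $\pi/2$, etc.). And the finite-energy alternative you mention is not available: the proposition is stated for an arbitrary nonconstant solution of \eqref{ODE}, with no energy hypothesis. The paper's argument is cleaner: in the $(w,w')$ plane the unique finite equilibrium is the origin (corresponding to $h=\pi/2$), and for $1<p<2$ its linearization has eigenvalues with real part $(2-p)/2>0$, so it is an \emph{unstable} node; hence \emph{every} orbit leaves any neighborhood of it as $t\to\infty$ (i.e.\ $r\to0^+$), and the limit $\pi/2$ is excluded for free. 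All orbits therefore tend to infinity, and the saddle structure at $\theta=\pi/4$ forces $w\to+\infty$ along a single orbit, giving $h\to0$ (or $\pi$ by symmetry).

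Third, the $C^1$ regularity at $r=0$. Linearizing at a hyperbolic saddle at infinity only tells you $w'/w\to1$; it does not directly give $h'(0^+)$ finite. The paper proves the sharper inequality $w'(t)\ge w(t)$ for all large $t$ (by a barrier argument on the line $w=(1+\e)k$), whence $w(t)e^{-t}$ is eventually monotone and thus convergent; translating back, $\tan h(r)/r$ has a finite limit and hence so does $h'(r)$. Your plan to ``read off that $h'(r)$ has a finite nonzero limit'' from the linearization alone skips this step.
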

The proof of this proposition will be the consequence of some
suitable changes of variable and a phase plane analysis of the
resulting algebraic autonomous system. The changes of variable coincide with the ones in the particular case $p=1$ \cite{dPGM}. First,
we pass to logarithmic coordinates by letting $f(t):=h(e^{-t})$. Then, \eqref{ODE} becomes
\begin{equation}\label{ODE2}\begin{split}
(p-1)f'^2(f''+f') & +(p-3)[f'^2\sin\,f\cos\,f+f'\sin^2\,f]-f'^3 \\ & +(f''+f')\sin^2\,f-\sin^3\,f\cos\,f=0.\end{split}
\end{equation}
For $f\in(0,\pi)$, as supposed in Proposition \ref{prop.loc0}, we
make the further change
$$w(t):=\cot\,f(t),$$
in order to eliminate the trigonometric terms. We obtain the
following algebraic differential equation
\begin{equation}\label{ODE3}
w''=\frac{(1+w^2+w'^2)(2(p-1)ww'+(2-p)(1+w^2))w'}{(1+w^2)(1+w^2+(p-1)w'^2)}-w,
\end{equation}
which can be written in an equivalent form as the autonomous system
\begin{equation}\label{ODEsyst1}
\left\{\begin{array}{ll}\displaystyle w'=k, \\ \\ \displaystyle
k'=\frac{(1+w^2+k^2)(2(p-1)kw+(2-p)(1+w^2))k}{(1+w^2)(1+w^2+(p-1)k^2)}-w.\end{array}\right.
\end{equation}
The system \eqref{ODEsyst1} has a unique critical point in the
plane, the origin. The linearized system around the origin has the
matrix
$$
A(0,0)=\left(
  \begin{array}{cc}
    0 \ & 1 \\
    -1 \
     & 2-p \\
  \end{array}
\right).
$$
Thus, the point is an unstable node, the matrix $A(0,0)$ having two
complex eigenvalues with real part $(2-p)/2>0$, as it is easy to
check. Then, all the orbits of \eqref{ODEsyst1} come out from the
origin and will connect to critical points at infinity. In order to
study the critical points of \eqref{ODEsyst1} at infinity, we note that \eqref{ODEsyst1} is not polynomial, thus
the standard theory cannot be used directly. Instead, we write
\begin{equation}\label{interm14}
\begin{split}
\frac{dk}{dw}&=\frac{(1+w^2+k^2)\left[2(p-1)wk+(2-p)(1+w^2)\right]k}{k(1+w^2)(1+w^2+(p-1)k^2)}\\ &-\frac{w(1+w^2)(1+w^2+(p-1)k^2)}{k(1+w^2)(1+w^2+(p-1)k^2)}=\frac{Q(w,k)}{P(w,k)},
\end{split}
\end{equation}
where $Q(w,k)$, $P(w,k)$ are both polynomials of degree 5. Thus, we
can follow the strategy indicated in \cite[pp. 270-271]{Pe}. We
let
$$
P(w,k)=P_1(w,k)+\ldots+P_5(w,k), \quad
Q(w,k)=Q_1(w,k)+\ldots+Q_5(w,k),
$$
with $P_j$, $Q_j$ homogeneous polynomials in $(w,k)$ of degree $j$,
$1\leq j\leq5$. Then, Theorem 1 in \cite[p. 271]{Pe} ensures that the
critical points at infinity occur at the polar angles $\theta$
solving the equation
\begin{equation}\label{critinf1}
\cos\theta Q_5(\cos\theta,\sin\theta)-\sin\theta
P_5(\cos\theta,\sin\theta)=0.
\end{equation}
Let us notice that, in our case,
\begin{equation*}
\begin{split}
&Q_5(w,k)=k(w^2+k^2)\left[2(p-1)wk+(2-p)w^2\right]-w^3(w^2+(p-1)k^2),\\
&P_5(w,k)=kw^2(w^2+(p-1)k^2).
\end{split}
\end{equation*}
Hence \eqref{critinf1} becomes
\begin{equation}\label{critinf2}
\begin{split}
& \cos\theta\sin\theta\left[2(p-1)\sin\theta\cos\theta+(2-p)\cos^2\theta\right]\\ &-\cos^4\theta\left[\cos^2\theta+(p-1)\sin^2\theta\right]=\cos^2\theta\sin^2\theta\left[\cos^2\theta+(p-1)\sin^2\theta\right].
\end{split}
\end{equation}
We thus get six critical points at infinity, corresponding to polar
angles
\begin{equation}\label{polar}
\theta\in\left\{\frac{\pi}{2},\frac{3\pi}{2},\frac{\pi}{4},\frac{5\pi}{4},\arccos\left(-\frac{p-1}{\sqrt{p^2-2p+2}}\right),-\arccos\left(\frac{p-1}{\sqrt{p^2-2p+2}}\right)\right\}.
\end{equation}
Since $w'=k$, we cannot have at the same time $k\to\infty$ and
$w\to-\infty$ or $w\to\infty$ and $k\to-\infty$ as $t\to\infty$,
that is, the product $\sin\theta\cos\theta$ must be nonnegative at
infinity. We thus reduce, also by symmetry, our analysis to the
study of the two points corresponding to polar angles $\theta=\pi/4$
and $\theta=\pi/2$.

We start with the local analysis of the system \eqref{ODEsyst1} near
the critical point with $\theta=\pi/2$, which is more involved. By
Theorem 2 in \cite[pp. 272-273]{Pe}, the flow in a neighborhood of
this point is topologically equivalent to the flow near the origin
$(x,z)=(0,0)$ of the following system
\begin{equation*}
\begin{split}
&\pm
x'=xz^5Q\left(\frac{x}{z},\frac{1}{z}\right)-z^5P\left(\frac{x}{z},\frac{1}{z}\right),\\
&\pm z'=z^6Q\left(\frac{x}{z},\frac{1}{z}\right),
\end{split}
\end{equation*}
the signs plus or minus being determined by the flow on the equator
of $S^{2}$ as indicated in Theorem 1, \cite[Section 3.10]{Pe}. After
straightforward calculations, the system above can be written in the
form
\begin{equation}\label{critsyst1}
\begin{split}
&\pm x'=(p-1)(x^2-z^2)+{\rm higher \ order \ terms},\\
&\pm z'=2(p-1)xz+{\rm higher \ order \ terms}.
\end{split}
\end{equation}
We will use next the theory developed in
\cite{LPR96} to show that indeed the system \eqref{critsyst1} is
topologically equivalent locally near $(x,z)=(0,0)$ to the
homogeneous system
\begin{equation}\label{critsyst2}
\begin{split}
&\pm x'=(p-1)(x^2-z^2),\\
&\pm z'=2(p-1)xz.
\end{split}
\end{equation}
In order to do this, we begin by passing in \eqref{critsyst2} to
polar coordinates $(r,\theta)$
so that, after easy calculations, the system becomes
$$
\frac{dr}{dt}=r^2f(\theta), \quad \frac{d\theta}{dt}=rg(\theta),
$$
where
\begin{equation*}
f(\theta):=(p-1)\cos\theta, \quad g(\theta):=(p-1)\sin\theta.
\end{equation*}
Introducing a new time $s$ via $ds/dt=r$ and making a further change
of variable $\varrho:=r/(1+r)$, we transform \eqref{critsyst2} into
the topologically equivalent system
\begin{equation}\label{critsyst3}
\begin{split}
&\pm \varrho'=(p-1)\varrho(1-\varrho)\cos\theta,\\
&\pm \theta'=(p-1)\sin\theta,
\end{split}
\end{equation}
where derivatives are taken with respect to $s$, and
$$(\varrho,\theta)\in
D:=\{(\varrho,\theta):0\leq\varrho<1,\theta\in[0,2\pi)\}.$$ This
system has two critical points on $\partial D$, namely
$(\varrho,\theta)\in\{(1,0),(1,\pi)\}$. The linearization of
\eqref{critsyst3} near these two critical points has the matrices
$$
M(1,0)=\left(
         \begin{array}{cc}
           -(p-1) & 0 \\
           0 & (p-1) \\
         \end{array}
       \right), \quad M(1,\pi)=\left(
                                 \begin{array}{cc}
                                   p-1 & 0 \\
                                   0 & -(p-1) \\
                                 \end{array}
                               \right),
$$
thus both points are hyperbolic (two saddles). We are then in the
situation described in Theorems A and C of \cite{LPR96}, and from
the latter theorem we deduce that \eqref{critsyst1} is topologically
equivalent near the origin to \eqref{critsyst2}, which is
furthermore topologically equivalent, through a simple time
rescaling $\tau=(p-1)s$, to the following
\begin{equation}\label{critsyst4}
\begin{split}
&\pm x'=x^2-z^2,\\
&\pm z'=2xz.
\end{split}
\end{equation}
It is easily seen that the
origin is an elliptic point (see \cite{Ar87} for a proof). Coming back to the original system, it
follows that the orbits coming out or entering the point with
$\theta=\pi/2$ describe elliptic cycles.

We study now the second critical point at infinity, with polar angle
$\pi/4$. Since this point is hyperbolic (as we shall see), we pass
to the Poincaré sphere using an equivalent approach used for example
in \cite{Hu}. We let
$$
w=\frac{\varrho}{1-\varrho}\cos\Phi, \quad
k=\frac{\varrho}{1-\varrho}\sin\Phi,
$$
whence
$$
\varrho=\frac{\sqrt{w^2+k^2}}{1+\sqrt{w^2+k^2}}=1-\frac{1}{1+\sqrt{w^2+k^2}},
\quad \Phi=\arctan\frac{k}{w}.
$$
By a direct differentiation, we find that
\begin{equation}\label{interm1}
\varrho'=(1-\varrho)^2(w'\cos\Phi+k'\sin\Phi), \quad
\Phi'=\frac{wk'-kw'}{w^2+k^2}.
\end{equation}
Then, we arrive to the
following equations
\begin{equation}\label{ODEsyst.infty}
\left\{\begin{array}{ll}\varrho'=\frac{\varrho(1-\varrho)\left[(1-\varrho)^2+\varrho^2\right]\left[2(p-1)\varrho^2\cos\Phi\sin\Phi+(2-p)((1-\varrho)^2+\varrho^2\cos^2\Phi)\right]\sin^2\Phi}
{\left[(1-\varrho)^2+\varrho^2\cos^2\Phi\right]\left[(1-\varrho)^2+\varrho^2\cos^2\Phi+(p-1)\varrho^2\sin^2\Phi\right]},\\
\\
\Phi'=\frac{\left[(1-\varrho)^2+\varrho^2\right]\left[2(p-1)\varrho^2\cos\Phi\sin\Phi+(2-p)((1-\varrho)^2+\varrho^2\cos^2\Phi)\right]\sin\Phi\cos\Phi}
{\left[(1-\varrho)^2+\varrho^2\cos^2\Phi\right]\left[(1-\varrho)^2+\varrho^2\cos^2\Phi+(p-1)\varrho^2\sin^2\Phi\right]}-1.\end{array}\right.
\end{equation}
In this setting, our critical point becomes $P_1=(1,\pi/4)$. The
linearization of the system \eqref{ODEsyst.infty} near $P_1$ has the
matrix
$$
M\left(1,\frac{\pi}{4}\right)=\left(
             \begin{array}{cc}
               -1 \ & 0 \\
               0 \ & 2 \\
             \end{array}
           \right),
$$
thus this point is a saddle. There exists a special orbit of
\eqref{ODEsyst1} going into $P_1$ and coming from the interior of
the plane. All other orbits are included in the boundary
$\varrho=1$ and are not of interest to us. The orbit entering $P_1$
and coming from the plane do this in infinite time and its behavior
is
\begin{equation}\label{interm3}
\lim\limits_{t\to\infty}\frac{w'(t)}{w(t)}=1, \quad
\lim\limits_{t\to\infty}w(t)=\infty.
\end{equation}
The solution corresponding to this orbit is increasing for $t>t_0$
large and it satisfies \eqref{interm3}.

\begin{proof}[Proposition \ref{prop.loc0}]
We analyze the special orbit entering $P_1$ in the previous phase
plane. Since $\lim\limits_{t\to\infty}w(t)=\infty$, then, exchanging
if necessary $h$ by $\pi-h$, we may assume that
$\lim\limits_{r\to0}h(r)=0$. To complete the proof, it suffices to
show that there exists $\lim\limits_{r\to0}h'(r)\in\real$.

In the following step, we want to prove that, along this orbit, we
have $w'(t)=k(t)\geq w(t)$ for $t$ sufficiently large. To do this,
assume by contradiction that there exists some $\e>0$ and some
$t_0\in(0,\infty)$ such that $w(t_0)=(1+\e)k(t_0)$. Define
$$l(t):=w(t)-(1+\e)k(t).$$ Thus $l(t_0)=0$. Using
\eqref{ODEsyst1} and the equality $w(t_0)=(1+\e)k(t_0)$, by
straightforward calculations we obtain (everything being taken at
$t=t_0$ that we omit from the notation for simplicity)
\begin{equation}\label{major}
\begin{split}
l'&=k\left[(1+\e)^2+1-(1+\e)\right.\\&\left.\times\frac{\left[(1+k^2+(1+\e)^2k^2)\right]\left[2(p-1)(1+\e)k^2+(2-p)(1+(1+\e)^2k^2)\right]}
{\left[(1+(1+\e)^2k^2)\right]\left[1+(1+\e)^2k^2+(p-1)k^2\right]}\right]\\
&=k\Psi(k,p,\e).
\end{split}
\end{equation}
In order to bound from below $\Psi(k,p,\e)$ above, we first notice
easily that $\frac{\partial}{\partial p}\Psi(k,p,\e)\geq0$, whence
$$\Psi(k,p,\e)\geq\Psi(k,1,\e)=1+\e+\e^2-\frac{(1+\e)k^2}{1+(1+\e)^2k^2}.$$
But the right hand side of the last inequality is decreasing with
$k$, therefore we can take the limit as $k\to\infty$ and get
$$\Psi(k,p,\e)\geq1+\e+\e^2-\frac{1}{1+\e}=\frac{\e(2+2\e+\e^2)}{1+\e}>0,$$
which implies
$$l'(t_0)\geq\frac{k(t_0)\e(2+2\e+\e^2)}{1+\e}>0,$$ hence
$w(t)\geq(1+\e)k(t)$ for any $t\geq t_0$. This is a contradiction
with the fact that $w(t)/k(t)\to 1$ as $t\to\infty$. Thus
$k(t)=w'(t)\geq w(t)$.

We next follow as in \cite{dPGM} by writing
\begin{equation}\label{interm4}
h'(r)=\frac{rh'(r)}{\sin h(r)\cos h(r)}\frac{\tan
h(r)}{r}\cos^2h(r).
\end{equation}
On one hand
\begin{equation}\label{interm5}
\lim\limits_{r\to 0^+}\frac{rh'(r)}{\sin h(r)\cos
h(r)}=\lim\limits_{t\to\infty}-\frac{f'(t)}{\sin f(t)\cos
f(t)}=\lim\limits_{t\to\infty}\frac{w'(t)}{w(t)}=1.
\end{equation}
On the other hand, we have
$$
\frac{d}{dt}(w(t)e^{-t})=e^{-t}(w'(t)-w(t))\geq0,
$$
for any $t$ sufficiently large, by the previous step. This implies
\begin{equation}\label{interm6}
\lim\limits_{r\to 0^+}\frac{\tan
h(r)}{r}=\lim\limits_{t\to\infty}\frac{1}{w(t)e^{-t}}=C\in[0,\infty).
\end{equation}
From \eqref{interm4}, \eqref{interm5}, \eqref{interm6} and the fact
that $h(r)\to0$ as $r\to 0^+$, we deduce that $h'(r)\to C>0$ as
$r\to 0^+$, which shows that $h\in C^1([0,\e))$ for some $\e>0$ small.
The monotonicity in a right-neighborhood of 0 is now obvious from
the previous analysis.
\end{proof}

\subsection{The case $p>2$}\label{subsec.loc02}

In this case, the behavior near the origin is slightly more
complicate. More precisely, we prove
\begin{proposition}\label{prop.loc02}
Let $p>2$ and $h$ be a nonconstant solution to \eqref{ODE} in an
interval $(0,\e)$ for some $\e>0$, and such that $h(r)\in(0,\pi)$
for all $r\in(0,\e)$. Then $h\in C^1([0,\e))$, $h$ is monotone in a
right-neighborhood of $r=0$ and
\begin{equation}\label{lim0.ODE}
\lim\limits_{r\to 0^+}h(r)=0 \quad {\rm or} \quad
\lim\limits_{r\to 0^+}h(r)=\frac{\pi}{2} \quad {\rm or}
\lim\limits_{r\to 0^+}h(r)=\pi.
\end{equation}
\end{proposition}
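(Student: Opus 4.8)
The plan is to repeat, with $p>2$ in place of $1<p<2$, the phase–plane analysis used to prove Proposition~\ref{prop.loc0}. I would perform the same two changes of variable, $f(t):=h(e^{-t})$ and then $w(t):=\cot f(t)$ (legitimate since $h(r)\in(0,\pi)$), which lead to exactly the same autonomous system \eqref{ODEsyst1}; the skeleton of the argument — local study of the origin, detection of the critical points at infinity through the top–degree parts of $Q$ and $P$, and the local portrait at each of them — is untouched, and only the \emph{outcome} of these local analyses changes.

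First I would revisit the origin. Its linearization $A(0,0)$ now has trace $2-p<0$ and determinant $1$, so its eigenvalues have real part $(2-p)/2<0$: the origin is \emph{asymptotically stable} — a stable focus for $2<p<4$, a stable improper node for $p=4$, and a stable node for $p>4$. In sharp contrast with the case $p<2$, there is then a whole neighbourhood of orbits converging to the origin as $t\to+\infty$; along each such orbit $w(t)=\cot f(t)\to 0$, hence $f(t)\to\pi/2$ and $h(r)\to\pi/2$ as $r\to 0^+$. This produces the new third alternative in \eqref{lim0.ODE}. Along these orbits $w$ and $k=w'$ decay exponentially in $t$, which, through $h'(r)=-f'(t)e^{t}$ and the identity \eqref{interm4}, governs the behaviour of $h$ and $h'$ near $r=0$; the node regime $p\ge 4$ yields monotone solutions, while the focus regime $2<p<4$ (spiralling orbits) requires extra care (see the last paragraph).

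Next I would treat the critical points at infinity. The reduction to $dk/dw=Q(w,k)/P(w,k)$ of \eqref{interm14} produces the very same polynomials $P,Q$ as before (only the coefficient $2-p$ has changed sign), so \eqref{critinf1} again yields the six polar angles \eqref{polar}, and the constraint $w'=k$, which forces $\sin\theta\cos\theta\ge 0$ at infinity, again leaves only $\theta=\pi/4$ and $\theta=\pi/2$ to analyse. For $\theta=\pi/2$, the Lyapunov-type reduction \eqref{critsyst1}--\eqref{critsyst4} has leading part $\pm x'=(p-1)(x^2-z^2)$, $\pm z'=2(p-1)xz$ with $p-1>0$, so the argument through \cite{LPR96} carries over verbatim and the point stays elliptic; orbits reaching it correspond to $h\to 0$ or $h\to\pi$. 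For $\theta=\pi/4$ I would recompute the linearization of the blown-up system \eqref{ODEsyst.infty} at $P_1=(1,\pi/4)$, verify it is still a hyperbolic saddle, and isolate the unique orbit entering from the interior of the plane, along which $w'/w\to 1$ and $w\to\infty$, i.e.\ $h(r)\to 0$ (or $\pi$); the $C^1$ behaviour up to $r=0$ along it follows as in \eqref{interm4}--\eqref{interm6} once the inequality $k\ge w$ is re-established (the lower bound on $\Psi$ obtained in the case $p<2$ by monotonicity in $p$ has to be re-derived for $p>2$, e.g.\ by estimating $\Psi$ directly).

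Assembling the three local pictures: as $t\to+\infty$ every orbit of \eqref{ODEsyst1} either converges to the origin — so $h(r)\to\pi/2$ — or connects to the infinite critical point $\theta=\pi/4$ or $\theta=\pi/2$ — so $h(r)\to 0$ or $h(r)\to\pi$ — which gives \eqref{lim0.ODE}; the monotonicity in a right-neighbourhood of $r=0$ and the regularity up to $r=0$ then follow as in Proposition~\ref{prop.loc0}. I expect the main obstacle to be the behaviour at the origin when $2<p<4$: the focus structure forces some solutions to spiral toward $\pi/2$, and pinning down their precise behaviour as $r\to 0^+$ — in particular isolating the orbits that approach the origin along the strong stable direction — is the crux; re-running the machinery of \cite{Pe,LPR96} at $\theta=\pi/4$ for $p>2$ and re-confirming the elliptic character at $\theta=\pi/2$ are the remaining technical hurdles.
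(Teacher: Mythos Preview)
Your plan is essentially the paper's own argument: perform the same changes of variable, re-examine the phase plane of \eqref{ODEsyst1}, observe that for $p>2$ the origin becomes an attractor (yielding the new alternative $h\to\pi/2$), and keep the local analysis at the points at infinity unchanged. The paper's proof is in fact even more terse than your outline: it simply records that the origin is now asymptotically stable, that the analysis at infinity is identical, and that therefore an orbit either enters the saddle $P_1$ at $\theta=\pi/4$ (giving $h\to 0$ or $\pi$, with $C^1$ extension exactly as in the $1<p<2$ case) or enters the origin (giving $h\to\pi/2$). One small misstep in your plan: the elliptic point at $\theta=\pi/2$ is not associated with $h\to 0$ or $h\to\pi$; the orbits it organizes correspond (see the later Section~\ref{sec.cp}) to solutions with $h\to\pm\infty$ as $r\to 0^+$, and are therefore excluded outright by the hypothesis $h(r)\in(0,\pi)$. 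So the dichotomy is really just ``orbit enters $P_1$'' versus ``orbit enters $O$'', and you need not allocate any work to $\theta=\pi/2$ beyond noting this exclusion.

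Your worry about the focus regime $2<p<4$ is well placed and, notably, the paper's short proof does not address it: it establishes the trichotomy \eqref{lim0.ODE} but says nothing further about monotonicity or $C^1$ extension when $h\to\pi/2$. In fact, Theorem~\ref{th.3}(d) and the global phase-plane discussion in Section~\ref{subsec.cp2} explicitly state that solutions with $h(0+)=\pi/2$ ``may oscillate around $\pi/2$'' on an interval $(0,r_0)$, which confirms that the blanket monotonicity claim in Proposition~\ref{prop.loc02} cannot hold for those orbits (and a quick rate computation shows $h'(r)$ need not stay bounded either when $2<p<4$). So your plan is the right one for the trichotomy, it reproduces the paper's argument, and your reservations about the remaining two claims in the $h\to\pi/2$ case are justified rather than a defect of your approach: the paper itself does not supply that part, and its later results indicate the statement should be read as applying fully only to the $h\to 0$ and $h\to\pi$ alternatives.
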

\begin{proof}
We do the same changes of variable as in Subsection
\ref{subsec.loc0} until arriving to the system \eqref{ODEsyst1}. We
study the phase plane associated to \eqref{ODEsyst1}; the critical
points are the same as for $1<p<2$, that is, a unique critical point
in the plane (the origin) and the six critical points at infinity
corresponding to the polar angles $\theta$ given in \eqref{polar}.

Since now $2-p<0$, the origin becomes in this case a stable node. By
standard theory, this point is asymptotically stable \cite[Section
2.9]{Pe}, that is, there exists a ball $B(0,\delta)$
for some $\delta>0$ such that all the orbits entering $B(0,\delta)$
end up at $(w,k)=(0,0)$. The local analysis of the critical points
at infinity is the same as in Subsection \ref{subsec.loc0}. By
performing the same analysis as in the proof of Proposition
\ref{prop.loc0}, we find the following two different types of orbits:

\noindent $\bullet$ either the orbit entering the saddle point $P_1$
of polar angle $\theta=\pi/4$ and coming from the plane, that in
initial variables means $\lim\limits_{r\to 0^+}h(r)\in\{0,\pi\}$ as in
the proof of Proposition \ref{prop.loc0};

\noindent $\bullet$ or the orbits entering the attractor $O$, meaning
that $w(t)\to0$ as $t\to\infty$. In initial variables, it means that
$\lim\limits_{r\to 0^+}h(r)=\pi/2$, ending the proof.
\end{proof}

\section{Uniqueness of the minimizer}\label{sec.unique}

We prove in this section the uniqueness of the smooth minimizer.
More precisely:
\begin{proposition}\label{prop.unique}
Let $l\in(0,\pi/2]$. Then there exists a unique positive,
non-constant and non-decreasing solution to the problem
\begin{equation}\label{problem1}
\left\{\begin{array}{ll}\begin{split} \displaystyle(p-1)h'^{2}h''
&+(p-3)\left[h'^2\frac{\sin h\cos
h}{r^2}-h'\frac{\sin^{2}h}{r^3}\right]\\&+\frac{h'^3}{r}+h''\frac{\sin^{2}h}{r^2}-\frac{\sin^{3}\cos
h}{r^4}=0, \quad r\in(0,1),\end{split} \\\\
h(1)=l.\end{array}\right.
\end{equation}
Furthermore, $h$ is increasing.
\end{proposition}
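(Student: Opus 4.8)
The plan is to obtain the statement by combining the variational construction of Section~\ref{sec.var} with the phase-plane classification carried out in Section~\ref{sec.lb0}. For \emph{existence}, I would take the minimizer $h_l$ of Proposition~\ref{existence}: it is non-decreasing by Lemma~\ref{lempro}, positive on $(0,1]$ by Lemma~\ref{positivity}, and of class $C^\infty((0,1])$; being a critical point of $E_p$ it satisfies on $(0,1)$ the classical Euler--Lagrange equation of $L_p$, that is, exactly \eqref{ODE}. It is non-constant: a constant solution of \eqref{ODE} would force $\sin h_l\cos h_l\equiv 0$, hence $l=\pi/2$ and $h_l\equiv\pi/2$, but that profile is strictly beaten in energy by $r\mapsto 2\arctan\!\big(\tan(l/2)\,r\big)$, by an energy comparison analogous to the proof of Lemma~\ref{positivity}. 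So $h_l$ is a solution with all the required properties, and it only remains to prove uniqueness and strict monotonicity.

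Let $h$ be \emph{any} positive, non-constant, non-decreasing solution of \eqref{problem1}. Since $h$ is non-decreasing and $0<h\le l\le\pi/2<\pi$ on $(0,1]$, Proposition~\ref{prop.loc0} (if $1<p<2$) or Proposition~\ref{prop.loc02} (if $p>2$) applies and gives $h\in C^1([0,1))$ together with $\lim_{r\to0^+}h(r)\in\{0,\pi\}$, resp.\ $\in\{0,\pi/2,\pi\}$; the value $\pi$ is excluded by $h\le\pi/2$, and the value $\pi/2$ (possible only when $p>2$) is excluded because monotonicity would then force $h\equiv\pi/2$, against non-constancy. Hence $h(0)=0$. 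Next I would prove that $h$ is \emph{strictly} increasing on $[0,1]$: otherwise $h$ has an interior critical point, or is locally constant on a subinterval of $(0,1)$, and at such a point $h'=h''=0$, so \eqref{ODE} gives $\sin^3h\cos h=0$, forcing $h=\pi/2$ and $l=\pi/2$ there. Writing \eqref{ODE} in normal form $h''=F(r,h,h')$ — which is licit since the coefficient $(p-1)h'^2+r^{-2}\sin^2h$ of $h''$ is strictly positive along the whole graph of $h$ over $(0,1)$, where $\sin^2h>0$, and $F$ is real-analytic there — uniqueness for this second-order ODE would force $h$ to coincide with the constant $\pi/2$ on all of $(0,1)$, contradicting $h(0)=0$. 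This establishes the last assertion of the Proposition for every such $h$.

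Now I would read off uniqueness from the phase plane of Section~\ref{sec.lb0}. In the variables $f(t)=h(e^{-t})$, $w(t)=\cot f(t)$, the pair $(w,w')$ solves \eqref{ODEsyst1} on $t\in[0,\infty)$, and the strict monotonicity of $h$ together with $h(0^+)=0$, $h(1)=l$ makes $w$ strictly increasing on $[0,\infty)$ with $w(t)\to+\infty$. In particular the orbit of $(w,w')$ neither loops around the elliptic point at polar angle $\theta=\pi/2$ nor stays bounded, so by the description of the critical points at infinity in Section~\ref{sec.lb0} it must be (a piece of) the unique orbit $\gamma^*$ that comes from the interior of the plane into the saddle $P_1$ with $\theta=\pi/4$. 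By the invariance of \eqref{ODE} under $r\mapsto\a r$, the solutions whose orbit lies on $\gamma^*$ are exactly the dilates $r\mapsto h^*(\a r)$, $\a>0$, of one fixed representative $h^*$; thus $h(r)=h^*(\a r)$ on $(0,1]$ for some $\a=\a(h)>0$, and the boundary condition reads $h^*(\a)=h(1)=l$. Applying this to two such solutions $h_1,h_2$ with parameters $\a_1\le\a_2$: each $h^*$-piece is strictly increasing (because each $h_i$ is), so $h^*$ is strictly increasing on $(0,\a_2]$ and $h^*(\a_1)=l=h^*(\a_2)$ forces $\a_1=\a_2$, i.e.\ $h_1=h_2$. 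Applying this to the minimizer $h_l$ finishes the proof.

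The main obstacle I anticipate lies in the last step and is twofold: first, justifying rigorously that the orbit of an \emph{arbitrary} non-decreasing solution lands precisely on the saddle $P_1$ and on no other critical point at infinity — here the monotonicity of $w$ (hence the absence of an elliptic cycle and of an oscillating $\omega$-limit, by a Poincar\'e--Bendixson argument on the Poincar\'e disk) is what makes the reduction to $\gamma^*$ possible; and second, the rigidity argument excluding a plateau of $h$ at the value $\pi/2$, which crucially uses that \eqref{ODE} is non-degenerate — solvable for $h''$ with analytic right-hand side — exactly because $\sin^2h>0$ along any positive solution bounded away from $0$ and $\pi$.
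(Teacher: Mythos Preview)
Your proposal is correct and follows a genuinely different route from the paper's. After the common first step of strict monotonicity --- which the paper obtains slightly differently, by computing $f''(t_0)=\sin f(t_0)\cos f(t_0)>0$ directly from \eqref{ODE2} at a putative critical point, rather than via ODE uniqueness at $h=\pi/2$ --- the paper does \emph{not} invoke the uniqueness of the orbit entering $P_1$. Instead it introduces the further dependent variable $g(w)=w/w'(t(w))$, derives the first-order equation \eqref{ODE4} for $g$, and proves uniqueness by an analytic comparison: for two solutions $g_1,g_2$ (both satisfying $g_i(w)\to1$ as $w\to\infty$, by Section~\ref{sec.lb0}) a differential inequality yields $(g_1-g_2)(w)\ge c\,(w/w_0)^{2(1-\varepsilon)}$, contradicting $g_i\to1$. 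Your argument instead reads uniqueness directly off the saddle structure at $P_1$ together with the scaling invariance of \eqref{ODE}: since solutions on the same orbit of the autonomous system \eqref{ODEsyst1} differ by a time shift --- i.e.\ a dilation in $r$ --- the boundary condition $h^*(\alpha)=l$ and strict monotonicity of $h^*$ on $(0,\alpha_2]$ pin down $\alpha$. This is shorter and more geometric; the paper's estimate is more laborious but self-contained and has the advantage of being reused almost verbatim in Section~\ref{subsec.cp1} (Step~2) to prove the analogous uniqueness statement at the critical point $P_2$, where the asymptotic is $g\to p-1$. Note finally that both arguments rest on the same input from Section~\ref{sec.lb0}, namely that the orbit of any such solution enters $P_1$ (equivalently $w'/w\to1$): the paper uses this as the asymptotic $g\to1$, you use the one-dimensional stable manifold of the saddle directly. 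The obstacle you flag --- excluding other $\omega$-limits via Poincar\'e--Bendixson and the purely elliptic nature of the point at $\theta=\pi/2$ --- is real but manageable, and is in fact treated with the same brevity in the paper's own proof of Proposition~\ref{prop.loc0}.
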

\begin{proof}
The proof will be divided into several steps.

\medskip

\noindent \textbf{Step 1. Monotonicity.} We pass again to
logarithmic coordinates as in the previous section by letting
$f(t)=h(e^{-t})$. Then, \eqref{problem1} transforms into
\eqref{ODE2} posed for $t\in(0,\infty)$, with boundary conditions
$$
f(0)=l\in(0,\pi/2], \quad \lim\limits_{t\to\infty}f(t)=0,
$$
with $f$ positive, non-constant and non-increasing. We next show
that, under these conditions, any solution $f$ to \eqref{ODE2} is in
fact decreasing. Assume by contradiction that there exists
$t_0\in(0,\infty)$ such that $f'(t_0)=0$. If $f(t_0)=\pi/2$, by
standard ODE arguments of uniqueness, we have $f\equiv\pi/2$,
contradiction. Thus $0<f(t_0)<\pi/2$. Replacing $t=t_0$ in
\eqref{ODE2}, we get
$$
f''(t_0)=\sin f(t_0)\cos f(t_0)=\frac{1}{2}\sin2f(t_0)>0,
$$
therefore $f$ is increasing in a small right-neighborhood of $t_0$,
contradiction. Thus $f'(t)<0$ for any $t\in(0,\infty)$, and coming
back to initial variables, it follows that $h$ is increasing.

\medskip

\noindent \textbf{Step 2. New change of variable.} For
$f\in(0,\pi/2)$, we perform again the change of variable
$$
w(t):=\cot f(t),
$$
thus arriving to Eq. \eqref{ODE3}, with boundary condition
$w(0)=\cot l$. Since we know from the analysis in the previous
section that $w$ is monotone for $t$ sufficiently large, we can
define its inverse $t(w)$ such that $w(t(w))=w$, and we make the
further change of variable
\begin{equation}\label{ch3}
g(w):=\frac{w}{w'(t(w))}.
\end{equation}
Differentiating with respect to $w$, we get
$$
w''(t(w))=\frac{g(w)-wg'(w)}{g(w)^2t'(w)}=\frac{wg(w)-w^2g'(w)}{g(w)^3}.
$$
We substitute these formulas in Eq. \eqref{ODE3} and obtain
\begin{equation}\label{ODE4bis}
\begin{split}
& g(w)-wg'(w)\\&=\frac{\left[g(w)^2(1+w^2)+w^2\right]\left[2(p-1)w^2+(2-p)g(w)(1+w^2)\right]g(w)}{(1+w^2)\left[g(w)^2(1+w^2)+(p-1)w^2\right]}-g(w)^3.
\end{split}
\end{equation}
After performing some technical operations, we write \eqref{ODE4bis}
in a more suitable form:
\begin{equation}\label{ODE4}
g'(w)=\frac{1}{w}\left[g(w)^3-(2-p)g(w)^2+g(w)-F(w,g(w))\right],
\end{equation}
where
\begin{equation}\label{ODE4aux}
F(w,g):=\frac{w^2g\left[2(p-1)(1+w^2)g^2+(2-p)^2(1+w^2)g+2(p-1)w^2\right]}{(1+w^2)\left[g^2(1+w^2)+(p-1)w^2\right]}.
\end{equation}

\medskip

\noindent \textbf{Step 3. Comparison estimates.} Assume by
contradiction that uniqueness as stated in Proposition
\ref{prop.unique} does not hold and pick two different positive
solutions $g_1$ and $g_2$ of \eqref{ODE4}. Adapting ideas from the
case $p=1$ \cite{dPGM}, our next goal is to estimate the difference
$g_1-g_2$. More precisely:
\begin{equation*}
\begin{split}
(g_1-g_2)'(w)&=\frac{1}{w}\left[g_1(w)^3-g_2(w)^3-(2-p)(g_1(w)^2-g_2(w)^2)\right.\\&\left.+(g_1-g_2)(w)-F(w,g_1(w))+F(w,g_2(w))\right]\\
&=\frac{(g_1-g_2)(w)}{w}\left[\left(g_1(w)-\frac{2-p}{2}\right)^2+\left(g_2(w)-\frac{2-p}{2}\right)^2\right.\\&\left.+(g_1g_2)(w)+1-\frac{(2-p)^2}{2}\right]-\frac{1}{w}\left[F(w,g_1(w))-F(w,g_2(w))\right].
\end{split}
\end{equation*}
Taking into account that
$\lim\limits_{w\to\infty}g_1(w)=\lim\limits_{w\to\infty}g_2(w)=1$,
by the analysis in Section \ref{sec.lb0}, we further obtain
\begin{equation}\label{interm7}
\begin{split}
&\frac{(g_1-g_2)'(w)}{(g_1-g_2)(w)}\\&=\frac{1}{w}\left[\left(g_1(w)-\frac{2-p}{2}\right)^2+\left(g_2(w)-\frac{2-p}{2}\right)^2+(g_1g_2)(w)+1-\frac{(2-p)^2}{2}\right]\\
&-\frac{1}{w}\frac{F(w,g_1(w))-F(w,g_2(w))}{(g_1-g_2)(w)}\geq\frac{C}{w}-\frac{1}{w}\frac{F(w,g_1(w))-F(w,g_2(w))}{g_1(w)-g_2(w)},
\end{split}
\end{equation}
for $w$ sufficiently large, where the constant $C$ satisfies
\begin{equation}\label{interm8a}
C\sim\left(\frac{p}{2}\right)^2+\left(\frac{p}{2}\right)^2+1+1-\frac{(2-p)^2}{2}=2p.
\end{equation}
In order to proceed with the integration of the differential
inequality \eqref{interm7}, we only need to estimate conveniently
$$
\frac{F(w,g_1(w))-F(w,g_2(w))}{g_1(w)-g_2(w)}.
$$
This will be our last step in the proof.

\medskip

\noindent \textbf{Step 4. Lipschitz estimate and end of proof.} In
order to estimate the last quotient, we differentiate $F$ defined in
\eqref{ODE4aux} with respect to its second variable $g$:
\begin{equation*}
\begin{split}
\frac{\partial F(w,g)}{\partial g}=\frac{6(p-1)w^2(1+w^2)g^2+2(2-p)^2w^2(1+w^2)g+2(p-1)w^4g}{(1+w^2)\left[(1+w^2)g^2+(p-1)w^2\right]}\\
-\frac{2g\left[2(p-1)w^2(1+w^2)g^3+(2-p)^2w^2(1+w^2)g^2+2(p-1)w^4g\right]}{\left[(1+w^2)g^2+(p-1)w^2\right]^2}.
\end{split}
\end{equation*}
Letting $w\to\infty$ and recalling that
$\lim\limits_{w\to\infty}g(w)=1$, we can compute
\begin{equation}\label{interm9}
\begin{split}
\lim\limits_{w\to\infty}\frac{\partial F(w,g(w))}{\partial g}=2(p-1).
\end{split}
\end{equation}
Fix $\e\in(0,1)$. Then, by \eqref{interm9} we can find $w_0=w_0(\e)$
large enough such that
\begin{equation*}
\left|\frac{F(g_1)-F(g_2)}{g_1-g_2}(w)\right|\leq 2(p-1)+\e,
\end{equation*}
for any $w\geq w_0(\e)$, and at the same time
\begin{equation*}
\begin{split}
C(w)&:=\left[g_1(w)-\frac{2-p}{2}\right]^2+\left[g_2(w)-\frac{2-p}{2}\right]^2+(g_1g_2)(w)+1-\frac{(2-p)^2}{2}\\&>2p-\e,
\end{split}
\end{equation*}
also for any $w\geq w_0(\e)$, the latter resulting from
\eqref{interm8a}. Integrating now \eqref{interm7} on $(w_0(\e),w)$,
we obtain
\begin{equation}\label{interm10}
\begin{split}
\frac{g_1(w)-g_2(w)}{g_1(w_0)-g_2(w_0)}&\geq\left[\frac{w}{w_0}\right]^C\exp\left[-\int_{w_0}^{w}\frac{1}{w}\frac{F(g_1)-F(g_2)}{g_1-g_2}(w)\,dw\right]\geq\left[\frac{w}{w_0}\right]^{\overline{C}},
\end{split}
\end{equation}
for any $w>w_0(\e)$, where
$$
\overline{C}:=2p-2(p-1)-2\e=2(1-\e).
$$
Since $\e<1$ by its choice, we get $\overline{C}>0$, whence
$$
\lim\limits_{w\to\infty}\frac{g_1(w)-g_2(w)}{g_1(w_0)-g_2(w_0)}=\infty,
$$
which is a contradiction with the fact that $g(w)\to1$ as
$w\to\infty$, for any $g$ solution of \eqref{ODE4}. This completes
the proof.
\end{proof}

\begin{proof}[Theorem \ref{th.1}]
By Propositions \ref{existence} and \ref{prop.loc0} for $1<p<2$,
respectively Propositions \ref{existence} and \ref{prop.loc02} for
$p>2$, and Lemmas \ref{lempro} and \ref{positivity}, there exists a
solution $h$ to problem (P), and furthermore $h\in
C^\infty((0,1])\cap C^1([0,1])$ with $h(0)=0$ and $h(1)=\ell$, $h$
is positive and nondecreasing in $(0,1]$, and $h$ solves
\eqref{ODE}. In order to show that $h$ is unique, let $\tilde h$ be
any solution to problem \eqref{P}. By Lemmas \ref{lempro},
\ref{positivity} and \ref{lem-regularity}, $\tilde h\in
C^\infty((0,1])$, $\tilde h$ is positive and non-decreasing in
$(0,1]$, and $\tilde h$ is non-constant since $\tilde h(0)=0$.
Hence, by Proposition \ref{prop.unique}, $\tilde h\equiv h$. As a
consequence $h$ is increasing, and the proof is complete.
\end{proof}

\section{Smooth critical points}\label{sec.cp}

\subsection{The case $1<p<2$}\label{subsec.cp1}

In this subsection we prove Theorem \ref{th.2}. Proceeding along the
ideas of \cite{dPGM}, we study the behavior of the solutions $h$ to
Eq. \eqref{ODE} for $r$ large. To do this, recalling the previous
changes of variable, we further change the direction of the time
axis in Eq. \eqref{ODE3} and we deal with the following problem
\begin{equation}\label{ODE5}
w''=-\frac{(1+w^2+w'^2)\left[(2-p)(1+w^2)-2(p-1)ww'\right]w'}{(1+w^2)(1+w^2+(p-1)w'^2)}-w,
\end{equation}
under initial condition
\begin{equation}\label{ODE5init}
w(0)=0, \ w'(0)=-\a, \quad \a>0.
\end{equation}
We have the following result.
\begin{proposition}\label{prop.osc}
Let $p\in(1,2)$. There exists $\alpha_0>0$ such that for any $\a\leq\alpha_0$, there exists a unique global
solution to the problem \eqref{ODE5}-\eqref{ODE5init}, and
$\lim\limits_{t\to\infty}w(t)=0$. Furthermore, the critical points
of $w$, that is, the solutions of $w'(t)=0$, consist of a sequence
$(t_n)$ such that $w(t_{2n})<0$ and $(t_{2n})$ are local minima,
$w(t_{2n+1})>0$ and $(t_{2n+1})$ are local maxima. Moreover, we have
$w(t_{2n-1})>w_(t_{2n+1})$ for any positive integer $n$, and there
exists $n_0>0$ such that $|w(t_n)|$ is decreasing for $n\geq n_0$.
In particular, the minimum is attained at $t=t_0$.
\end{proposition}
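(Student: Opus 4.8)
The plan is to read \eqref{ODE5} as a nonlinearly damped harmonic oscillator near the origin. Writing it as the first order system $w'=k$,
\[
k'=-w-\Phi(w,k)\,k,\qquad \Phi(w,k):=\frac{(1+w^2+k^2)\bigl[(2-p)(1+w^2)-2(p-1)wk\bigr]}{(1+w^2)\bigl(1+w^2+(p-1)k^2\bigr)},
\]
we note first that both factors in the denominator are $\geq 1$, so the vector field is $C^\infty$ on all of $\real^2$; hence local existence and uniqueness are immediate, and the content of the statement is global existence for small $\a$, the limit, and the oscillation pattern. Since $1<p<2$ we have $\Phi(0,0)=2-p>0$ (equivalently, the linear part of \eqref{ODE3} at the origin is an unstable focus with real part $(2-p)/2>0$, so after the time reversal it becomes a stable focus), and by continuity there are $\rho>0$ and $c_0>0$ with $\Phi\geq c_0$ on $\overline{B}_\rho:=\{w^2+k^2\le\rho^2\}$. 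I set $\alpha_0:=\rho$.

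First I would run an energy argument with $V(w,k):=\tfrac12(w^2+k^2)$. Along solutions $\dot V=wk+k\,k'=-\Phi(w,k)\,k^2$, which is $\leq 0$ as long as the orbit stays in $\overline{B}_\rho$. For $\a\leq\alpha_0$ we have $V(0)=\a^2/2\leq\rho^2/2$, so $\overline{B}_\rho$ is positively invariant; the solution is therefore bounded and hence global. Moreover $\dot V=0$ forces $k=0$, and the only complete orbit inside $\{k=0\}$ is $\{(0,0)\}$ (if $k\equiv0$ then $w$ is constant and $0=k'=-w$), so LaSalle's invariance principle yields $(w(t),k(t))\to(0,0)$, in particular $\lim_{t\to\infty}w(t)=0$. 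Finally, since $w'(0)=-\a\neq0$, uniqueness forbids the orbit from reaching the origin in finite time, so $r:=\sqrt{w^2+k^2}>0$ for all $t\geq0$ and polar coordinates are available.

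Next I would extract the oscillation structure by writing $w=r\cos\phi$, $k=r\sin\phi$. A direct computation gives $\dot\phi=(w\,k'-k\,w')/r^2=-1-\tfrac12\Phi(w,k)\sin2\phi$, so that $\dot\phi=-1-\tfrac{2-p}{2}\sin2\phi+O(r)$ as $r\to0$; shrinking $\rho$ (hence $\alpha_0$) we may assume $\dot\phi<0$ and bounded away from $0$ on $\overline{B}_\rho$. Since $(w(0),k(0))=(0,-\a)$ we have $\phi(0)=-\pi/2$, and $\phi$ decreases strictly to $-\infty$; it therefore crosses each level $-(n+1)\pi$, $n\geq0$, exactly once, say at $t_n$, and these are precisely the zeros of $w'$ (because $w'=k=0$ iff $\phi\in\zet\pi$). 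At such a point $w(t_n)=(-1)^{n+1}r(t_n)$ and $w''(t_n)=k'(t_n)=-w(t_n)$, which gives $w(t_{2n})<0$ with $w''(t_{2n})>0$ (local minima) and $w(t_{2n+1})>0$ with $w''(t_{2n+1})<0$ (local maxima), as claimed.

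The amplitude monotonicity then follows once more from $V$: on each open interval $(t_n,t_{n+1})$ one has $k\neq0$ and $\Phi\geq c_0$, hence $\dot V<0$ there, so $V$ is strictly decreasing along the whole trajectory. Since $k(t_n)=0$ gives $V(t_n)=\tfrac12 w(t_n)^2$, we obtain $|w(t_0)|>|w(t_1)|>|w(t_2)|>\cdots$; this yields $w(t_{2n-1})>w(t_{2n+1})$ for the consecutive (positive) maxima, shows $|w(t_n)|$ is decreasing (so any $n_0$ works, e.g.\ $n_0=1$), and --- since $w\to0$ --- identifies the infimum of $w$ on $[0,\infty)$ with its most negative local minimum, which is $w(t_0)$. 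I expect the only delicate point to be the two smallness choices for $\rho$: one must verify that $\overline{B}_\rho$ is genuinely a trapping region on which both $\Phi>0$ and $\dot\phi<0$ hold uniformly, i.e.\ that the genuinely nonlinear correction in $\Phi$ is dominated by the linear damped-oscillator behaviour; once that is secured, the rest is bookkeeping with $V$ and the monotone rotation of $\phi$.
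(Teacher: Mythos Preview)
Your argument is correct and proves the proposition as stated, but it proceeds quite differently from the paper. You treat \eqref{ODE5} as a nonlinearly damped oscillator, choose a small invariant disc $\overline{B}_\rho$ on which the damping $\Phi$ stays positive, and then run a Lyapunov/LaSalle argument together with a monotone polar angle to obtain global existence, convergence to~$0$, the alternating min/max structure, and in fact strict monotonicity of $|w(t_n)|$ for \emph{all} $n$. The paper instead embeds the proposition in a global phase-plane analysis of \eqref{ODEsyst2}: it classifies the critical points at infinity on the Poincar\'e sphere, identifies the saddles $P_1,P_2$ and the elliptic point $P_3$, and shows that the basin of attraction of the origin on the axis $\{w=0\}$ is exactly the interval $(-\alpha_0,0)$, where $\alpha_0$ is determined by the separatrix from $P_3$ to $P_2$; the oscillation claims (A)--(B) and the eventual monotonicity of $|w(t_n)|$ are then read off from the spiral structure and the energy $E=w^2+w'^2$. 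Your approach is shorter and entirely elementary, but it produces a possibly much smaller, non-sharp $\alpha_0=\rho$; the paper's $\alpha_0$ is the sharp threshold, and this specific value is what is used immediately afterwards (Remark~\ref{rem.phase} and the proof of Theorem~\ref{th.2}) to place the minimizer's orbit inside the basin and to describe what happens for $\alpha\geq\alpha_0$. So your proof settles the proposition, but would not by itself feed the downstream arguments without the additional global analysis.
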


Before proving Proposition \ref{prop.osc}, we transform \eqref{ODE5}
into an autonomous system whose associated phase plane will be
analyzed in order to classify all the solutions. We divide this
analysis into several steps.

\medskip

\noindent \textbf{Step 1. Local analysis of the phase plane.} We
pass again to the autonomous system associated to \eqref{ODE5}
\begin{equation}\label{ODEsyst2}
\left\{\begin{array}{ll}\displaystyle w'=k,\\ \\ \displaystyle
k'=-\frac{(1+w^2+k^2)\left[(2-p)(1+w^2)-2(p-1)wk\right]k}{(1+w^2)(1+w^2+(p-1)k^2)}-w.\end{array}\right.
\end{equation}
Analyzing the phase plane, we find that \eqref{ODEsyst2} has a
unique critical point in the plane, the origin, and the
linearization near the origin has the matrix
$$
C(0,0)=\left(
         \begin{array}{cc}
           0 \ & 1 \\
           -1 \ & -(2-p) \\
         \end{array}
       \right),
$$
which has two complex eigenvalues with real parts $(p-2)/2<0$. Thus,
the origin is a stable node (attractor). From standard theory (see
\cite[Section 2.9]{Pe}), we obtain that all the orbits of the system
that lie in a neighborhood $B(0,\delta)$ of the origin for some
$\delta>0$, will enter the origin in infinite time, meaning that
$$
\lim\limits_{t\to\infty}w(t)=0, \quad
\lim\limits_{t\to\infty}w'(t)=0.
$$
Indeed, if $w$ is defined on a maximal interval $[0,T)$ with
$T<\infty$, then $w$ can be extended to $t=T$ in the $C^1$ class by
simply letting $w(T)=0$, and then continued afterwards, in
contradiction with the maximality of the interval $[0,T)$, whence
$T=\infty$.

\noindent In order to establish the connections in the plane, we
have to study the critical points of the system \eqref{ODEsyst2} at
infinity. Since the system is not polynomial, we proceed as in the
analysis in Section \ref{sec.lb0} and
we find
that the critical points at infinity are given by the polar angles
\begin{equation}\label{critinf3}
\theta\in\left\{\frac{\pi}{2},\frac{3\pi}{2},\frac{3\pi}{4},\frac{7\pi}{4},\arccos\left(\frac{p-1}{\sqrt{p^2-2p+2}}\right),\arccos\left(\frac{p-1}{\sqrt{p^2-2p+2}}\right)\right\}.
\end{equation}
First of all, the local behavior near the nonhyperbolic critical
points with polar angles $\theta=\pi/2$ and $\theta=3\pi/2$ has
already been studied in Section \ref{sec.lb0}. We deduced that they
are elliptic critical point. We denote by $P_3$ the point with
$\theta=3\pi/2$ and $P_3^{*}$ the one with $\theta=\pi/2$ in the
sequel.

To study the local behavior near the other critical points, that are
hyperbolic, we pass again to the Poincaré sphere by letting
$$
w=\frac{\varrho}{1-\varrho}\cos\Phi, \quad
k=\frac{\varrho}{1-\varrho}\sin\Phi.
$$
After straightforward calculations, we obtain the following system
\begin{equation}\label{ODEsyst.infty2}
\left\{\begin{array}{ll}\varrho'=\frac{\varrho(1-\varrho)\left[(1-\varrho)^2+\varrho^2\right]\left[2(p-1)\varrho^2\cos\Phi\sin\Phi-(2-p)((1-\varrho)^2+\varrho^2\cos^2\Phi)\right]\sin^2\Phi}
{\left[(1-\varrho)^2+\varrho^2\cos^2\Phi\right]\left[(1-\varrho)^2+\varrho^2\cos^2\Phi+(p-1)\varrho^2\sin^2\Phi\right]},\\
\\
\Phi'=\frac{\left[(1-\varrho)^2+\varrho^2\right]\left[2(p-1)\varrho^2\cos\Phi\sin\Phi-(2-p)((1-\varrho)^2+\varrho^2\cos^2\Phi)\right]\sin\Phi\cos\Phi}
{\left[(1-\varrho)^2+\varrho^2\cos^2\Phi\right]\left[(1-\varrho)^2+\varrho^2\cos^2\Phi+(p-1)\varrho^2\sin^2\Phi\right]}-1.\end{array}\right.
\end{equation}
In this setting, our critical points at infinity become
$P_1=(1,7\pi/4)$, $P_2=(1,\arccos((p-1)/\sqrt{p^2-2p+2}))$ and their
symmetrics $P_2^*=(1,\arccos(-(p-1)/\sqrt{p^2-2p+2}))$ and
$P_1^*=(1,3\pi/4)$. Due to the symmetry of the system
\eqref{ODEsyst.infty2}, we will analyze only the points $P_1$ and
$P_2$. The linearization of \eqref{ODEsyst.infty2} near the two
points has the matrices
\begin{equation*}
M\left(1,\frac{7\pi}{4}\right)=\left(
                                 \begin{array}{cc}
                                   1 & 0 \\
                                   \frac{2(p-1)}{p} & -2 \\
                                 \end{array}
                               \right), \end{equation*}
\begin{equation*}M\left(1,\arccos\frac{p-1}{\sqrt{p^2-2p+2}}\right)=\left(
                                                                  \begin{array}{cc}
                                                                    -\frac{1}{p-1} & 0 \\
                                                                    \frac{2}{p} & \frac{p^2-2p+2}{p-1} \\
                                                                  \end{array}
                                                                \right).
\end{equation*}
It follows that both points $P_1$ and $P_2$ are saddle points, of
different orientation: there exists a unique orbit going out of
$P_1$ into the plane, while all the other orbits, that enter $P_1$,
are contained in the curve $\varrho=1$, and there exists a unique
orbit entering $P_2$ and coming from the plane, the rest of orbits
that go out from $P_2$ are contained in the curve $\varrho=1$ and
are not of interest to us.

\medskip

\noindent \textbf{Step 2. No blow-up at the critical point $P_2$.} In
this step we prove two preliminary facts that are needed before
performing the global analysis of the previous phase plane. More
precisely, we claim the following:

\noindent \textbf{(1).} There exists a unique positive, non-constant
and decreasing solution to \eqref{ODE5} corresponding to the orbit
of the system \eqref{ODEsyst.infty2} entering the critical point
$P_2$ and coming from the plane.

\noindent \textbf{(2).} The solution corresponding to the orbit
entering $P_2$ does this in infinite time.

In order to prove assertion \textbf{(1)}, we define
$$
\overline{g}(w):=\frac{w}{w'(t(w))}.
$$
Following similar ideas and calculations to those in the proof of
Proposition \ref{prop.unique}, we deduce that $g$ satisfies the
equation
\begin{equation}\label{ODE6}
g'(w)=\frac{1}{w}\left(g(w)^3+(2-p)g(w)^2+g(w)-\overline{F}(w,g(w))\right),
\end{equation}
where
$$
\overline{F}(w,g)=\frac{w^2g\left[-2(p-1)(1+w^2)g^2+(2-p)^2(1+w^2)g-2(p-1)w^2\right]}{(1+w^2)\left[(1+w^2)g^2+(p-1)w^2\right]}.
$$
Assuming that there are two different solutions $w_1$, $w_2$,
corresponding to solutions $g_1$, $g_2$ of \eqref{ODE6}, we follow
the same steps in the proof of Proposition \ref{prop.unique} and by
straightforward calculations, we obtain
\begin{equation}\label{interm7bis}
\begin{split}
&\frac{(g_1-g_2)'(w)}{(g_1-g_2)(w)}\\ &=\frac{1}{w}\left[\left(g_1(w)+\frac{2-p}{2}\right)^2+\left(g_2(w)+\frac{2-p}{2}\right)^2+(g_1g_2)(w)+1-\frac{(2-p)^2}{2}\right]\\
&-\frac{1}{w}\frac{\overline{F}(w,g_1(w))-\overline{F}(w,g_2(w))}{g_1(w)-g_2(w)}\geq\frac{C}{w}-\frac{1}{w}\frac{\overline{F}(w,g_1(w))-\overline{F}(w,g_2(w))}{g_1(w)-g_2(w)},
\end{split}
\end{equation}
for $w$ sufficiently large. Recalling that we deal with orbits
entering $P_2$, we have $\lim\limits_{w\to\infty}g(w)=p-1$, whence the
constant $C$ satisfies
\begin{equation}\label{interm8}
C\sim\frac{p^2}{2}+(p-1)^2+1-\frac{(2-p)^2}{2}=p^2.
\end{equation}
On the other hand, by differentiating $\overline{F}(w,g(w))$ with
respect to its second variable $g$ and passing to the limit as
$w\to\infty$, we easily get that $\overline{F}(w,g(w))$ admits a
Lipschitz constant for $g$ for $w$ large, which is exactly $2(p-1)$.
We skip the proof since it is similar to that of Step 4 in the proof
of Proposition \ref{prop.unique}. Since $p^2>2(p-1)$ for all $p>1$,
reasoning as there, for any $\e>0$, there exists $w_0=w_0(\e)$
sufficiently large such that
\begin{equation}\label{interm15}
\frac{g_1(w)-g_2(w)}{g_1(w_0)-g_2(w_0)}\geq\left(\frac{w}{w_0}\right)^{\overline{C}},
\end{equation}
where $\overline{C}=p^2-2p+2-2\e>0$ for some $\e>0$ small. Fixing
such $\e>0$ and $w_0$, this implies
$$\lim\limits_{w\to\infty}(g_1(w)-g_2(w))=\infty,$$ in contradiction
with the fact that $\lim\limits_{w\to\infty}g_j(w)=p-1$ for $j=1,2$.

In order to prove assertion \textbf{(2)}, we introduce the following
energy associated naturally to \eqref{ODE5}:
\begin{equation}\label{energy1}
E(w):=w^2+w'^2.
\end{equation}
Thus,
\begin{equation}\label{derivenergy}
\frac{d}{dt}E(w(t))=\frac{2(1+w^2+w'^2)w'^2\left[-(2-p)(1+w^2)+2(p-1)ww'\right]}{(1+w^2)(1+w^2+(p-1)w'^2)}.
\end{equation}
The idea is to bound from above the time derivative of $E(w)$ by
some function of $E(w)$. Recalling the notation $w'=k$ and the fact
that $w/k\to p-1$ for the orbit entering $P_2$, we have
\begin{equation*}
\begin{split}
\frac{d}{dt}E(w(t))&=2(1+E(w(t)))\frac{k^2\left[-(2-p)(1+w^2)+2(p-1)wk\right]}{(1+w^2)(1+w^2+(p-1)k^2)}\\
&=2(1+E(w(t)))H(w(t)).
\end{split}
\end{equation*}
Estimating $H(w)$ as $w/k\to p-1$, we find that
$$
\lim\limits_{w\to\infty}H(w)=\frac{1}{p-1},
$$
hence there exists a sufficiently large constant $C>0$ such that
\begin{equation}\label{ineqdiff}
\frac{d}{dt}E(w(t))\leq C(1+E(w(t))), \quad t>t_0
\end{equation}
for some $t_0>0$. By direct integration in \eqref{ineqdiff}, we
deduce that the solution $w$ is global.

\medskip

\noindent \textbf{Step 3. Global analysis of the phase plane.} In
this step we end the analysis of the phase plane by establishing the
connections between critical points. We restrict ourselves by
symmetry to critical points in the hyperplane $w'\leq0$.

As we already know, there is a unique orbit going out of $P_1$ into
the plane and a unique orbit entering $P_2$ from the plane. We show
that the orbit coming from $P_1$ is attracted by the origin. Assume
that this does not happen, thus this orbit may end up either at the
elliptic point $(1,3\pi/2)$ or at $P_2$. In the latter case, since
the phase plane has no self-intersections at finite points (by
standard uniqueness arguments), by uniqueness of the orbits in $P_1$
and $P_2$, the orbits entering $0$ have to come from the elliptic
point $P_3=(1,3\pi/2)$, that will intersect the connection between
$P_1$ and $P_2$, contradiction. Finally, if the orbit from $P_1$
goes to the elliptic point $P_3$, the plane would remain incomplete,
since the remaining two points are both attracting orbits from the
plane. Hence, it remains the unique possibility that $P_1$ is
connected to the origin, and this is a global solution. Then, the
plane is completed by the fact that the orbits from the elliptic
point $P_3$ go one of them to $P_2$ and the rest either to the
origin or back to the elliptic point $P_3$, as shown in Figure
\ref{figure1}. We will let then, $\alpha_0:=-w'(t_0)$, where $w$
connects $P_3$  and $P_2$ and $t_0$ is such that $w(t_0)=0$ and
$w(t)<0$ for $t>t_0$.

Thus, there are several global orbits. One of them is the one that
connects $P_1$ and $O$ which (before the first intersection with the
axe $w=0$) corresponds to the unique minimizer. Coming back to $h$,
we find that $h(0)=0$, $h(t)\to\pi/2$ as $t\to\infty$. We will study
its oscillatory properties in the sequel. All the other global
orbits come from the elliptic point $P_3$ and reach it in a
countable infinite number of times either forever or  before being
attracted by $P_2$ or the origin. In any case, when coming back to
$h$, all of them verify that $\lim\limits_{r\to 0^+} h(r)=-\infty$.
These connections can be seen better in Figures \ref{figure1} and
\ref{figure2} below.

\medskip

\begin{figure}[ht!]
   % Requires \usepackage{graphicx}
   \begin{center}
   \includegraphics[width=0.95\textwidth]{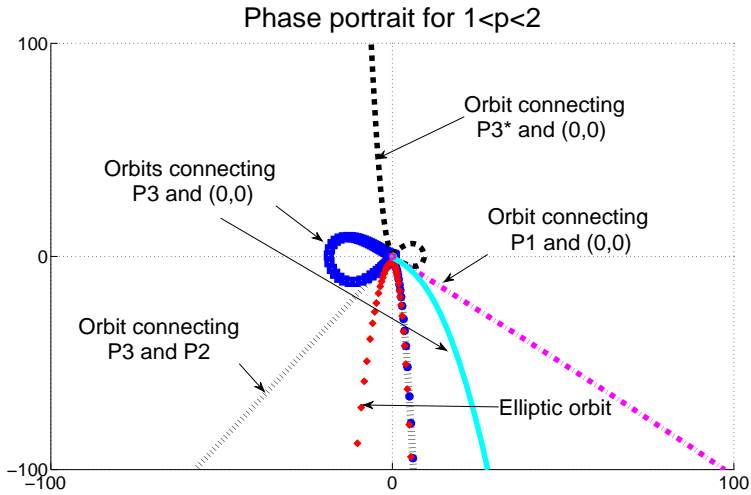}
   \end{center}
   \caption{Global phase portrait of \eqref{ODEsyst2}. Numerical simulation for $p=1+\frac{\sqrt{3}}{3}$. Note that all phase portraits are in fact symmetric with respect to $w'$. However, for clarity reasons, we plot only the orbits joining points in the lower half-plane  plus one orbit in the upper half-plane.}\label{figure1}
\end{figure}

\medskip

\begin{figure}[ht!]
   % Requires \usepackage{graphicx}
   \begin{center}
   \includegraphics[width=0.95\textwidth]{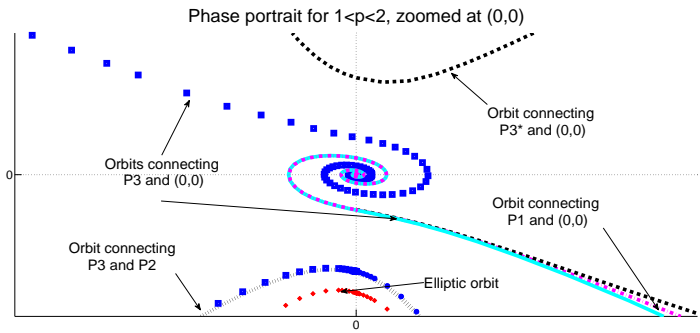}
   \end{center}
   \caption{Phase portrait of \eqref{ODEsyst2} near the origin. Numerical simulation for $p=1+\frac{\sqrt 3}{3}$.}\label{figure2}
\end{figure}

\begin{remark}\label{rem.phase}
  From the previous phase plane analysis, we deduce that \begin{itemize}
    \item[(a)] Let $h^*\in C^\infty((0,1])$ be the solution to \eqref{P} with
$h^*(1)=\frac{\pi}{2}$, as given by Theorem \ref{th.1}.  We pass to
logarithmic coordinates by letting $f(t):=h^*(e^{t})$, $t\in
(0,\infty)$ and $
 w(t):=
 \cot f(t)$. Then, $0>w'(0)>-\alpha_0$.
    \item[(b)] If $w$ is a solution to \eqref{ODE5}-\eqref{ODE5init} with $\alpha=\alpha_0$, then $w$ is a global solution
    and belongs to the orbit entering $P_2$ in the phase plane.
    \item[(c)]  If $w$ is a solution to \eqref{ODE5}-\eqref{ODE5init} with $\alpha>\alpha_0$, then there
        is $t_1$ such that $\lim\limits_{t\to t_1}(w(t),w'(t))= P_3$ and $w(t)<0$, $w'(t)<0$ for $t\in (0,t_1)$.
  \end{itemize}
\end{remark}

\medskip

In order to characterize all global solutions to \eqref{ODE}, we
have, in a preliminary step, to exclude both that the equation
becomes singular or that there is a blow-up in finite time.
\begin{lemma}\label{lem.global}
  Let $h$ be a nonconstant solution of \eqref{ODE} in some interval $(r_0,r_1)\subset(0,+\infty)$. Then, \begin{itemize}
    \item[(i)]$ (p-1)h'(r)^2+\frac{\sin^2h(r)}{r^2}>0$ for all $r\in [r_0,r_1]$.
    \item[(ii)] $|h'(r)|\leq C$ for all $r\in [r_0,r_1]$.
    \item[(iii)] $h$ can be globally extended in $(0,+\infty)$ as a solution to \eqref{ODE}.
  \end{itemize}
\end{lemma}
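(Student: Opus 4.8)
The plan is to treat \eqref{ODE} as a second-order ODE in normal form, $h''=\mathcal N(r,h,h')/D(r)$, where $D(r):=(p-1)(h')^2+\sin^2 h/r^2$ is the coefficient of $h''$ and $\mathcal N(r,h,h')=-(h')^3/r-(p-3)\big((h')^2\sin h\cos h/r^2-h'\sin^2 h/r^3\big)+\sin^3 h\cos h/r^4$ is a cubic polynomial in $h'$ whose coefficients are bounded trigonometric functions of $h$ times powers of $1/r$; thus $|\mathcal N|\le C_a(|h'|^3+|h'|^2+|h'|+1)$ uniformly in $h$ for $r\ge a>0$. Note that a classical solution of \eqref{ODE} is $C^2$, so $h''$ is a well-defined continuous function even at points where $D=0$. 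For part (i) I would use that $p>1$ makes $D\ge0$ everywhere, with $D(r_*)=0$ exactly when $h'(r_*)=0$ and $\sin h(r_*)=0$, i.e.\ $h(r_*)=m\pi$ for some $m\in\mathbb Z$; suppose such an $r_*$ exists. Since \eqref{ODE} is invariant under $h\mapsto h-m\pi$, take $m=0$ and write $\psi:=h$, so that $\psi(r_*)=\psi'(r_*)=0$. Bounding the terms of $\mathcal N$ that carry a factor $\psi'$ by means of $D\ge(p-1)(\psi')^2$, and the one term that carries $\sin^3\psi$ by means of $D\ge\sin^2\psi/r^2$, one obtains an estimate $|\psi''|\le C(|\psi|+|\psi'|)$ on any compact subinterval of $(0,\infty)$ containing $r_*$; it holds where $D>0$ directly, and extends to points where $D=0$ by continuity of $\psi''$ (or trivially, where $\psi$ is locally $0$). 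A two-sided Gronwall argument for $u:=|\psi|+|\psi'|$, which satisfies $|u'|\le(1+C)u$ a.e.\ and vanishes at $r_*$, then forces $u\equiv0$, i.e.\ $h\equiv m\pi$, near $r_*$; since the set of points around which $h$ is locally constant and equal to $m\pi$ is open and closed in $(r_0,r_1)$, this makes $h\equiv m\pi$, contradicting non-constancy. Hence $D>0$ on $[r_0,r_1]$.

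For part (ii), fix a compact $[a,b]\subset(r_0,r_1)$ with $a>0$ (when $r_0=0$, the bound near the origin is already provided by Propositions \ref{prop.loc0} and \ref{prop.loc02}). Wherever $|h'|\ge1$ one has $|\mathcal N|\le 4C_a|h'|^3$ and $D\ge(p-1)(h')^2>0$, hence $|h''|\le\tfrac{4C_a}{p-1}|h'|$. Consequently $m(r):=\max\{|h'(r)|,1\}$ is locally Lipschitz with $|m'|\le\tfrac{4C_a}{p-1}m$ a.e., and Gronwall gives $|h'(r)|\le m(r_{\mathrm{mid}})\exp\!\big(\tfrac{4C_a}{p-1}(b-a)\big)$ on $[a,b]$ for a fixed $r_{\mathrm{mid}}\in(a,b)$; keeping $a,b$ inside a fixed compact subinterval of $(0,\infty)$ makes the constant uniform, which is (ii).

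For part (iii), let $(\rho_0,\rho_1)$ be the maximal interval of existence of the solution of the normal-form equation extending $h$; I must show $\rho_0=0$ and $\rho_1=+\infty$, so suppose $\rho_1<\infty$. By (ii) $|h'|$ is bounded near $\rho_1$, so $h$ is Lipschitz there and $h(\rho_1):=\lim_{r\to\rho_1^-}h(r)$ exists. If $\liminf_{r\to\rho_1^-}D(r)>0$ then $D\ge c>0$ near $\rho_1$, so $h''=\mathcal N/D$ is bounded, $h'$ also extends continuously to $\rho_1$, $D(\rho_1)\ge c>0$, and the ODE can be restarted from $(\rho_1,h(\rho_1),h'(\rho_1))$, contradicting maximality. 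If instead $\liminf_{r\to\rho_1^-}D(r)=0$ then, since $\sin^2 h/r^2$ has the limit $\sin^2 h(\rho_1)/\rho_1^2$, we get $\sin h(\rho_1)=0$, so $h(\rho_1)=m\pi$, and $h'(r_j)\to0$ along a sequence $r_j\to\rho_1^-$; running the estimate $|u'|\le(1+C)u$ of part (i) (for $\psi:=h-m\pi$) on $(\rho_1-\delta,\rho_1)$, where $u\to0$ along $(r_j)$ and hence --- by monotonicity of $u\,e^{(1+C)r}$ --- altogether, forces $u\equiv0$, i.e.\ $h\equiv m\pi$, again a contradiction. Thus $\rho_1=+\infty$, and symmetrically $\rho_0=0$; since the normal form and \eqref{ODE} coincide on $\{D>0\}$, which by (i) is all of $(0,\infty)$, this gives (iii).

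The hard part is the uniform estimate $|\psi''|\le C(|\psi|+|\psi'|)$ near a prospective zero of $D$, together with its validity up to and at such a point --- this is exactly where the hypothesis $p>1$ (which makes both summands of $D$ available as one-sided bounds) and the $C^2$-regularity of classical solutions of \eqref{ODE} are used. The only other slightly delicate point is the one-sided (backward) Gronwall argument at the finite right endpoint $\rho_1$ in part (iii).
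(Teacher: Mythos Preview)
Your argument is correct and takes a genuinely different route from the paper's. The paper passes to logarithmic coordinates $f(t)=h(e^t)$ and then to $w=\cot f$, and leans on the phase-plane classification already developed: it observes that (i) and (ii) can only fail when the $(w,w')$-orbit approaches the elliptic critical point $P_3$ at infinity, and then derives a contradiction there by extracting the leading-order relation $f''\sim\frac{p-2}{p-1}f'$ (for (i)) and a sign/concavity argument (for (ii)). You stay entirely in the $h$-variable and replace the phase-plane input by the pointwise inequality $|h''|\le C(|h'|+|\sin h|)$, obtained by dividing each term of $\mathcal N$ by whichever summand of $D=(p-1)(h')^2+\sin^2 h/r^2$ cancels it; note that the cross-term $(p-3)h'\sin^2 h/r^3$ must be controlled via $D\ge\sin^2 h/r^2$ rather than via $D\ge(p-1)(h')^2$ as your sentence suggests, but the stated bound is still correct. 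Combined with the Gronwall argument for $u=|\psi|+|\psi'|$, this gives (i); the same structure, with $D\ge(p-1)(h')^2$ alone, gives the growth bound in (ii). Your approach is more elementary and self-contained --- it needs none of the phase-plane machinery and treats $1<p<2$ and $p>2$ in one stroke (the paper proves $p>2$ separately, in Lemma~\ref{lem.global2}, with a different auxiliary variable $g=w'/w^2$). The paper's route, by contrast, explains \emph{where} the danger lies in the global dynamical picture. Your one-sided Gronwall at a hypothetical finite endpoint $\rho_1$ in (iii), using monotonicity of $u\,e^{(1+C)r}$ and vanishing along a subsequence, is a clean way to close the argument without any phase-plane input.
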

\begin{proof}
  Obviously, (iii) is implied by standard ODE theory by (i) and (ii). Moreover, both statements are equivalent
  stated in terms of nonconstant solutions in $(t_0,t_1)$ of \begin{equation}\label{ODEexp}
   f''((p-1)f'^2+\sin^2f)+(2-p)f'(f'^2+\sin^2f)=\sin f\cos f((3-p)f'^2+\sin^2f)
 \end{equation} via the smooth transformation $f(t)=h(e^{t})$:
  \begin{itemize}
    \item[$(a)$] $f'(t)^2+\sin^2(f(t))>0$ for all $t\in [t_0,t_1]$.
     \item[$(b)$] $|f'(t)|\leq C$ for all $t\in [t_0,t_1]$.
    \end{itemize}
 Suppose that $f$ is a solution of \eqref{ODEexp} in some finite time interval $(t_0,t_1)$.
 Then, performing the change of variables $w=\cot(f)$, we see that the only points where $(a)$ or $(b)$ may fail are
 $t_0$ in case  $\lim\limits_{t\to
 t_0^{+}}(w(t),w'(t))=(+\infty,-\infty)$ and the points $T\in (t_0,t_1]$ such that $\lim\limits_{t\to
 T^{-}}(w(t),w'(t))=(-\infty,-\infty)$,
 i.\,e. when the orbit of $w$ in the phase portrait approaches the elliptic critical
 point $P_3$.

 \noindent Suppose now, by contradiction, that $\lim\limits_{t\to T^{-}}f'(t)^2+\sin^2 f(t)=0$ (respectively $\lim\limits_{t\to t_0^{+}}f'(t)^2+\sin^2 f(t)=0$).
 Then, since at the elliptic point $P_3$ we have
 $$\lim\limits_{t\to T^-}\frac{\cos f(t)f'(t)}{\sin f(t)}=\lim_{t\to T^-}\frac{-w'(t)}{w(t)}=-\infty,$$ or, respectively,  $$\lim\limits_{t\to t_0^+}\frac{\cos f(t)f'(t)}{\sin f(t)}=\lim_{t\to t_0^+}\frac{-w'(t)}{w(t)}=+\infty,$$
 we obtain that $\sin f(t)=o(f'(t))$ in a left neighborhood of $T$ (respectively in a right neighborhood of $t_0$).
 Then, substituting this in \eqref{ODEexp}, we obtain that $$f''(t)=\frac{p-2}{p-1}f'(t)+o(f'(t)),$$ in a left neighborhood of
 $T$. By a simple integration, we find that $f'(t)\sim K_1e^{-Kt}$ as
 $t\to T^-$ (resp. $t\to t_0^+$), for suitable constants $K>0$, $K_1\in\real$.
 This is a contradiction since $f'(t)\to 0$ as $t\to T^-$ (resp. $t\to t_0^+$). Therefore, $(a)$ is proved.

 For $(b)$, it suffices to note that the right hand
 side of \eqref{ODEexp} is negative when $t\to T^-$
and the second member of the left hand side is positive when $t\to
T^-$. Therefore, $f$ must be concave in a left neighborhood of $T$,
which directly yields $(b)$. In case $\lim\limits_{t\to t_0^+}
|f'(t)|=+\infty$, then, as in case (a), one would obtain that
$$f''(t)=\frac{p-2}{p-1}f'(t)+o(f'(t))\,,$$ in a right neighborhood
of $t_0$. This leads to a contradiction if $t_0$ is finite and thus,
$(b)$ is also proved in this case.
\end{proof}

\begin{proof}[Proposition \ref{prop.osc}] The proof is divided
into two steps in order to ease the reading. We recall that we refer
only to solutions $w$ belonging to orbits that go to the origin
$O(0,0)$ in the phase plane.

\medskip

\noindent \textbf{Step 1. Oscillatory properties.} In order to study
the oscillatory properties of $w$, we need the following preliminary
results

$\bullet$ \textbf{(A)} Given a point $s_*$ such that $w(s_*)=0$ and
$w'(s_*)\neq0$, there exists $t_*\in(s_*,\infty)$ such that
$w'(t_*)=0$ and $w(t_*)<0$ if $w'(s_*)<0$, respectively $w'(t_*)=0$
and $w(t_*)>0$ if $w'(s_*)>0$. Moreover, $t_*$ is a local minimum
(respectively local maximum) point for $w$.

$\bullet$ \textbf{(B)} Given a point $t_*$ such that $w'(t_*)=0$ and
$w(t_*)\neq0$, there exists $s_*\in(t_*,\infty)$ such that
$w(s_*)=0$ and $w'(s_*)>0$ if $w(t_*)<0$, respectively $w(s_*)=0$
and $w'(s_*)<0$ if $w(t_*)>0$.

Assume for the moment that claims (A) and (B) are true, and we
follow with the proof of the Proposition. We apply (A) and (B) in an
iterated form starting from $s_*=s_0=0$, where we know from
\eqref{ODE5init} that $w(0)=0$, $w'(0)=-\a<0$. We thus find an
increasing sequence $(t_n)$ such that $w'(t_n)=0$, $w(t_{2n})<0$,
$w(t_{2n+1})>0$, and, due to (A), $(t_{2n})$ are local minima and
$(t_{2n+1})$ are local maxima. Moreover, $w$ is monotone on
$(t_n,t_{n+1})$ and there exists a point $s_n\in(t_n,t_{n+1})$ such
that $w(s_n)=0$, as (B) implies easily. The fact that $t_n\to\infty$
is easy: if $t_n\to T<\infty$, then by continuity of $w$ and $w'$ we
get $w(T)=w'(T)=0$, thus $w\equiv0$ by general theory, which is a
contradiction.

The fact that the local maxima are ordered, that is,
$w(t_{2n+1})<w(t_{2n-1})$, follows directly from the ordering of the
intersections of the orbit connecting $P_1$ and the origin with the
axis $w'=0$. In particular, the minimum is taken at $t=t_0$ as
stated.

Finally, for the last assertion, we recall the energy $E(w)$
introduced in \eqref{energy1}. The sign of its derivative
\eqref{derivenergy} is determined by the sign of the quantity
$$G(w):=(2-p)(1+w^2)-2(p-1)ww'.$$ From the phase plane analysis, we have that
$\lim\limits_{t\to\infty}G(w(t))=2-p>0$, hence there exists $t_0>0$
sufficiently large such that $G(w(t))>0$ for $t\in(t_0,\infty)$.
Thus, $E(w(t))$ is decreasing on $(t_0,\infty)$. Let $n_0$ be such
that $t_n>t_0$ for $n\geq n_0$. Then $E(t_n)>E(t_{n+1})$, which
implies
$$
|w(t_n)|>|w(t_{n+1})|, \quad \hbox{for} \ \hbox{any} \ n\geq n_0,
$$
ending the proof.

\medskip

\noindent \textbf{Step 2. Proof of statements (A) and (B).} The
proof of claim (A) is immediate from the phase plane. Indeed, if we
have $s_*$ such that $w'(s_*)<0$ and $w(s_*)=0$ and there is no
$t_*\in(s_*,\infty)$ such that $w'(t_*)=0$, it follows that $w'<0$
forever and the orbit of $w$ either goes directly to $O(0,0)$ or it
goes to $P_2$ or $P_3$. The latter is impossible since we deal only
with orbits entering the origin, and the former is impossible since
$w'<0$ implies $w$ decreasing, contradiction. A similar argument is
valid for $s_*$ such that $w'(s_*)>0$ and $w(s_*)=0$.

\medskip

To prove assertion (B), we argue by contradiction. Assume we are at
a point $t_*$ such that $w(t_*)<0$, $w'(t_*)=0$ and there is no
$s_*\in(t_*,\infty)$ as in the statement of (B). Then, either the
orbit of $w$ goes directly to $O(0,0)$ without cutting the axis
$w=0$, or it cuts again the axis $w'=0$. The latter situation is
eliminated in an obvious way, since after cutting again $w'=0$, $w$
will become decreasing, thus it will self-intersect, which is not
allowed in a phase plane. Suppose now that the orbit of $w$ enters
$O(0,0)$ directly. We define
$$
j(t):=\frac{w'(t)}{w(t)},
$$
hence $j(t_*)=0$ and $j'(t_*)=-1$, which is proved by noticing that
$w''(t_*)=-w(t_*)$. We then establish the equation satisfied by $j$,
which is
\begin{equation}\label{ODE7}
j'=-1-j^2-\frac{(1+w^2+w^2j^2)\left[(2-p)(1+w^2)-2(p-1)w^2j\right]j}{(1+w^2)(1+w^2+(p-1)w^2j^2)}.
\end{equation}
Thus, in a sufficiently small neighborhood of the origin we have
$$
j'\leq-1-j^2,
$$
It follows that $j$ blows up to $-\infty$ at some point $s_*>t_*$.
Hence $w(s_*)=0$ and, by the assumption, $w'(s_*)=0$. Coming back to
$f$, this implies that $f(t)=\pi/2$ for $t\geq s_*$. On the other
hand, we know that $f$ is nonconstant, thus, by part (i) in Lemma
\ref{lem.global}, \eqref{ODEexp} is nonsingular and we can apply
standard uniqueness theory to get $f\equiv\pi/2$, contradiction.
This finishes the proof.
\end{proof}

We are now ready to prove Theorem \ref{th.2}.

\begin{proof}[Theorem \ref{th.2}]

(a) and (b) are a direct consequence of Remark \ref{rem.phase} and Proposition \ref{prop.osc}.

\medskip

To prove (c), for $\ell<\frac{\pi}{2}$ we scale $r$ so that $\hat h
(\hat r) = h(\alpha \hat r)$ satisfies $\hat h(1)=\ell$ with
$\alpha>0$. By the scale invariance of \eqref{ODE}, $\hat h$ is also
a solution, and $\hat h$ is increasing and positive in $(0,1)$.
Hence, by Proposition \ref{prop.unique}, it coincides with the
minimizer $h_\ell$ of $ G_p$.

\medskip

To prove (d), we have  to characterize finite global non-constant
solutions $\hat h$ to \eqref{ODE}. Take one of them:  we may assume
without loss of generality that $\hat h(r)\in(0,\pi)$ for $r\in
(r_1,r_2)\subset(0,+\infty)$. Then, we can pass again to
\eqref{ODE5} with $w-$variable in some time interval $(t_1,t_2)$. By
the analysis of the phase plane, since $\hat h$ is finite and
global, then $\hat w(t)=\cot \hat h(e^t)$ is such that $(\hat w(t),
(\hat w)'(t))$ joins $P_1$ and the origin. Since this orbit is
unique, then by the invariance of \eqref{ODE}, $\hat h$ is of the
form in (d).

\medskip

Finally, again by the analysis of the phase plane, there are
infinitely many orbits connecting the elliptic point $P_3$ and the
origin and a unique orbit connecting $P_3$ and $P_2$. When
translating this to the original variable $h$, by Lemma
\ref{lem.global} all these solutions can be globally extended and a
direct computation shows that $h(0)=-\infty$ or $h(0)=+\infty$. In
the former case, again by the invariances of \eqref{ODE}, $h$ is
globally increasing and has either $+\infty$ or $k\pi$ as a limit
for some $k\in \mathbb Z$ (cases $P_3$ and $P_2$) or
$\frac{(2k+1)\pi}{2}$ for some $k\in\mathbb Z$ (in case the orbit
joins $P_3$ and the origin). The latter case is totally symmetric.
\end{proof}

\subsection{The case $p>2$}\label{subsec.cp2}

In this subsection we prove Theorem \ref{th.3}. We start our study
from Eq. \eqref{ODE5} and the associated autonomous system
\eqref{ODEsyst2}. Analyzing the phase plane, we find that the system
has the same critical points as in Subsection \ref{subsec.cp1}: the
origin in the plane and the six different critical points at
infinity, corresponding to polar angles given in \eqref{critinf3},
more precisely, on the Poincaré sphere, $P_1=(1,7\pi/4)$,
$P_2=(1,\arccos((p-1)/\sqrt{p^2-2p+2}))$ and their symmetric points
$P_2^*=(1,\arccos(-(p-1)/\sqrt{p^2-2p+2}))$ and $P_1^*=(1,3\pi/4)$,
together with the elliptic critical points $P_3=(1,3\pi/2)$ and
$P_3^*=(1,\pi/2)$. By symmetry, we analyze only the lower
half-plane.

The linearization of the system \eqref{ODEsyst2} near the origin has
the same matrix $$ C(0,0)=\left(
         \begin{array}{cc}
           0 \ & 1 \\
           -1 \ & -(2-p) \\
         \end{array}
       \right),
$$
which has two complex eigenvalues with real parts $(p-2)/2>0$. Thus,
the origin is now an unstable node. This is the only difference for
the local analysis with respect to the case $1<p<2$, studied in
Subsection \ref{subsec.cp1}. We recall that the critical points at
infinity, whose analysis is identical, are: $P_1$ and $P_2$ saddle
points of opposite orientation, that means, from $P_1$ there is a
unique orbit going out into the plane and there is a unique orbit
entering $P_2$ from the plane. With all these, we are able to
perform the global analysis of the phase plane.

\medskip

\noindent \textbf{Global analysis of the phase plane.} There is a
unique orbit going out from $P_1$ into the plane. This orbit could
enter the elliptic point $P_3$ or $P_2$. Assume by contradiction
that there exists an orbit connecting $P_1$ and $P_2$. Then, the
orbits going out of the unstable node $O(0,0)$ cannot go to the
elliptic point $P_3$, as they would intersect the orbit connecting
$P_1$ and $P_2$, nor to $P_2$, as $P_2$ admits a unique orbit coming
from the interior of the plane, that would be in our assumption the
one coming from $P_1$. But this means that $O(0,0)$ will remain
isolated, contradiction. Thus, the orbit coming from $P_1$ enters
the elliptic point $P_3$. The orbits starting at $O(0,0)$ will thus
enter $P_3$ too, except one of them which will go to $P_2$. We can
thus classify all the orbits of the system \eqref{ODEsyst2} into the
following four types of connections:

\noindent $\bullet$ A unique orbit coming from $P_1$ and connecting
$P_3$. Going back to $h$, it gives rise to an increasing solution
with $h(0+)=0$, and $h(r_0)=\pi$ at some $r_0>0$. As we shall see
later, this orbit will be extended beyond $r_0$ into a global
solution with $h(r)\to\infty$ as $r\to\infty$.

\noindent $\bullet$ An infinite number of orbits starting at
$O(0,0)$ and entering the elliptic point $P_3$. Since, due to
periodicity, we restrict our analysis to $h(0+)\in[0,\pi)$, going
back to $h$, they represent solutions with $h(0)=\pi/2$. These
solutions may oscillate around $\pi/2$ in some finite interval
$(0,r_0)$ and then increase to $h(r_1)=\pi$ for some $r_1>r_0$ (or
decrease to $h(r_1)=0$ for some $r_1>r_0$).

\noindent $\bullet$ A unique orbit coming from $O(0,0)$ and entering
$P_2$. Going back to $h$, this orbit contains solutions with the
following properties: $h(0)=\pi/2$, $h$ oscillates around $\pi/2$ in
some finite interval $(0,r_0)$ for some $r_0>0$ and then it
increases such that $h(r)\to\pi$ as $r\to\infty$ (and by symmetry,
there are also solutions that decrease with $h(r)\to0$ as
$r\to\infty$, the ones corresponding to the orbit connecting
$O(0,0)$ to $P_2^*$, the symmetric of $P_2$ in the plane).

\noindent $\bullet$ Elliptic orbits going forever around the
elliptic point $P_3$. These orbits will contain solutions with
$h(0+)=-\infty$ and $h(r)\to\infty$ as $r\to\infty$.

The connections described above can be seen better in Figures
\ref{figure3} and \ref{figure4} below.

\medskip

\begin{figure}[ht!]
   % Requires \usepackage{graphicx}
   \begin{center}
   \includegraphics[width=0.95\textwidth]{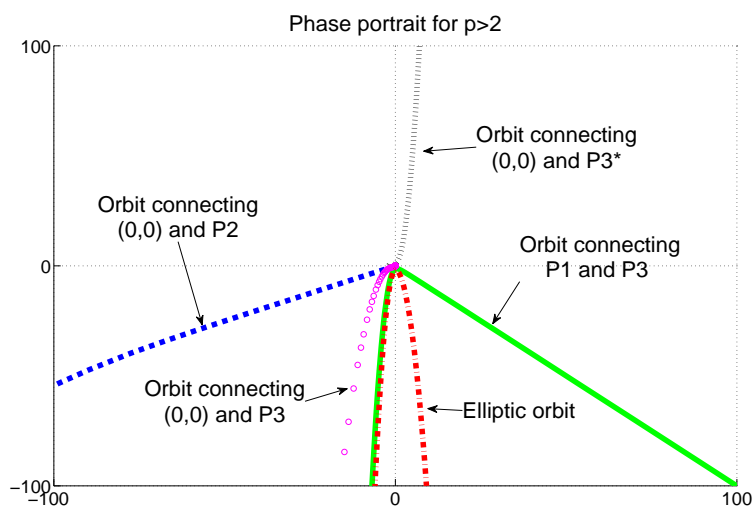}
   \end{center}
   \caption{Global phase portrait of \eqref{ODEsyst2} for $p>2$. Numerical simulation for $p=3$.}\label{figure3}
\end{figure}

\medskip

\begin{figure}[ht!]
   % Requires \usepackage{graphicx}
   \begin{center}
   \includegraphics[width=0.95\textwidth]{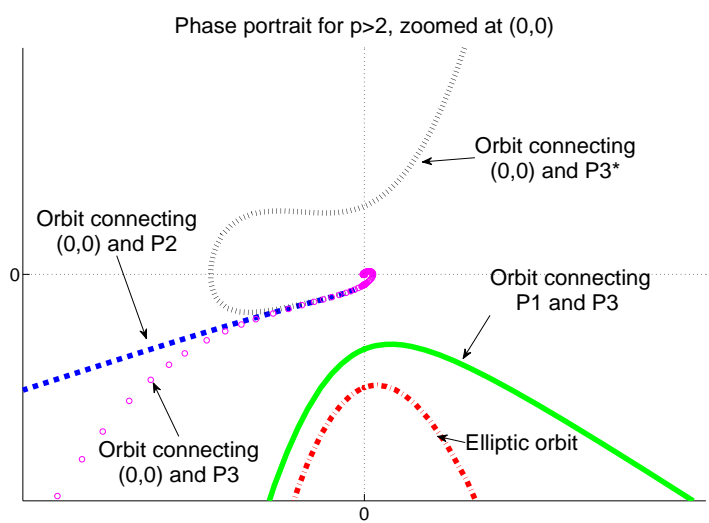}
   \end{center}
   \caption{Phase portrait of \eqref{ODEsyst2} near the origin for $p>2$. Numerical simulation for $p=3$.}\label{figure4}
\end{figure}

\medskip

In order to characterize all global solutions to \eqref{ODE} for
$p>2$, we have to exclude again both that the equation becomes
singular or that there is a blow-up in finite time.

\begin{lemma}\label{lem.global2}
  Let $h$ be a nonconstant solution of \eqref{ODE} in some interval $(r_0,r_1)\subset (0,+\infty)$. Then, \begin{itemize}
    \item[(i)]$ (p-1)h'(r)^2+\frac{\sin^2h(r)}{r^2}>0$ for all $r\in [r_0,r_1]$.
    \item[(ii)] $|h'(r)|\leq C$ for all $r\in [r_0,r_1]$.
    \item[(iii)] $h$ can be globally extended in $(0,+\infty)$ as a solution to \eqref{ODE}.
  \end{itemize}
\end{lemma}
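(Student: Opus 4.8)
The plan is to follow, \emph{mutatis mutandis}, the proof of Lemma~\ref{lem.global}, now using the global phase portrait of Subsection~\ref{subsec.cp2} in place of the one of Subsection~\ref{subsec.cp1}. As there, (iii) is immediate from (i) and (ii) via the standard ODE continuation theorem, since (i) is exactly the statement that the coefficient of $h''$ in \eqref{ODE} never vanishes; so the whole matter reduces to (i) and (ii). Passing to logarithmic coordinates $f(t)=h(e^{t})$ turns \eqref{ODE} into \eqref{ODEexp}, and (i), (ii) become, respectively,
$$f'(t)^{2}+\sin^{2}f(t)>0\qquad\text{and}\qquad |f'(t)|\le C\qquad\text{on }[t_{0},t_{1}].$$
Performing the change $w=\cot f$ one arrives at the system \eqref{ODEsyst2}, whose orbits for $p>2$ have already been classified: the node $O$ is now a source, and the hyperbolic critical points at infinity $P_{1},P_{1}^{*},P_{2},P_{2}^{*}$ are attained only as $t\to\pm\infty$ (the no-blow-up estimate at $P_{2}$ being identical to Step~2 of Subsection~\ref{subsec.cp1}). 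Hence the only way an orbit of \eqref{ODEsyst2} issuing from the finite plane can leave every compact set at a \emph{finite} value of $t$ is by approaching one of the elliptic points $P_{3}$ or $P_{3}^{*}$, i.e. along one of the two vertical directions $\theta\in\{\pi/2,3\pi/2\}$ at infinity. By the symmetry $w\mapsto-w$ we may assume it is $P_{3}$: then there is a point $T$ (either in $(t_{0},t_{1}]$ or the left endpoint $t_{0}$; in the latter case the orbit merely emerges from the source $O$, where $f'$ and $\sin f$ are bounded, so it is harmless) such that $w'(t)\to-\infty$ and $w(t)=o(w'(t))$ as $t\to T^{-}$.

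It remains to control $f$ near such a $T$. Since $\sin f=(1+w^{2})^{-1/2}$ and $|f'|=|w'|\,(1+w^{2})^{-1}$, the approach to the vertical direction, $\sqrt{1+w^{2}}=o(w')$, says exactly that $\sin f=o(f')$; in particular $f'$ does not vanish near $T$, and $f\to k\pi$ for some integer $k$, so $\cos f\to\pm1$. Substituting $\sin f=o(f')$ into \eqref{ODEexp} and noting that the coefficient of $f''$ equals $f'^{2}\bigl((p-1)+o(1)\bigr)$, which is bounded away from zero, a short computation yields
$$f''=\frac{p-2}{p-1}\,f'+o(f')\qquad\text{as }t\to T^{-}.$$
Therefore $\frac{d}{dt}\log|f'(t)|=\frac{f''(t)}{f'(t)}=\frac{p-2}{p-1}+o(1)$ stays bounded on a left neighborhood $[t_{1},T)$ of $T$, and integrating over this \emph{finite} interval shows that $\log|f'|$ remains in a bounded set there; hence $0<c\le|f'(t)|\le C<\infty$ on $[t_{1},T)$. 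The upper bound is (ii); the lower bound contradicts the assumption $f'(t)^{2}+\sin^{2}f(t)\to0$ used in proving (i). This logarithmic-derivative estimate is the one genuine departure from the case $1<p<2$, where (ii) was instead deduced from the concavity of $f$ near $T$: that argument does not survive for $p>2$, since the term $(2-p)f'(f'^{2}+\sin^{2}f)$ on the left of \eqref{ODEexp}, positive near $P_{3}$ when $p<2$, is negative when $p>2$, and for $p>3$ the right-hand side $\sin f\cos f\,((3-p)f'^{2}+\sin^{2}f)$ of \eqref{ODEexp} is positive as well; by contrast the estimate above is valid for every $p>1$ and delivers (i) and (ii) simultaneously.

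The main — indeed the only — obstacle is the phase-plane bookkeeping underlying the previous paragraphs: one must be sure that, for $p>2$, a solution living on a bounded $t$-interval can reach infinity in the $(w,w')$-plane only along the two vertical directions, equivalently that none of the hyperbolic points $P_{1},P_{1}^{*},P_{2},P_{2}^{*}$ is attained in finite time. This is precisely what the global analysis of Subsection~\ref{subsec.cp2}, together with the $P_{2}$ no-blow-up argument (identical to Step~2 of Subsection~\ref{subsec.cp1}), is used to guarantee. Once it is granted, the two displayed estimates give (i) and (ii), and then (iii) follows by continuing the solution across any would-be singularity, exactly as in Lemma~\ref{lem.global}.
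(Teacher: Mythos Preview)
Your argument is correct, and for (ii) and (iii) it coincides with the paper's. The difference is in (i): the paper does \emph{not} reuse the estimate $f''=\frac{p-2}{p-1}f'+o(f')$ to get the lower bound on $|f'|$. Instead it introduces the new variable $g=w'/w^{2}$, writes out the (rather long) expression for $g'$ coming from \eqref{ODE5}, and checks that if $g(t)\to 0^{-}$ while $w(t)\to-\infty$ and $w(t)g(t)=w'(t)/w(t)\to+\infty$, then every term on the right-hand side of that expression is negative, forcing $g$ away from $0$; since $f'(t)=-w'(t)/(1+w^{2}(t))\sim -g(t)$ near $T$, this yields $f'\not\to 0$ and hence (i).

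Your route is shorter and more uniform: once $\sin f=o(f')$ is extracted from the approach to the vertical direction at $P_{3}$, the single relation $(\log|f'|)'=\frac{p-2}{p-1}+o(1)$ on a finite interval gives the two-sided bound $0<c\le|f'|\le C$ and hence (i) and (ii) at once, for every $p>1$. The paper's $g$-argument, by contrast, is specific to $p>2$ (the sign check uses $p-2>0$) and requires writing and inspecting a fairly heavy rational expression. What the paper's approach buys is that it works directly at the level of the $(w,w')$ variables, without reverting to $f$; what yours buys is economy and the observation --- which you make explicit --- that the same estimate already used for (ii) suffices for (i), so no new change of variables is needed.
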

\begin{proof}
 We proceed as in the proof of Lemma \ref{lem.global}.
 First of all, (iii) is implied by (i) and (ii). Next, we pass to logarithmic coordinates by $f(t)=h(e^{t})$
 and to $w=\cot f$, as in Lemma \ref{lem.global}. (ii) is proved as (i) in Lemma \ref{lem.global} with the
 only difference that supposing that $\lim\limits_{t\to T^-} f'(t)=+\infty$ at the elliptic point, we arrive at
 $$f''(t)=\frac{p-2}{p-1}f'(t)+o(f'(t))\,,$$
 which is again a contradiction if $T$ is finite.

 For (i), we note that it is equivalent to prove that $f'$ does not vanish and,
 as in Lemma \ref{lem.global}, it suffices to check it at the elliptic points for the phase portrait of $w$.
 We perform now the following change of variables: $g=\frac{w'}{w^2}$. Then, by \eqref{ODE5}, we have
 $$g'=\frac{w^6g^2((p-2)wg+p-2-(p-1)g^2)}{w(1+w^2)(1+w^2+g^2w^4)}$$$$+\frac{w^4((p-2)g(1+g^2)+(p-5)g^2-1)}{w(1+w^2)(1+w^2+g^2w^4)}$$$$+\frac{2w^2((p-2)wg-1-g^2)+((p-2)wg-1)}{w(1+w^2)(1+w^2+g^2w^4)}.$$
 Noting now that $w(t)\to -\infty$ as $t\to T^-$, $g<0$ in a left neighborhood of $T$, we see that if
 $g(t)\to 0^-$ as $t\to T^-$, then, since $w(t)g(t)=\frac{w'(t)}{w(t)}\to +\infty$ as $t\to T$,
 we get that all terms in the right hand side of the above equation for $g$ are negative.
 Therefore, $g(t)\not \to 0^-$. This yields the conclusion, since $$f'(t)=\frac{-w'(t)}{1+w^2(t)}\sim -g(t)\,,\quad {\rm as \ }t\to T^-.$$
 \end{proof}

Combining Lemma \ref{lem.global2} and the global analysis made
before, we rigorously deduce that the orbits going to the elliptic
point $P_3$ contain indeed global solutions of \eqref{ODE}, as we
already anticipated when doing the analysis.

\medskip

The proof of Theorem \ref{th.3} is analogous to that one of Theorem \ref{th.2} and we skip it.

\medskip

\noindent \textbf{Remark. The special case $p=2$.} The case $p=2$
has been widely studied  with the use of direct methods (see e.g.
the referenced works \cite{BPvdH,BMP}). In this case, there exists a
unique minimizer for the energy and it has (in our notations) the
explicit formula
$$
h(r)=2\arctan\,r.
$$
This particular case can be easily understood with our technique of
phase plane analysis, as it matches perfectly between the pictures
for $1<p<2$ and for $p>2$ respectively. We complete thus the picture
of the evolution of the solutions with respect to the variation of
$p$. We start again from the system \eqref{ODEsyst2} and study the
phase plane. The critical points are the same $O$, $P_1$, $P_2$,
$P_3$, $P_1^*$, $P_2^*$, $P_3^*$ as above, and the local analysis
around the critical points at infinity remains identical. In change,
it can be shown that $O(0,0)$ becomes a center. Thus, passing to the
global analysis of the phase plane associated to the system
\eqref{ODEsyst2}, and restricting our analysis to the lower
half-plane, we notice that:

\noindent $\bullet$ Exactly in this case, the unique orbit going out
from $P_1$ into the plane, enters $P_2$. This orbit contains the
solution $h(r)=2\arctan\,r$.

\noindent $\bullet$ All the rest of orbits are either elliptic
orbits around $P_3$, or orbits that oscillate forever around the
center $O(0,0)$. The latter orbits contain solutions which oscillate
forever around $\pi/2$.

The connections described above can be seen better in Figures
\ref{figure5} and \ref{figure6} below. Indeed, the connection
$P_1-P_2$ is the special and explicit one in this case (and, as we
have seen, it only exists in this case).

\medskip

\begin{figure}[ht!]
   % Requires \usepackage{graphicx}
   \begin{center}
   \includegraphics[width=0.95\textwidth]{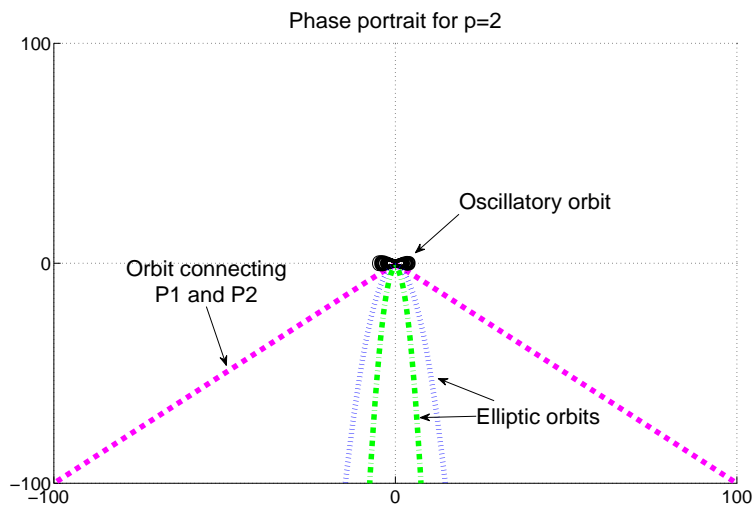}
   \end{center}
   \caption{Global phase portrait of \eqref{ODEsyst2}. Numerical simulation for $p=2$.}\label{figure5}
\end{figure}

\medskip

\begin{figure}[ht!]
   % Requires \usepackage{graphicx}
   \begin{center}
   \includegraphics[width=0.95\textwidth]{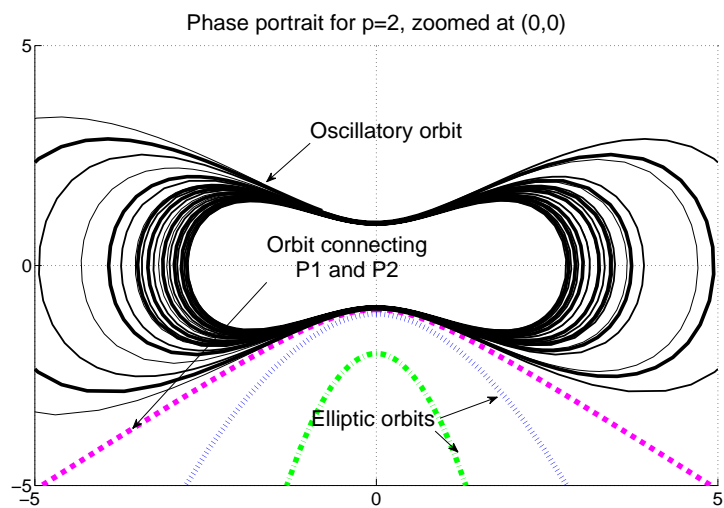}
   \end{center}
   \caption{Phase portrait of \eqref{ODEsyst2} zoomed near the origin for $p=2$.}\label{figure6}
\end{figure}

%\begin{acknowledgements}
%If you'd like to thank anyone, place your comments here
%and remove the percent signs.
%\end{acknowledgements}

% BibTeX users please use
%\bibliographystyle{spmpsci}
%\bibliography{}   % name your BibTeX data base

% Non-BibTeX users please use

\end{document}